\newtheorem{theorem}{Theorem}[section]
\newtheorem{lemma}[theorem]{Lemma}
\newtheorem{proposition}[theorem]{Proposition}
\newtheorem{corollary}[theorem]{Corollary}
\theoremstyle{definition}
\newtheorem{definition}[theorem]{Definition}
\newtheorem{example}[theorem]{Example}
\newtheorem{question}[theorem]{Question}
\newtheorem{remark}[theorem]{Remark}
\theoremstyle{remark}
\numberwithin{equation}{section}
\newcommand{\Z}{\mathbb{Z}}
\newcommand{\N}{\mathbb{N}}
\newcommand{\T}{\mathbb{T}}
\newcommand{\F}{\mathbb{F}}
\newcommand{\C}{\mathbb{C}}
\newcommand{\R}{\mathbb{R}}
\DeclareMathOperator*{\aut}{Aut}
\newcommand{\op}{\operatorname}
\renewcommand\){\textup{)}}
\newcommand\K{\mathcal{K}}
\begin{document}

\title[Simplicity and uniqueness of trace for $C^*_r(G,\sigma)$]{On reduced twisted group C$^*$-algebras that are simple and/or have a unique trace}

\author[B\'edos]{Erik B\'edos}
\address{Department of Mathematics\\University of Oslo\\P.O.Box 1053 Blindern\\NO-0316 Oslo\\Norway}
\email{bedos@math.uio.no}
\author[Omland]{Tron Omland}
\address{Department of Mathematics\\University of Oslo\\P.O.Box 1053 Blindern\\NO-0316 Oslo\\Norway}
\email{trono@math.uio.no}

\subjclass[2010]{46L05 (Primary) 22D25, 46L55 (Secondary)}

\keywords{reduced twisted group C$^*$-algebra, simplicity, unique trace}

\date{June 5, 2017}

\begin{abstract}
We study the problem of determining when the reduced twisted group C$^*$-algebra associated with a discrete group $G$ is simple and/or has a unique tracial state, and present new sufficient conditions for this to hold. One of our main tools is a combinatorial property, that we call the relative Kleppner condition, which ensures that a quotient group $G/H$ acts by freely acting automorphisms on the twisted group von~Neumann algebra associated to a normal subgroup $H$. We apply our results to different types of groups, e.g.\ wreath products and Baumslag-Solitar groups.
\end{abstract}

\maketitle

\section{Introduction}
The theory of twisted group C$^*$-algebras is closely related to projective unitary representations of groups, and we refer to \cite{Pa2008} for a survey describing its importance in various fields of mathematics and physics. In this article, we will only consider discrete groups.
Simplicity and/or uniqueness of the trace for reduced twisted group C$^*$-algebras have been investigated in several papers, e.g.\ \cite{Sla, Pa, PR, Bed, Bed2, Bed4, BC2, BO, Om2}, and our aim with the present work is to provide better insight on this topic.
Finding new examples of simple C$^*$-algebras is always a valuable task, due to the role they play as building blocks and test objects. From the point of view of representation theory, simplicity of the reduced twisted group C$^*$-algebra $C_r^*(G,\sigma)$ gives interesting information as it amounts to the fact that any $\sigma$-projective unitary representation of $G$ which is weakly contained in the (left) regular $\sigma$-projective representation $\lambda_\sigma$ of $G$ is weakly equivalent to $\lambda_\sigma$. The reasoning behind this is essentially the same as the one given in \cite{H} in the untwisted case, i.e., when $\sigma$ is trivial. On the other hand, knowing that $C_r^*(G,\sigma)$ has a unique tracial state $\tau$ is also very useful. This property is a C$^*$-algebraic invariant in itself, which may be refined by taking into account the range of the restriction of $\tau$ to all projections in $C_r^*(G,\sigma)$. When $G$ is countable, this range is a countable subset of the interval $[0,1]$ (see \cite{Rie}), giving a way to label the gaps of the spectrum of self-adjoint elements in $C_r^*(G,\sigma)$.

We will let $G$ denote a group and $\sigma\colon G\times G\to \T$ a normalized $2$-cocycle on $G$ with values in the circle group $\T$, that is, $\sigma \in Z^2(G, \T)$.
We will often use the terminolgy introduced in \cite{BO} and say that the pair $(G,\sigma)$ is \emph{$C^*$-simple} (resp.\ has \emph{the unique trace property}) when the reduced twisted group C$^*$-algebra $C_r^*(G, \sigma)$ is simple (resp.\ has a unique tracial state).
If this holds when $\sigma $ is trivial, we will just say that $G$ is $C^*$-simple (resp.\ has the unique trace property), as in for example \cite{Bed, Bed2, Bed4, H, dHP, TD, OO, KK, BKKO, LB, Haa, Ken}.
We recall that if $(G, \sigma)$ is $C^*$-simple (resp.\ has the unique trace property), then $(G, \sigma)$ satisfies \emph{Kleppner's condition}, that is, every nontrivial $\sigma$-regular conjugacy class in $G$ is infinite (cf.~\cite{Kle} and subsection~\ref{Klep}).
In other words, setting
\begin{align*}
C^*S(G)&= \{\sigma \in Z^2(G,\T)\mid (G,\sigma) \ \text{is $C^*$-simple}\},\\
UT(G)&= \{\sigma \in Z^2(G,\T)\mid (G,\sigma) \ \text{has the unique trace property}\},\\
K(G)&= \{\sigma \in Z^2(G,\T)\mid \sigma \ \text{satisfies Kleppner's condition}\},
\end{align*}
we always have $C^*S(G) \subset K(G)$ and $UT(G) \subset K(G)$.
Following \cite{BO}, we will let $\K_{C^*S}$ (resp.~$\K_{UT}$) denote the class of groups $G$ satisfying $C^*S(G) = K(G)$ (resp.~$UT(G) = K(G)$).
Moreover, $\K$ will denote the intersection of $\K_{C^*S} $ and $\K_{UT}$. Thus, if $G$ belongs to $\K$, then for any $\sigma \in Z^2(G,\T)$, we have that $(G,\sigma)$ is $C^*$-simple if and only if $(G,\sigma)$ has the unique trace property, if and only if $(G,\sigma)$ satisfies Kleppner's condition.

It is noteworthy that the class $\K$ contains many amenable groups. Finite groups, abelian groups, FC-groups and nilpotent groups all lie in $\K$, and more generally, as shown in \cite{BO}, every \emph{FC-hypercentral} group belongs to $\K$ (cf.~subsection~\ref{FC-hyp}). On the other hand, it is known \cite{PR} that some semidirect products of $\Z^n$ by actions of $\Z$ do not belong to $\K_{C^*S}$ (and neither to $\K_{UT}$).
In a somewhat opposite direction, Bryder and Kennedy have recently shown \cite{BK} that $C^*S(G) = Z^2(G,\T)$ (resp.~$UT(G)= Z^2(G,\T)$) whenever $G$ is $C^*$-simple (resp.\ has the unique trace property). Since the class of $C^*$-simple groups is (strictly) contained in the class of groups with the unique trace property \cite{BKKO, LB}, we get that every $C^*$-simple group belongs to $\K$, while every group with the unique trace property belongs to $\K_{UT}$.
Combining results from \cite{BO} and \cite{BK}, we show in the present paper that a group $G$ belongs to $\K_{UT}$ whenever the \emph{FC-hypercenter of $G$} coincides with its amenable radical (cf.~Theorem~\ref{FCH-P}). An interesting question is whether this property in fact characterizes $\K_{UT}$.

When some $\sigma \in Z^2(G,\T)$ is given and it is unclear whether $G$ lies in $\K_{C^*S}$, or in $\K_{UT}$, one would like to be able to decide whether $\sigma$ lies in $C^*S(G)$, or in $UT(G)$. Our main contribution is to provide several new conditions that are sufficient to handle many cases.
As the first step in our approach, we consider a normal subgroup $H$ of $G$ and study when certain naturally arising $*$-automorphisms of the twisted group von~Neumann algebra $M$ associated to $H$ are \emph{freely acting} (or~\emph{properly outer}) in the sense of \cite{Kal}.
This leads us to introduce a combinatorial property for a triple $(G,H, \sigma)$, that we call \emph{the relative Kleppner condition}, which ensures that the canonical twisted action of the quotient group $G/H$ on the von~Neumann algebra $M$ is freely acting. Combining this property with 
some results from \cite{Bed, Bed4} and building on previous works of Kishimoto in \cite{Kis} and Olesen and Pedersen in \cite{OP}, we obtain some conditions that are sufficient for $\sigma$ to belong to $UT(G)$, or to $C^*S(G) \cap UT(G)$.
We illustrate the usefulness of these conditions by applying them to a variety of groups (e.g.\ semidirect products, wreath products, and Baumslag-Solitar groups).

The paper is organized as follows. Section~\ref{prelim} contains a review of the definitions and of the results that are relevant for this article.
In Section~\ref{looking-subgroups} we look at the behavior of $C^*$-simplicity and the unique trace property for pairs $(G,\sigma)$ in a few basic group constructions, in particular in connection with subgroups.
Section~\ref{free} is devoted to freely acting automorphisms and the relative Kleppner condition for triples $(G,H, \sigma)$. 
Our main result is Theorem~\ref{utp}, which relies on some technically involved arguments, in particular in the proof of Proposition~\ref{freeact}. Theorem~\ref{utp} has several consequences; especially, it implies that $C^*$-simplicity and the unique trace property pass from $(H,\sigma_{|H\times H})$ to $(G,\sigma)$ whenever $(G,H, \sigma)$ satisfies the relative Kleppner condition.
Section~\ref{ex} contains a detailed study of several new examples. First we discuss semidirect products of abelian groups by aperiodic automorphisms. Next we look at wreath products, with special focus on $\Z \wr \Z$ and $\Z_2 \wr \Z$, where the former requires investigation of the noncommutative infinite-dimensional torus, and the latter gives rise to a noncommutative version of the lamplighter group. Then we discuss a semidirect product arising from the Sanov action of $\F_2$ on $\Z^2$. Finally, we consider the Baumslag-Solitar groups.

We will often refer to the fact that if $G$ is amenable, or if $G$ is exact and $C_r^*(G,\sigma)$ has stable rank one, then $(G,\sigma)$ is $C^*$-simple whenever it has the unique trace property (cf.~Theorem~\ref{Murph}). For completeness, adapting some previous work of Dykema and de~la~Harpe \cite{DH} for reduced group C$^*$-algebras, we discuss in Appendix~\ref{appendix} some conditions ensuring that $C_r^*(G,\sigma)$ has stable rank one.
In Appendix~\ref{BP} we prove a twisted version of Tucker-Drob's unpublished result in \cite{TD} saying that a group has the unique trace property whenever it has the so-called property~\(BP\). Finally, in Appendix~\ref{decay}, we generalize Gong's recent result in \cite{Gong1} by showing how decay properties of $(G,\sigma)$ can be combined with superpolynomial growth of nontrivial $\sigma$-regular classes to deduce uniqueness of the trace. 

\section{Preliminaries and known results}\label{prelim}

\subsection{\texorpdfstring{$2$}{2}-cocycles}\label{cocy}
Throughout this paper, $G$ will denote a (discrete) group with identity $e$, while $\sigma$ will denote a normalized $2$-cocycle (sometimes called a multiplier) on $G$ with values in the circle group $\T$, as in \cite{ZM}.
This means that we have $\sigma(g, e) = \sigma(e, g) = 1$ for every $g \in G$ and that the cocycle identity
\begin{equation}\label{coceq}
\sigma(g, h)\sigma(gh, k) = \sigma(h, k) \sigma(g, hk)
\end{equation}
holds for every $g, h, k \in G$.

The set $Z^2(G,\T)$ of all normalized $2$-cocycles becomes an abelian group under pointwise product, the inverse operation corresponding to conjugation, i.e., $\sigma ^{-1} = \overline{\sigma}$, where $\overline{\sigma}(g, h) = \overline{\sigma(g, h)}$, and the identity element being the trivial $2$-cocycle $1$ on $G$.

An element $\beta \in Z^2(G,\T)$ is called a \emph{coboundary} whenever we have
\[
\beta(g,h)= b(g)b(h)\overline{b(gh)}
\]
for all $g,h\in G$, for some $b\colon G \to \T$ such that $b(e)=1$ (such a function $b$ is uniquely determined up to multiplication by a character of $G$).
The set of all coboundaries $B^2(G,\T)$ is a subgroup of $Z^2(G,\T)$,
and elements in the quotient group $H^2(G,\T)=Z^2(G,\T)/B^2(G,\T)$ will be denoted by $[\sigma]$.

For $\sigma, \omega \in Z^2(G,\T)$, we write $\sigma \sim \omega$ and say that $\sigma$ is \emph{similar} (or \emph{cohomologous}) to $\omega$ when $[\sigma] = [\omega]$ in $H^2(G,\T)$.

\subsection{Twisted group algebras}
The \emph{left regular $\sigma$-projective unitary representation $\lambda_{\sigma}$ of $G$ on $\mathcal{B}(\ell^2(G))$} is given by
\[
\big(\lambda_{\sigma}(g)\xi\big)(h)=\sigma(g,g^{-1}h)\, \xi(g^{-1}h)
\]
for $g,h \in G$ and $\xi \in \ell^2(G)$.
Note that we have
\begin{align*}
\lambda_{\sigma}(g)\, \delta_h&=\sigma(g,h)\, \delta_{gh}\,,\\
\lambda_{\sigma}(g)\, \lambda_{\sigma}(h)&=\sigma(g,h)\, \lambda_{\sigma}(gh)
\end{align*}
for all $g,h\in G$, where $\delta_h(g)=1$ if $g=h$ and $\delta_h(g)=0$ otherwise.
It follows that for all $g, h \in G$ we have
\begin{equation*}
\lambda_{\sigma}(g)\lambda_{\sigma}(h)\lambda_{\sigma}(g)^*= \sigma(g,h)\,\overline{\sigma(ghg^{-1},g)} \,\lambda_{\sigma}(ghg^{-1})\,.
\end{equation*}
We will use the notation $g\cdot h := ghg^{-1}$ to denote the action of $G$ on itself by conjugation.
Letting $\widetilde{\sigma}\colon G\times G \to \C $ denote the anti-symmetrized form of $\sigma$ defined by 
\begin{equation}\label{t-sigma}
\widetilde{\sigma}(g,h) =\sigma(g,h)\,\overline{\sigma(g\cdot h,g)}\,,
\end{equation}
we get
\begin{equation}\label{sigma-conjugate}
\lambda_{\sigma}(g)\lambda_{\sigma}(h)\lambda_{\sigma}(g)^*= \widetilde\sigma(g,h) \,\lambda_{\sigma}(g\cdot h)
\end{equation}
for all $g, h \in G$.

The \emph{reduced twisted group C$^*$-algebra} $C^*_r(G,\sigma)$ and the \emph{twisted group von~Neumann algebra} $W^*(G,\sigma)$ are, respectively, the C$^*$-algebra and the von~Neumann algebra generated by $\lambda_{\sigma}(G)$. We will use the convention that when $\sigma$ is the trivial cocycle, we just drop $\sigma$ from all our notation. It is well-known and easy to check that $C^*_r(G,\sigma)\simeq C^*_r(G,\omega)$ (resp.~$W^*(G,\sigma)\simeq W^*(G,\omega))$ whenever $\sigma \sim \omega$ in $Z^2(G, \T)$.

We will denote by $\Lambda_\sigma$ the map from $\ell^1(G)$ into $\mathcal{B}(\ell^2(G))$ given by
\[
\Lambda_\sigma(f) = \sum_{g\in G} \,f(g)\, \lambda_\sigma(g)
\]
for $f \in \ell^1(G)$.
Note that for $f \in \ell^1(G)$ and $\xi \in \ell^2(G)$, we have $\Lambda_\sigma(f) \, \xi = f \ast_\sigma \xi$, where
\[
(f \ast_\sigma \xi)(h) = \sum_{g\in G}\, f(g) \, \xi(g^{-1}h) \sigma(g, g^{-1}h)
\]
for each $h\in G$.

The canonical tracial state on $W^*(G,\sigma)$ will be denoted by $\tau$ (or by $\tau_\sigma$ if confusion may arise); it is given as the restriction to $W^*(G,\sigma)$ of the vector state associated with $\delta_e$. As is well-known, $\tau$ is faithful and satisfies $\tau(\lambda_\sigma(g)) = 0$ for every $g\neq e$. The restriction of $\tau$ to $C^*_r(G,\sigma)$ will also be denoted by $\tau$ (or by $\tau_\sigma$).

Note that one can also consider the right regular $\sigma$-projective unitary representation $\rho_{\sigma}$ of $G$ on $\mathcal{B}(\ell^2(G))$ given by
\[
(\rho_{\sigma}(g)\xi)(h)=\sigma(h,g)\,\xi(hg)
\]
for all $\xi\in\ell^2(G)$ and $g,h \in G$.
One easily checks (see e.g.\ \cite[Section~2]{Om}) that for every $g, h \in G$ we have
\[
\lambda_\sigma(g) \, \rho_{\overline{\sigma}}(h) = \rho_{\overline{\sigma}}(h)\, \lambda_\sigma(g)\,.
\]
We will say that $(G,\sigma)$ is \emph{$C^*$-simple} (resp.\ has \emph{the unique trace property}) whenever $C_r^*(G,\sigma)$ is simple (resp.\ $\tau$ is the only tracial state of $C_r^*(G,\sigma)$).

\subsection{Kleppner's condition}\label{Klep}

We recall \cite{Kle, Pa, Om} that $g \in G$ is called \emph{$\sigma$-regular} if
\[
\sigma(h,g)=\sigma(g,h) \, \, \text{whenever} \, \, h \in G\, \, \text{commutes with}\, \, g\,.
\]
If $g$ is $\sigma$-regular, then $kgk^{-1}$ is $\sigma$-regular for all $k$ in $G$,
so the notion of $\sigma$-regularity makes sense for conjugacy classes in $G$.

Following \cite{Om}, we will say that the pair $(G,\sigma)$ satisfies \emph{Kleppner's condition} (or~\emph{condition~K}) if every nontrivial $\sigma$-regular conjugacy class of $G$ is infinite. It is known \cite{Kle, Pa, Om} that $(G,\sigma)$ satisfies Kleppner's condition if and only if $W^*(G,\sigma)$ is a factor, if and only if $C^*_r(G,\sigma)$ has trivial center, if and only if $C^*_r(G,\sigma)$ is prime.

It follows easily from the above equivalences that Kleppner's condition is necessary for $(G,\sigma)$ to be $C^*$-simple (resp.\ to have a unique trace). However, in general, Kleppner's condition is not sufficient for any of these two properties to hold. For instance, if $G$ is a nontrivial amenable group which is ICC (i.e., every nontrivial conjugacy class in $G$ is infinite), then $(G,1)$ satisfies Kleppner's condition, but $(G,1)$ is neither $C^*$-simple, nor has a unique tracial state (since there exists a nontrivial homomorphism $\epsilon\colon C_r^*(G)\to \C$ whenever $G$ is amenable, cf.~\cite{BrOz}).

Recall from the introduction that 
\begin{align*}
C^*S(G)&= \{\sigma \in Z^2(G,\T)\mid (G,\sigma) \ \text{is $C^*$-simple}\}\,,\\
UT(G)&= \{\sigma \in Z^2(G,\T)\mid (G,\sigma) \ \text{has the unique trace property}\}\,,\\
K(G)&= \{\sigma \in Z^2(G,\T)\mid \sigma \ \text{satisfies Kleppner's condition}\}\,.
\end{align*}
We then have
\[
C^*S(G) \subset K(G)\quad \text{and} \quad UT(G) \subset K(G)\,.
\]
It is straightforward to see that if $\sigma$ lies in $C^*S(G)$ (resp.~$UT(G)$) and $\omega\in Z^2(G,\T)$ is similar to $\sigma$, then $\omega$ also lies in $C^*S(G)$ (resp.~$UT(G)$). Hence, it follows from \cite{BKKO} that if $\sigma \sim 1$ and $\sigma \in C^*S(G)$, then $\sigma \in UT(G)$. We do not know whether this implication holds when $\sigma \not\sim 1$.
Note that it may happen that $K(G)$ is empty, in which case $C^*S(G)$ and $UT(G)$ are also empty.
For example, suppose that $G$ is finite and that there exists some $\sigma \in K(G)$.
Then $W^*(G,\sigma)$ is a finite-dimensional factor having a basis indexed by $G$.
So $\lvert G \rvert$, the cardinality of $G$, has to be a square number.
Thus, $K(G) = \emptyset $ whenever $G$ is finite and $\lvert G \rvert$ is not a square number.
We also note that $K(\Z)=\emptyset$, as readily follows from the fact that $H^2(\Z, \T)$ is trivial.
Another fact which is almost immediate is that $G$ is ICC if and only if $K(G) = Z^2(G,\T)$.

We will say that $G$ belongs to the class $\K_{C^*S}$ if $C^*S(G) = K(G)$, and that $G$ belongs to the class $\K_{UT}$ if $UT(G) = K(G)$.
Moreover, $\K$ will denote the intersection of $\K_{C^*S} $ and $\K_{UT}$.

Finally, we mention that it follows from \cite{HZ} that $\sigma\in C^*S(G)\cap UT(G)$ if and only if $C^*_r(G,\sigma)$ has the Dixmier property relative to $\C\cdot 1$, if and only if $\sigma\in K(G)$ and $C^*_r(G,\sigma)$ has the Dixmier property relative to its center (as defined for example in \cite[III.2.5.16]{Bla}).

\subsection{Murphy's theorem}
A useful consequence of a result due to Murphy \cite{Mur} is the following theorem (cf.\ Corollaries~2.3 and~2.4 in \cite{BO}):
\begin{theorem}\label{Murph}
Assume that $G$ is amenable, or that $G$ is exact and $A=C^*_r(G, \sigma)$ has stable rank one \(i.e., the invertible elements of $A$ are dense in $A$\). Then $(G,\sigma)$ is $C^*$-simple whenever it has the unique trace property.
\end{theorem}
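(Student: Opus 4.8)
The plan is to deduce both statements from a single general fact, essentially the form in which Murphy's result \cite{Mur} is used in \cite{BO}: if a unital C*-algebra $A$ carries a faithful tracial state $\tau$ which is moreover its \emph{only} tracial state, and if every quotient of $A$ by a proper closed two-sided ideal admits a tracial state, then $A$ is simple. This is immediate: for a proper closed ideal $I$, a tracial state of $A/I$ composed with the quotient map is a tracial state of $A$ vanishing on $I$, hence equals $\tau$ by uniqueness, hence $I=0$ by faithfulness; as $A\ne 0$, simplicity follows. Since the canonical trace $\tau_\sigma$ on $A=C^*_r(G,\sigma)$ is always faithful, and the unique trace property means precisely that $\tau_\sigma$ is the only tracial state of $A$, in each of the two cases it suffices to produce a tracial state on every proper quotient $B=A/I$ of $A$.

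For the amenable case, I would proceed as follows. Let $\pi\colon A\to B=A/I\neq 0$ be the quotient map and put $v_g=\pi(\lambda_\sigma(g))$, so that $v_gv_h=\sigma(g,h)\,v_{gh}$. The scalar ambiguity in $v_g$ does not affect $\op{Ad}(v_g)$, so $g\mapsto\alpha_g:=\op{Ad}(v_g)$ is a genuine (non-projective) action of $G$ on $B$ by $*$-automorphisms, with $\alpha_g(v_h)=\widetilde\sigma(g,h)\,v_{g\cdot h}$ by \eqref{sigma-conjugate}. Since $G$ is amenable, averaging an arbitrary state of $B$ against a left-invariant mean on $\ell^\infty(G)$ yields an $\alpha$-invariant state $\phi$ on $B$. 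It then remains to check that any $\alpha$-invariant state on $B$ is automatically tracial, which boils down to $\phi(v_gv_h)=\phi(v_hv_g)$ for all $g,h$; applying $\alpha$-invariance of $\phi$ with $\alpha_h$ (which carries $v_{gh}$ to $\widetilde\sigma(h,gh)\,v_{hg}$ since $h\cdot(gh)=hg$) reduces this to the identity $\sigma(h,g)\sigma(hg,h)=\sigma(g,h)\sigma(h,gh)$, which is the cocycle identity \eqref{coceq} with $(g,h,k)$ replaced by $(h,g,h)$. Thus $\phi$ is a tracial state on $B$, by linearity and continuity.

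For the case where $G$ is exact and $A=C^*_r(G,\sigma)$ has stable rank one, I would instead use inheritance of structure by quotients. Given a proper closed ideal $I$ with $B=A/I\neq 0$, the algebra $B$ is unital, it is exact because exactness passes to quotients, and it has stable rank one because the stable rank does not increase under quotients (Rieffel); in particular $B$ and all of its matrix amplifications are finite, so $B$ is stably finite. It is then known that $B$ admits a tracial state: combine the existence of a quasitrace on any unital stably finite C*-algebra with Haagerup's theorem that quasitraces on exact C*-algebras are traces. Hence again every proper quotient of $A$ has a tracial state, and the general fact above applies.

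The main obstacle I anticipate is the traciality step in the amenable case: extracting, from $\alpha$-invariance of the averaged state and the $2$-cocycle identity alone, the symmetry $\phi(v_gv_h)=\phi(v_hv_g)$ on the generators. The remaining ingredients are either formal (the general principle, pulling back traces along quotients) or standard citations (amenability gives invariant means; exactness and stable rank one pass to quotients; unital exact stably finite C*-algebras have tracial states).
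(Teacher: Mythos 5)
Your proof is correct and follows essentially the route the paper relies on: the theorem is stated there without proof, as a consequence of Murphy's result via \cite[Corollaries~2.3 and~2.4]{BO}, and your argument (reduce simplicity to the existence of traces on proper quotients, produce these by averaging over an invariant mean in the amenable case and by quasitraces plus Haagerup's theorem in the exact, stable-rank-one case) is a faithful reconstruction of that cited argument, including the correct cocycle computation $\sigma(h,g)\sigma(hg,h)=\sigma(g,h)\sigma(h,gh)$. The only step you leave implicit is that exactness of $G$ gives exactness of $A=C^*_r(G,\sigma)$ itself (so that its quotients are exact), which is standard but worth a one-line citation.
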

This result implies that if $G$ is amenable, then $UT(G) \subset C^*S(G)$.
Hence, an amenable group belongs to $\K$ if and only it belongs to $\K_{UT}$. When $G$ is a countable and amenable, and $(G,\sigma)$ has the unique trace property, one can conclude from Theorem~\ref{Murph} that $C_r^*(G, \sigma)$ is a separable, simple, nuclear C$^*$-algebra with a unique tracial state, hence belongs to a class of C$^*$-algebras being currently under intensive study.

Concerning exactness of groups, the reader may consult \cite{BrOz} and references therein. When $\sigma \not \sim 1$, there are few known examples of pairs $(G, \sigma)$ such that $C^*_r(G, \sigma)$ has stable rank one. Putnam's result \cite{Put} for irrational rotation algebras deals with the case where $G= \Z^2$ (after rewriting rotation algebras as a twisted group C$^*$-algebras associated to $\Z^2$). His result was generalized to $G=\Z^n$ for any $n\geq 2$ by Blackadar, Kumjian and R{\o}rdam \cite{BKR}, but one should note that they effectively use simplicity to deduce stable rank one.

\begin{question}\label{simp-sr1}
Suppose $G$ is exact, $\sigma \in Z^2(G,\T)$ and consider the following statements:
\begin{itemize}\itemsep2pt
\item[(i)] $(G,\sigma)$ is $C^*$-simple.
\item[(ii)] $C_r^*(G,\sigma)$ has stable rank one and $(G,\sigma)$ has the unique trace property.
\end{itemize}
Theorem~\ref{Murph} gives that (ii)~$\Rightarrow$~(i). Does (i)~$\Rightarrow$~(ii) always hold ?

\medskip

If $\sigma \sim 1$, thanks to \cite{BKKO}, this question reduces to asking whether $C_r^*(G)$ has stable rank one whenever $G$ is $C^*$-simple (and $G$ is exact). More generally, one may wonder if $C_r^*(G,\sigma)$ has stable rank one whenever $(G,\sigma)$ is $C^*$-simple.
\end{question}
Adapting the approach used in \cite{DH}, where several groups whose reduced group C$^*$-algebras have stable rank one are presented, we discuss in Appendix~\ref{appendix} of this paper some conditions ensuring that $C_r^*(G,\sigma)$ has stable rank one.
\subsection{FC-hypercentral groups}\label{FC-hyp} 
 It is known that a group $G$ has a smallest normal subgroup that produces an ICC quotient group (cf.~\cite[Remark~4.1]{Jaw} and \cite[Proposition~2.5]{BO}). This subgroup coincides with the so-called \emph{FC-hypercenter} \cite{Robinson} of $G$ and is denoted by $FCH(G)$. Clearly, $FCH(G)=\{e\}$ if and only if $G$ is ICC. Letting $Z(G)$ denote the center of $G$ and $FC(G)$ the FC-center of $G$ (that is, the (normal) subgroup of $G$ consisting of all elements of $G$ having a finite conjugacy class in $G$), we have
\[
Z(G) \subset FC(G) \subset FCH(G).
\]
When $G= FCH(G)$, $G$ is said to be \emph{FC-hypercentral}.
Every FC-hypercentral group is amenable \cite{Ech}.
It follows that the FC-hypercenter of a group $G$ is amenable, so we have
\[
FCH(G) \subset AR(G),
\]
where $AR(G)$ denotes the \emph{amenable radical} of $G$, that is, the largest normal amenable subgroup of $G$.
Alternatively, one may deduce this inclusion by observing that $G/AR(G)$ has no normal amenable subgroup other than the trivial one, hence is ICC.

\begin{theorem}[\cite{BO}]\label{FCH}
Assume that $G$ is FC-hypercentral. Then $G$ belongs to $\K$.
\end{theorem}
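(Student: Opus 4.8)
The plan is to reduce the statement to a single assertion about traces and then establish it by transfinite induction along the upper FC-central series of $G$. \emph{Step 1 (reduction).} Since every FC-hypercentral group is amenable, Theorem~\ref{Murph} gives $UT(G)\subseteq C^*S(G)$; together with the general inclusions $C^*S(G)\subseteq K(G)$ and $UT(G)\subseteq K(G)$, it therefore suffices to prove the reverse inclusion $K(G)\subseteq UT(G)$, since this forces $K(G)\subseteq UT(G)\subseteq C^*S(G)\subseteq K(G)$, so that all three sets coincide and $G\in\K$. Fix then $\sigma\in K(G)$ and a tracial state $\phi$ on $C^*_r(G,\sigma)$. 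Because $\phi$ and the canonical trace $\tau$ are norm-continuous and linear, and $\lambda_\sigma(G)$ spans a dense subalgebra, it is enough to show that $\phi(\lambda_\sigma(g))=0$ for every $g\neq e$.

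\emph{Step 2 (the induction, and the finite-class case).} Recall the upper FC-central series $FC_0(G)=\{e\}$, $FC_{\alpha+1}(G)/FC_\alpha(G)=FC\big(G/FC_\alpha(G)\big)$, with unions at limit stages; since $G$ is FC-hypercentral this series reaches $G$, say $G=FC_\lambda(G)$. I would prove by transfinite induction on $\alpha$ that $\phi(\lambda_\sigma(g))=0$ for all $g\in FC_\alpha(G)\setminus\{e\}$; limit stages are automatic, so the work is at a successor $\alpha=\beta+1$. Put $H=FC_\beta(G)$ and let $g\in FC_{\beta+1}(G)\setminus\{e\}$; we may assume $g\notin H$ (else apply the inductive hypothesis), and we distinguish according to the size of the $G$-conjugacy class of $g$. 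If it is finite, then $g\in FC(G)=FC_1(G)$, which forces $\beta=0$ (as $g\notin H$ while $FC_1(G)\subseteq FC_\beta(G)$ for $\beta\geq1$); in this case Kleppner's condition says that $g$ is not $\sigma$-regular, so there is an $h\in G$ commuting with $g$ with $\widetilde\sigma(h,g)\neq1$, and applying the trace $\phi$ to the identity $\lambda_\sigma(h)\lambda_\sigma(g)\lambda_\sigma(h)^*=\widetilde\sigma(h,g)\lambda_\sigma(g)$ (a special case of \eqref{sigma-conjugate}, using $h\cdot g=g$) immediately gives $\phi(\lambda_\sigma(g))=0$.

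\emph{Step 3 (the main case).} Now suppose the $G$-conjugacy class of $g$ is infinite; then $\beta\geq1$ (if $\beta=0$ then $g\in FC_1(G)$ has finite conjugacy class), so by the inductive hypothesis $\phi(\lambda_\sigma(h))=0$ for every $h\in H\setminus\{e\}$, and a routine computation with the cocycle identity then shows that $\{\pi_\phi(\lambda_\sigma(h))\xi : h\in H\}$ is an orthonormal family in the GNS space $\H_\phi$, where $\xi$ is the cyclic tracial unit vector. Since $gH$ lies in $FC(G/H)$, all conjugates of $g$ lie in finitely many cosets $g_1H,\dots,g_nH$ of $H$; as there are infinitely many conjugates, by the pigeonhole principle some coset $g_iH$ contains infinitely many of them, say $g_ih_j$ with the $h_j\in H$ pairwise distinct. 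Using $\lambda_\sigma(s)\lambda_\sigma(g)\lambda_\sigma(s)^*=\widetilde\sigma(s,g)\lambda_\sigma(s\cdot g)$, the fact that $\widetilde\sigma$ is $\T$-valued, and the trace property, each $g_ih_j$ being conjugate to $g$ gives $|\phi(\lambda_\sigma(g_ih_j))|=|\phi(\lambda_\sigma(g))|=:\varepsilon$. Writing $\lambda_\sigma(g_ih_j)=\overline{\sigma(g_i,h_j)}\,\lambda_\sigma(g_i)\lambda_\sigma(h_j)$ one obtains $\phi(\lambda_\sigma(g_ih_j))=\overline{\sigma(g_i,h_j)}\,\langle\pi_\phi(\lambda_\sigma(h_j))\xi,\,\eta_i\rangle$ with $\eta_i=\pi_\phi(\lambda_\sigma(g_i))^*\xi$ a unit vector, so Bessel's inequality against the orthonormal family yields $\sum_j\varepsilon^2\leq\|\eta_i\|^2=1$; since infinitely many $j$ occur, $\varepsilon=0$, i.e.\ $\phi(\lambda_\sigma(g))=0$. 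This closes the induction, so $\phi=\tau$, hence $\sigma\in UT(G)$; by Step~1, $G\in\K$.

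\emph{Main obstacle.} Steps~1 and~2 are routine, and the transfinite bookkeeping is harmless. The crux is Step~3: one must set up the pigeonhole so that both halves of Kleppner's condition are used in tandem --- infiniteness of the relevant conjugacy class to produce infinitely many $h_j$, and the inductive hypothesis (which is exactly what makes the vectors $\pi_\phi(\lambda_\sigma(h))\xi$ orthonormal, not merely unit vectors) to run Bessel's inequality --- while keeping careful track of the cocycle phases, all of modulus one, so that they drop out of the modulus estimates.
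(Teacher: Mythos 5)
Your argument is correct, but note that the paper itself offers no proof of Theorem~\ref{FCH}: it is imported verbatim from \cite{BO} (where it appears as Theorem~3.1), so there is no in-paper argument to match yours against. What you give is a self-contained and more elementary route. The strategy hinted at in the paper (see the discussion of ``condition~X'', which describes the \cite{BO} proof as an induction with $FC(G)$ as base case) passes through crossed-product decompositions over the terms of the FC-central series and invokes unique-trace results for twisted crossed products; your proof instead runs the transfinite induction directly at the level of a single tracial state $\phi$, killing $\phi(\lambda_\sigma(g))$ by hand. I checked the details: the reduction in Step~1 is right (amenability of FC-hypercentral groups plus Theorem~\ref{Murph} makes $K(G)\subseteq UT(G)$ sufficient); the case split in Step~2 is exhaustive, since a finite $G$-class forces $g\in FC(G)=FC_1(G)$ and hence $\beta=0$, where Kleppner's condition supplies the non-$\sigma$-regularity needed for the phase trick; and in Step~3 the normality of $H=FC_\beta(G)$ guarantees the conjugates of $g$ fall into the finitely many cosets covering the $G/H$-class of $gH$, the inductive hypothesis is exactly what makes $\{\pi_\phi(\lambda_\sigma(h))\xi\}_{h\in H}$ orthonormal (the off-diagonal inner products are unimodular multiples of $\phi(\lambda_\sigma(h_2^{-1}h_1))$ with $h_2^{-1}h_1\in H\setminus\{e\}$), and Bessel's inequality against infinitely many $j$ forces $\varepsilon=0$. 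What your approach buys is transparency and economy of machinery --- no conditional expectations, no twisted crossed products, no appeal to freely acting automorphisms --- at the cost of not producing the intermediate structural statements (e.g.\ about $K$-invariant traces on $C^*_r(FC(G),\sigma')$) that \cite{BO} and the present paper reuse elsewhere.
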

We do not know of any amenable group that belongs to $\K$ without being FC-hypercentral.

\subsection{\texorpdfstring{$C^*$}{C*}-simple groups and groups with the unique trace property}
We refer to \cite{H} for a thorough introduction to $C^*$-simple groups and groups with the unique trace property. Among the many recent articles 
dealing with such groups, we mention \cite{dHP, TD, KK, BC2, OO, Gong1, Gong2, BKKO, LB, Ken, Haa, BK, LBB, Iva-Om}. As already pointed out in the introduction, it is now known from \cite{BKKO, LB} that the class of $C^*$-simple groups is strictly contained in the class of groups with the unique trace property. Another interesting result from \cite{BKKO} is that a group has the unique trace property if and only if its amenable radical is trivial. Moreover, if $G$ is $C^*$-simple (resp.~has the unique trace property), then $(G,\sigma)$ is $C^*$-simple (resp.~has the unique trace property) for every $\sigma \in Z^2(G,\T)$, as shown in \cite{BK}. It follows that the class of $C^*$-simple groups is (strictly) contained in $\K$ and that the class of groups with the unique trace property is (strictly) contained in $\K_{UT}$.

A very large family of groups with the unique trace property is the class of groups having the property~\(BP\) introduced
in \cite{TD}. As the proof of this fact, which relies on some arguments from \cite{BCH}, is only very briefly sketched in \cite[Remark~5.9]{TD}, we prove in Appendix \ref{BP} that $(G,\sigma)$ has the unique trace property whenever $G$ has property~\(BP\).

In \cite{Iva-Om}, the authors consider (nondegenerate) free products of groups with amalgamation. They give (in \cite[Section~4]{Iva-Om}) an example of such a group $\Gamma=G_0\ast_HG_1$ which is not $C^*$-simple, but is a so-called weak$^*$~Powers group, hence has property~\(BP\) (cf.~\cite[Theorem~5.4]{TD}). In particular, $\Gamma$ has the unique trace property. Moreover, as $G_0$ and $G_1$ are easily seen to be amenable, hence exact, $\Gamma$ is also exact (cf.~\cite{Dyk}). It therefore follows from Theorem~\ref{Murph} that $C_r^*(\Gamma)$ does not have stable rank one.

In another direction, Gong has recently shown in \cite[Theorem~3.11]{Gong1} that if a group $G$ has property RD \cite{Jolissaint} with respect to some length function $L$, and every nontrivial conjugacy class of $G$ has superpolynomial growth (w.r.t.~$L$), then $G$ has the unique trace property. This result applies for example when $G$ is a torsion-free, non-elementary, Gromov hyperbolic group, see \cite{Gong1, Gong2}. Such groups are in fact well-known to be $C^*$-simple, cf.~\cite{H}. We show in Appendix \ref{decay} how Gong's result may be generalized by considering suitable decay properties for a pair $(G,\sigma)$ in combination with superpolynomial growth of $\sigma$-regular classes.

\section{Looking at subgroups}\label{looking-subgroups}

\subsection{Subgroups and normal subgroups}\label{subgroups}

Let $H$ be a subgroup of $G$ and let $\sigma'$ denote the restriction of $\sigma$ to $H\times H$. We will denote the canonical tracial state on $W^*(H, \sigma')$ (resp.~$C_r^*(H, \sigma')$) by $\tau'$. It follows from \cite[subsection~4.26]{ZM} that there is a natural embedding of $W^*(H, \sigma')$ (resp.~$C_r^*(H, \sigma')$) into $W^*(G, \sigma)$ (resp.~$C^*_r(G, \sigma)$), sending $\lambda_{\sigma'}(h)$ to $\lambda_\sigma(h)$ for each $h \in H$.

We will usually identify $W^*(H, \sigma')$ (resp.~$C_r^*(H, \sigma')$) with its canonical copy inside $W^*(G, \sigma)$ (resp.~$C^*_r(G, \sigma)$). We note that there exists a normal conditional expectation $\mathcal{E}$ from $W^*(G, \sigma)$ onto $W^*(H, \sigma')$, satisfying $\mathcal{E}(\lambda_\sigma(g)) = \lambda_\sigma(g) $ if $g\in H$, and $\mathcal{E}(\lambda_\sigma(g)) = 0$ otherwise. Indeed, since the characteristic function of $H$ in $G$ is positive definite, the existence of a normal completely positive map $\mathcal{E}$ with this property follows for example from \cite{BC} (see Proposition~4.2 and Corollary~4.4 therein). It is then straightforward to check that this map is a conditional expectation. We will also use that the restriction of $\mathcal{E}$ to $C^*_r(G, \sigma)$ gives a conditional expectation $E$ from $C^*_r(G, \sigma)$ onto $C_r^*(H, \sigma')$.

When $H$ is a normal subgroup of $G$, the relationship between $W^*(G, \sigma)$ and $W^*(H, \sigma')$ (resp.\ between $C^*_r(G, \sigma)$ and $C^*_r(H, \sigma')$), may be described as follows, cf.~\cite{Bedos} (resp.~\cite{Bed}). 
First we note that equation \eqref{sigma-conjugate} implies that for each $g \in G$, the inner automorphism of $W^*(G, \sigma)$ (resp.~$C^*_r(G, \sigma)$) implemented by the unitary $\lambda_{\sigma}(g)$ restricts to a $*$-automorphism $\gamma_g$ of $W^*(H,\sigma')$ (resp.~$C^*_r(H, \sigma')$) satisfying
\[
\gamma_g\big(\lambda_{\sigma'}(h)\big)=\widetilde\sigma(g,h)\,\lambda_{\sigma'}(g\cdot h) \quad \text{ for each } h \in H\,.
\]
Let $q$ denote the canonical homomorphism from $G$ onto $K:=G/H$, let $s\colon K\to G$ be a section for $q$ satisfying $s(e)=e$,
and define $m\colon K\times K \to H$ by
\[
m(k,l)=s(k)s(l)s(kl)^{-1}\,.
\]
Moreover, define $\beta\colon K\to \aut(W^*(H, \sigma'))$ (resp.~$\aut(C^*_r(H, \sigma'))$) by
\[
\beta_k = \gamma_{s(k)} \quad \text{for each } \, k \in K,
\]
and $\omega\colon K \times K\to {\mathcal U}\big(C^*_r(H, \sigma')\big) \subset \,{\mathcal U}\big(W^*(H, \sigma')\big)$ by
\[
\omega(k,l) = \sigma\big(s(k),s(l)\big)\, \overline{\sigma\big(m(k,l),s(kl)\big)}\, \lambda_{\sigma'}(m(k,l))
\]
for each $k, l \in K$.
Then $(\beta,\omega)$ is a twisted action (sometimes called a cocycle crossed action) of $K$ on $W^*(H, \sigma')$ (resp.~$C^*_r(H, \sigma')$) such that 
\begin{align*}
W^*(G,\sigma) \, &\simeq \, W^*(H,\sigma')\rtimes_{(\beta,\omega)} K \\
\text{(resp.} \quad C^*_r(G,\sigma) \, &\simeq \, C^*_r\big(C^*_r(H,\sigma'),K,\beta,\omega\big))\,,
\end{align*}
cf.~\cite[Theorem~1]{Bedos} (resp.~\cite[Theorem~2.1]{Bed}). It should be noted that a similar decomposition result was first established for full twisted group C$^*$-algebras and full twisted crossed products by Packer and Raeburn in \cite[Theorem~4.1]{PR}.

When there is danger of confusion, we will denote each $\beta_k$ by $\beta^r_k$ when we consider it as a $*$-automorphism of $C^*_r(H, \sigma')$, and denote the associated twisted action of $K$ by $(\beta^r, \omega)$.
We note that the canonical tracial state $\tau'$ of $C_r^*(H, \sigma')$ is \emph{invariant under $\beta^r$}, that is, we have $\tau'\circ \beta_k^r = \tau'$ for each $k \in K$. This may be verified by direct computation on the generators of $C_r^*(H, \sigma')$. Alternatively, we may use that $\tau'$ is the restriction of $\tau$ to $C_r^*(H, \sigma')$ and observe that the restriction to $C_r^*(H, \sigma')$ of any tracial state of $C_r^*(G, \sigma)$ is invariant under $\beta^r$, since each $\beta^r_k$ is implemented by a unitary in $C_r^*(G, \sigma)$, namely $\lambda_\sigma(s(k))$.

For simplicity, we will just say that a tracial state of $C_r^*(H, \sigma')$ is \emph{$K$-invariant} when it is invariant under $\beta^r$. We will also say that \emph{$K$ acts on $C_r^*(H,\sigma')$ in a minimal way} when the zero ideal is the only proper (two-sided, closed) ideal of $C_r^*(H,\sigma')$ which is invariant under $\beta_k^r$ for each $k \in K$.

Using the decomposition $C_r^*(G,\sigma) \, \simeq \, C_r^*\big(C_r^*(H, \sigma'), K, \beta^r,\omega\big)$,
the following proposition is an immediate consequence of Bryder and Kennedy's recent results \cite[Corollaries~1.2 and~1.4]{BK}.
\begin{proposition}\label{BryKed}
Assume $H$ is normal and $K=G/H$.
\begin{itemize}
\item[(i)] If $K$ is $C^*$-simple, then $(G,\sigma)$ is $C^*$-simple if and only if $K$ acts on $C_r^*(H,\sigma')$ in a minimal way.
\item[(ii)] If $K$ has the unique trace property, then $(G,\sigma)$ has the unique trace property if and only if $\tau'$ is the only $K$-invariant tracial state of $C_r^*(H,\sigma')$.
\end{itemize}
\end{proposition}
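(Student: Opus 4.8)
The plan is to reduce the entire statement to Bryder and Kennedy's structural theorems for reduced twisted crossed products, by invoking the decomposition recalled just above:
\[
C_r^*(G,\sigma) \;\simeq\; C_r^*\big(C_r^*(H,\sigma'),\,K,\,\beta^r,\,\omega\big).
\]
This exhibits $C_r^*(G,\sigma)$ as the reduced twisted crossed product of the unital C$^*$-algebra $A:=C_r^*(H,\sigma')$ by $K=G/H$ with respect to the twisted action $(\beta^r,\omega)$ built from a section $s\colon K\to G$ as above (established in \cite{Bedos}, building on \cite{PR}). Under this identification, a two-sided closed ideal of $A$ is $\beta^r$-invariant if and only if it is $K$-invariant in the sense of the definition given above, and a tracial state of $A$ is $\beta^r$-invariant if and only if it is $K$-invariant; moreover, as already noted, the canonical trace $\tau'$ of $A$ is $K$-invariant.

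For part~(i): since $K$ is $C^*$-simple by hypothesis, I would apply \cite[Corollary~1.2]{BK} to the twisted crossed product above. It yields that $C_r^*\big(A,K,\beta^r,\omega\big)$ is simple if and only if $A$ has no proper nonzero $\beta^r$-invariant ideal, i.e.\ if and only if $K$ acts on $C_r^*(H,\sigma')$ in a minimal way; this is precisely the asserted equivalence. For part~(ii): since $K$ has the unique trace property by hypothesis, I would similarly apply \cite[Corollary~1.4]{BK}, which gives that $C_r^*\big(A,K,\beta^r,\omega\big)$ has a unique tracial state if and only if $A$ has a unique $K$-invariant tracial state. Because $\tau'$ is always $K$-invariant, the latter condition says exactly that $\tau'$ is the only $K$-invariant tracial state of $C_r^*(H,\sigma')$, so again we obtain the asserted equivalence.

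I do not expect any genuine obstacle here: all of the substance sits in the crossed-product decomposition and in the theorems of \cite{BK}. The only items needing (routine) checking are that the phrases ``$K$ acts in a minimal way'' and ``$K$-invariant tracial state'' in the statement translate into ``$\beta^r$-invariant'' under the isomorphism, and that $\tau'$ is indeed $K$-invariant --- both of which were observed in the discussion preceding the proposition. The one point worth a moment's care is that \cite{BK} must be applied in its form valid for \emph{twisted} crossed products (not merely for ordinary ones); granting that, the proof is essentially a one-line citation once the decomposition is in place.
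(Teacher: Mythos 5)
Your proposal matches the paper's own argument: the authors state that the proposition is an immediate consequence of the decomposition $C_r^*(G,\sigma)\simeq C_r^*\big(C_r^*(H,\sigma'),K,\beta^r,\omega\big)$ together with \cite[Corollaries~1.2 and~1.4]{BK}, which is exactly the route you take. The routine translation checks you flag are likewise handled in the discussion preceding the proposition, so there is nothing to add.
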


\begin{remark}\label{Prim}
When $C_r^*(H,\sigma')$ is abelian, one may investigate if $K$ acts minimally by computing first the Gelfand spectrum of $C_r^*(H,\sigma')$, as we will do in Example~\ref{F2xZ-2} and Proposition~\ref{BS-prop}. More generally, one may try to determine $\op{Prim}(C_r^*(H,\sigma'))$, the primitive ideal space of $C_r^*(H,\sigma')$ equipped with the hull-kernel topology, and use the fact that there is a one-to-one correspondence between the ideals of a C$^*$-algebra $A$ and the closed subsets of $\op{Prim}(A)$ (see e.g.\ \cite[Section~II.6.5]{Bla}).
If $A$ is unital, then $\op{Prim}(A)$ is compact and the Dauns-Hofmann theorem provides an isomorphism between the center $Z(A)$ of $A$ and $C(\op{Prim}(A))$.
Thus, in the special case where $A$ is unital and $\op{Prim}(A)$ is Hausdorff, $\op{Prim}(A)$ is homeomorphic to the Gelfand spectrum of $Z(A)$.
We will illustrate how this may used in combination with Proposition~\ref{BryKed} in subsection~\ref{Z2xF2}.
\end{remark}

\subsection{Subgroups of finite index}
It is known \cite{BdH, Popa} that if $G$ is an ICC group and $H$ is a subgroup of $G$ with finite index, then we have
\begin{equation} \label{EI1}
\text{$G$ is $C^*$-simple } \Longleftrightarrow \text{ $H$ is $C^*$-simple}
\end{equation}
and
\begin{equation} \label{EI2}
\text{$G$ has the unique trace property } \Longleftrightarrow \text{ $H$ has the unique trace property}.
\end{equation}

Note that $H$ is ICC whenever $G$ is ICC and $[G:H] < \infty$.
In the twisted case, Kleppner's condition is not necessarily inherited by a subgroup of finite index.
A twisted version of \eqref{EI1} and \eqref{EI2} is therefore as follows.

\begin{proposition}\label{finite index}
Let $H$ be a subgroup of $G$ with finite index. Let $\sigma \in Z^2(G,\T)$ and let $\sigma'$ denote the restriction of $\sigma$ to $H\times H$. Assume that both $(G,\sigma)$ and $(H,\sigma')$ satisfy Kleppner's condition.
Then we have
\begin{equation} \label{TEI1}
\text{$(G,\sigma)$ is $C^*$-simple } \Longleftrightarrow \text{ $(H,\sigma')$ is $C^*$-simple}
\end{equation}
and
\begin{equation} \label{TEI2}
\text{$(G,\sigma)$ has the unique trace property }
\Longleftrightarrow
\text{ $(H,\sigma')$ has the unique trace property}.
\end{equation}
\end{proposition}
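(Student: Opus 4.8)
The plan is to adapt the finite-index arguments of \cite{BdH} and \cite{Popa} to the twisted setting, the two Kleppner hypotheses playing the role that the ICC condition plays in the untwisted case. If $G$, hence $H$, is finite, then by Kleppner's condition $C^*_r(G,\sigma)$ and $C^*_r(H,\sigma')$ are finite-dimensional factors, i.e.\ full matrix algebras, so both statements hold trivially; assume therefore that $G$, and hence $H$, is infinite. Write $A=C^*_r(H,\sigma')$, $B=C^*_r(G,\sigma)$, $n=[G:H]$, and fix a left transversal $t_1=e,t_2,\dots,t_n$ for $G/H$. Recall from Subsection~\ref{subgroups} the faithful conditional expectation $E\colon B\to A$. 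A short computation with the cocycle identity shows that $\{\lambda_\sigma(t_1),\dots,\lambda_\sigma(t_n)\}$ is an orthonormal basis for $B$, viewed as a right Hilbert $A$-module via $\langle b,b'\rangle=E(b^*b')$; in particular $b=\sum_i\lambda_\sigma(t_i)\,E(\lambda_\sigma(t_i)^*b)$ for every $b\in B$, so $B$ is free of rank $n$ over $A$, the inclusion $A\subseteq B$ has finite (Watatani) index, and the associated $C^*$-basic construction is $C^*\langle B,e_H\rangle\cong M_n(A)$, into which $B$ sits unitally and again with finite index (through the dual conditional expectation). One also has $\tau=\tau'\circ E=(\operatorname{tr}_n\otimes\tau')|_B$, and Kleppner's condition for $(G,\sigma)$, resp.\ $(H,\sigma')$, makes $W^*(G,\sigma)$, resp.\ $W^*(H,\sigma')$, a type~II$_1$ factor, so that $B$, resp.\ $A$, has trivial centre.

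For \eqref{TEI1}: since a nonzero ideal of a C$^*$-algebra meets any finite-index subalgebra nontrivially (a standard feature of finite-index inclusions, via faithfulness of $E$ and the orthonormal basis), if $(H,\sigma')$ is $C^*$-simple then every nonzero ideal $J$ of $B$ satisfies $J\cap A=A\ni 1$, hence $J=B$, so $(G,\sigma)$ is $C^*$-simple. Conversely, if $(G,\sigma)$ is $C^*$-simple, applying the same principle to the finite-index inclusion $B\subseteq M_n(A)=C^*\langle B,e_H\rangle$ shows that every nonzero ideal of $M_n(A)$ meets $B$, hence (as $1_B=1_{M_n(A)}$ and $B$ is simple) equals $M_n(A)$; therefore $M_n(A)$, and so $A$, is simple, i.e.\ $(H,\sigma')$ is $C^*$-simple.

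For \eqref{TEI2}: a tracial state of $B$ restricts to a tracial state of $A$, and any tracial state $\psi$ of $A$ induces the tracial state $\operatorname{tr}_n\otimes\psi$ of $M_n(A)=C^*\langle B,e_H\rangle$, whose restriction to $B$ is again a tracial state. When $G$ (hence $H$) is amenable, \eqref{TEI2} follows at once from \eqref{TEI1} and Theorem~\ref{Murph}, as the unique trace property is then equivalent to $C^*$-simplicity for each of $(G,\sigma)$ and $(H,\sigma')$. In general, to prove ``$\Longleftarrow$'' one takes a tracial state $\phi$ on $B$; then $\phi|_A=\tau'$, so $\phi(\lambda_\sigma(g))=0$ for $g\in H\setminus\{e\}$, and one must also handle $g\in G\setminus H$: if such a $g$ is not $\sigma$-regular, pick $k$ commuting with $g$ with $\widetilde\sigma(k,g)\neq 1$, so that $(1-\widetilde\sigma(k,g))\,\phi(\lambda_\sigma(g))=0$; if $g\neq e$ is $\sigma$-regular, its $G$-conjugacy class is infinite by Kleppner's condition for $(G,\sigma)$ and, because $[G:H]<\infty$, so is its $H$-conjugacy class, and this must be used to force $\phi(\lambda_\sigma(g))=0$. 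For ``$\Longrightarrow$'', given a tracial state $\psi$ on $A$, the restriction of $\operatorname{tr}_n\otimes\psi$ to $B$ is a tracial state, hence equals $\tau=(\operatorname{tr}_n\otimes\tau')|_B$; it remains to deduce that $\operatorname{tr}_n\otimes\psi$ and $\operatorname{tr}_n\otimes\tau'$, agreeing on $B$, agree on all of $M_n(A)$, whence $\psi=\tau'$.

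These two transfers in \eqref{TEI2} are the main obstacle. As in the untwisted setting — where neither $C^*$-simplicity nor uniqueness of the trace is in general inherited by subgroups — the crux is to genuinely exploit the finite-index tower $C^*_r(H,\sigma')\subseteq C^*_r(G,\sigma)\subseteq M_n(C^*_r(H,\sigma'))$ in tandem with the factor property supplied by Kleppner's condition (notably the finite-dimensionality of the relative commutant of $W^*(H,\sigma')$ inside $W^*(G,\sigma)$), so that a trace can be carried along the tower and pinned down; this is precisely where a careful adaptation of the arguments in \cite{BdH} and \cite{Popa} is needed.
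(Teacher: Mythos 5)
Your setup — the quasi-basis $\{\lambda_\sigma(t_i)\}$, the finite Watatani index of $E$, and the observation that Kleppner's condition makes both traces factorial — is exactly the framework the paper uses; the paper then feeds these verified hypotheses into \cite[Corollary~4.6]{Popa}, whose two parts deliver \eqref{TEI1} and \eqref{TEI2} directly. The problem is that where you replace Popa's theorem by your own arguments, the key step is false. The ``standard feature'' that a nonzero ideal of a C$^*$-algebra meets any finite-index subalgebra nontrivially does not hold: for $\C\subseteq\C\oplus\C$ with the index-$2$ expectation $(a,b)\mapsto\frac12(a+b)$, the ideal $\C\oplus 0$ misses $\C\cdot(1,1)$; and for the group inclusion $C^*_r(2\Z)\subseteq C^*_r(\Z)\cong C(\T)$ (index $2$, with $C^*_r(2\Z)$ sitting as the functions of $z^2$), the ideal of functions vanishing on the closed upper half-circle is nonzero but intersects the subalgebra trivially. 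Faithfulness of $E$ only gives $E(x^*x)\neq 0$ in $A$ for $0\neq x\in J$; it does not place $E(x^*x)$ in $J$. What rescues the statement is precisely the factoriality of $\tau$ and $\tau|_A$ coming from the two Kleppner hypotheses (note that both counterexamples above violate factoriality), and this is the content of Popa's relative Dixmier-property machinery — which your simplicity argument never actually invokes. Both directions of your proof of \eqref{TEI1} therefore rest on an unproved (and in general false) lemma.

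The proof of \eqref{TEI2} is likewise incomplete on its own terms: for ``$\Longleftarrow$'' you reduce to showing $\phi(\lambda_\sigma(g))=0$ for $\sigma$-regular $g\in G\setminus H$ with infinite $H$-conjugacy class, and for ``$\Longrightarrow$'' to showing that two traces on $M_n(A)$ agreeing on $B$ agree on $M_n(A)$; in both places you state what ``must be used'' or ``remains to be deduced'' without supplying the argument. These are exactly the nontrivial averaging steps (again the relative Dixmier property for the factorial inclusion) that \cite[Corollary~4.6]{Popa} packages. The fix is not a different idea but an actual appeal to that result after checking its hypotheses via \cite[Proposition~2.1.5]{Wat}, which is what the paper does; as written, your proposal asserts the conclusion of Popa's theorem twice while only proving the (correct, but routine) quasi-basis computation.
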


\begin{proof}
We will deduce both equivalences from \cite[Corollary~4.6]{Popa}, so we have to check that all the assumptions in this corollary are satisfied. We first recall that the GNS-representation of $A:= C_r^*(G,\sigma)$ corresponding to $\tau$ is the identity representation of $A$ on $\ell^2(G)$. The canonical conditional expectation $E$ from $A$ onto $B:=C_r^*(H,\sigma')$ (identified as a unital C$^*$-subalgebra of $A$) clearly satisfies that $\tau = \tau\circ E$. Since $(G,\sigma)$ satisfies Kleppner's condition (by assumption), we know that $W^*(G,\sigma)$ is a factor, hence that $\tau$ is factorial. Moreover, since $\tau_{\mid B}$ coincides with the canonical tracial state $\tau'$ of $B$, and $(H,\sigma')$ is assumed to satisfy Kleppner's condition, we also know that $\tau_{\mid B}$ is factorial. As explained in above, there exists a conditional expectation $\mathcal{E}$ from $W^*(G, \sigma)$ onto $W^*(H,\sigma')$ that extends $E$.

Now, let $\{ g_1, \cdots, g_n\}$ be a set of left coset representatives of $H$ in $G$. Then $\{\lambda_\sigma(g_i), \lambda_\sigma(g_i)^*\}_{i=1}^n$ is a quasi-basis for $E$ in the sense of \cite[Definition~1.2.2]{Wat}, that is, we have
\[
\sum_{i=1}^n \, \lambda_\sigma(g_i) \,E\big(\lambda_\sigma(g_i)^*\, x\big) = \, x \, = \sum_{i=1}^n\, E\big(x\, \lambda_\sigma(g_i)\big) \, \lambda_\sigma(g_i)^*
\]
for all $x \in A$. Indeed, by a density argument, it suffices to show that this holds when $x$ is of the form $x = \sum_{g\in S} \, x_g\, \lambda_\sigma(g)$, where $S$ is a finite subset of $G$ and $x_g \in \C$ for all $g\in S$. We then have
\begin{align*}
\sum_{i=1}^n \, \lambda_\sigma(g_i) \,E\big(\lambda_\sigma(g_i)^*\, x\big) &=
\sum_{i=1}^n \, \lambda_\sigma(g_i) \,\sum_{g\in S} \, x_g\, E\big(\lambda_\sigma(g_i)^*\,\lambda_\sigma(g) \big)\\
& = \sum_{i=1}^n \, \lambda_\sigma(g_i) \,\sum_{g\in S} \, x_g\, \overline{\sigma(g_i^{-1}, g_i)}\, \sigma(g_i^{-1}, g)\, E\big(\lambda_\sigma(g_i^{-1}\, g) \big)\\
& = \sum_{i=1}^n \, \lambda_\sigma(g_i) \,\sum_{g'\in\, g_i^{-1}S} \, x_{g_i g'}\, \overline{\sigma(g_i^{-1}, g_i)}\, \sigma(g_i^{-1}, g_i \,g')\, E\big(\lambda_\sigma(g') \big)\\
& = \sum_{i=1}^n \, \lambda_\sigma(g_i) \,\sum_{h\in\, H\,\cap\, g_i^{-1}S} \, x_{g_i h}\, \overline{\sigma(g_i^{-1}, g_i)}\, \sigma(g_i^{-1}, g_i \,h)\, \lambda_\sigma(h) \\
& = \sum_{i=1}^n \,\sum_{h\in\, H\,\cap\, g_i^{-1}S} \, x_{g_i h}\, \overline{\sigma(g_i^{-1}, g_i)}\, \sigma(g_i^{-1}, g_i \,h)\, \sigma(g_i, h)\, \lambda_\sigma(g_i h) \\
& = \sum_{i=1}^n \,\sum_{h\in\, H\,\cap\, g_i^{-1}S} \, x_{g_i h}\, \lambda_\sigma(g_i h) 
= \sum_{i=1}^n \,\sum_{g\in\, g_iH\,\cap\,S} \, x_{g}\, \lambda_\sigma(g)\\
&= \sum_{g\in S} \, x_g\, \lambda_\sigma(g) = x\,, 
\end{align*}
where we have used that $ \sigma(g_i^{-1}, g_i) \, = \, \sigma(g_i^{-1}, g_i) \, \sigma(e, h) = \sigma(g_i^{-1}, g_i \,h)\, \sigma(g_i,h)$. The proof that $\sum_{i=1}^n\, E\big(x\, \lambda_\sigma(g_i)\big) \, \lambda_\sigma(g_i)^* =\, x$ is similar.

\medskip

It follows from \cite[Proposition~2.1.5]{Wat} that $E$ is of finite index in the sense of Pimsner-Popa, and, moreover, that the extra assumption in part 1.~of \cite[Corollary~4.6]{Popa} is also satisfied. Hence, we may apply part~1.\ and part~2.\ of \cite[Corollary~4.6]{Popa} to conclude that the desired equivalences \eqref{TEI1} and \eqref{TEI2} hold.
\end{proof}

\subsection{Direct limits of groups}
The following result is useful when considering direct limit of groups.
\begin{proposition}
Assume that $G$ is an inductive limit of a directed family of subgroups $\{G_i\}_{i\in I}$. Let $\sigma \in Z^2(G,\T)$ and let $\sigma_i$ denote the restriction of $\sigma$ to $G_i\times G_i$ for each $i\in I$.
Then the following assertions hold:
\begin{itemize}
\item[(i)] If $(G_i,\sigma_i)$ satisfies Kleppner's condition for all $i$, then $(G,\sigma)$ satisfies Kleppner's condition.
\item[(ii)] If $(G_i,\sigma_i)$ is $C^*$-simple for all $i$, then $(G,\sigma)$ is $C^*$-simple.
\item[(iii)] If $(G_i,\sigma_i)$ has the unique trace property for all $i$, then $(G,\sigma)$ has the unique trace property.
\end{itemize}
\end{proposition}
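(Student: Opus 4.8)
The plan is to reduce parts (ii) and (iii) to standard facts about increasing unions of C$^*$-algebras, and to prove (i) by a direct combinatorial argument. The key structural observation is that, via the natural embeddings recalled in subsection~\ref{subgroups}, the C$^*$-algebras $A_i := C^*_r(G_i,\sigma_i)$ form a directed family of unital C$^*$-subalgebras of $A := C^*_r(G,\sigma)$, all sharing the common unit $1 = \lambda_\sigma(e)$; their union $\bigcup_{i\in I} A_i$ is a $*$-subalgebra containing every $\lambda_\sigma(g)$ with $g \in G = \bigcup_i G_i$, hence is dense in $A$; and the canonical trace $\tau_i$ of $A_i$ is the restriction of $\tau$ to $A_i$. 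The only point that needs a little care is checking that the embeddings $C^*_r(G_i,\sigma_i) \hookrightarrow C^*_r(G,\sigma)$ are isometric and compatible with the inclusions $G_i \subseteq G_j$, so that $A$ is genuinely the inductive limit of the $A_i$; this is exactly the content of \cite[subsection~4.26]{ZM}. Beyond this bookkeeping I do not expect any serious obstacle.

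For (i), I would argue by contraposition. Suppose $(G,\sigma)$ does not satisfy Kleppner's condition, so there is some $g \in G \setminus \{e\}$ that is $\sigma$-regular and whose conjugacy class $\{xgx^{-1} : x \in G\}$ is finite. Pick $i \in I$ with $g \in G_i$. Since $\sigma_i$-regularity of $g$ in $G_i$ only tests the equality $\sigma(h,g) = \sigma(g,h)$ over those $h \in G_i$ commuting with $g$ --- a subfamily of those tested for $\sigma$-regularity of $g$ in $G$ --- the element $g$ is $\sigma_i$-regular in $G_i$; and its $G_i$-conjugacy class, being contained in the finite set $\{xgx^{-1} : x \in G\}$, is finite. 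Hence $(G_i,\sigma_i)$ fails Kleppner's condition, which proves (i) by contraposition.

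For (ii), let $J$ be a proper closed two-sided ideal of $A$ and let $\pi \colon A \to A/J$ be the quotient map, so $\pi(1) \neq 0$ since $A/J \neq \{0\}$. For each $i$, the kernel of $\pi|_{A_i}$ is the closed two-sided ideal $J \cap A_i$ of the simple C$^*$-algebra $A_i$; it cannot be all of $A_i$, since otherwise $\pi(1) = 0$, so it is $\{0\}$, and $\pi|_{A_i}$ is injective, hence isometric. As $\pi$ is then isometric on the dense $*$-subalgebra $\bigcup_i A_i$, it is isometric on $A$, so $J = \{0\}$; thus $A$ is simple, i.e., $(G,\sigma)$ is $C^*$-simple. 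For (iii), let $\phi$ be a tracial state of $A$. For each $i$, $\phi|_{A_i}$ is a tracial state of $A_i$, hence equals $\tau_i = \tau|_{A_i}$ by the unique trace property of $(G_i,\sigma_i)$; therefore $\phi$ and $\tau$ agree on the dense $*$-subalgebra $\bigcup_i A_i$, hence on $A$, so $(G,\sigma)$ has the unique trace property.
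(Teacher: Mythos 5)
Your proposal is correct and follows essentially the same route as the paper: the same contrapositive argument for (i), and for (ii) and (iii) the standard inductive-limit facts that the paper simply cites (to \cite[Proposition~10]{BdH}) and that you spell out in full. The details you supply (density of $\bigcup_i A_i$, injectivity of the quotient map on each simple $A_i$, restriction of a trace to each $A_i$) are exactly the intended content of that citation.
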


\begin{proof}
If $g$ is a nontrivial $\sigma$-regular element in $G$ with finite conjugacy class,
then there is some $i\in I$ such that $g \in G_i$.
It is easy to check that $g$ is then $\sigma_i$-regular in $G_i$,
and that its conjugacy class in $G_i$ is finite.
Hence, (i) holds. Assertion (ii) and (iii) are consequences of general facts valid for C$^*$-algebras,
for example mentioned in \cite[Proposition~10]{BdH}.
\end{proof}

\subsection{Direct products of groups}

We consider a couple of examples involving direct product of groups.
The first one just says that it is easy to handle product cocycles.
The second one illustrates that other types of cocycles require more work.
\begin{proposition}
For $i=1,2$, let $G_i$ be a group and $\sigma_i \in Z^2(G_i,\T)$.
Set $G=G_1 \times G_2$ and $\sigma = \sigma_1 \times \sigma_2$.
Then it is well known that $C^*_r(G,\sigma) \simeq C^*_r(G_1,\sigma_1) \otimes_{\textup{min}} C^*_r(G_2,\sigma_2)$ and the following statements are easily checked:
\begin{itemize}
\item[(i)] $(G,\sigma)$ satisfies Kleppner's condition if and only if both $(G_1,\sigma_1)$ and $(G_2,\sigma_2)$ satisfy Kleppner's condition.
\item[(ii)] $(G,\sigma)$ is $C^*$-simple if and only if both $(G_1,\sigma_1)$ and $(G_2,\sigma_2)$ are $C^*$-simple.
\item[(iii)] $(G,\sigma)$ has the unique trace property if and only if both $(G_1,\sigma_1)$ and $(G_2,\sigma_2)$ have the unique trace property.
\end{itemize}
\end{proposition}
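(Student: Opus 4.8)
The plan is to reduce everything to standard facts about tensor products, using the isomorphism $C^*_r(G,\sigma)\simeq C^*_r(G_1,\sigma_1)\otimes_{\textup{min}}C^*_r(G_2,\sigma_2)$ quoted in the statement together with its normal analogue $W^*(G,\sigma)\simeq W^*(G_1,\sigma_1)\mathbin{\overline{\otimes}}W^*(G_2,\sigma_2)$; both are obtained by identifying $\ell^2(G)$ with $\ell^2(G_1)\otimes\ell^2(G_2)$ and $\lambda_\sigma(g_1,g_2)$ with $\lambda_{\sigma_1}(g_1)\otimes\lambda_{\sigma_2}(g_2)$ (using $\sigma=\sigma_1\times\sigma_2$), and under them the canonical copy of $C^*_r(G_i,\sigma_i)$ inside $C^*_r(G,\sigma)$ (see subsection~\ref{subgroups}) becomes $C^*_r(G_1,\sigma_1)\otimes 1$, resp.\ $1\otimes C^*_r(G_2,\sigma_2)$, while $\tau_\sigma$ corresponds to $\tau_{\sigma_1}\otimes\tau_{\sigma_2}$.

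For (i), the quickest route is via the factor characterization of Kleppner's condition recalled in subsection~\ref{Klep}: $(G,\sigma)$ satisfies it iff $W^*(G,\sigma)$ is a factor, and since the center of a von~Neumann tensor product is the tensor product of the centers, $W^*(G_1,\sigma_1)\mathbin{\overline{\otimes}}W^*(G_2,\sigma_2)$ is a factor iff both $W^*(G_i,\sigma_i)$ are. (Alternatively one argues combinatorially: testing the $\sigma$-regularity condition on elements $(h_1,e)$ and $(e,h_2)$ shows that $(g_1,g_2)$ is $\sigma$-regular iff $g_1$ is $\sigma_1$-regular and $g_2$ is $\sigma_2$-regular, and the conjugacy class of $(g_1,g_2)$ is the product of those of $g_1$ and $g_2$; a nontrivial finite $\sigma$-regular class of one factor, say that of $g_1$, then produces the nontrivial finite $\sigma$-regular class of $(g_1,e)$ in $G$, and conversely.)

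For (ii), the implication from both $(G_i,\sigma_i)$ being $C^*$-simple to $(G,\sigma)$ being $C^*$-simple is the well-known fact that the minimal tensor product of two simple C$^*$-algebras is simple. For the converse, if $I$ is a proper nonzero closed ideal of $C^*_r(G_1,\sigma_1)$, the $*$-homomorphism $q\otimes\mathrm{id}\colon C^*_r(G_1,\sigma_1)\otimes_{\textup{min}}C^*_r(G_2,\sigma_2)\to\big(C^*_r(G_1,\sigma_1)/I\big)\otimes_{\textup{min}}C^*_r(G_2,\sigma_2)$ is well defined and surjective onto a nonzero algebra, so its kernel is a proper nonzero closed ideal of $C^*_r(G,\sigma)$; hence $C^*$-simplicity of $(G,\sigma)$ forces $C^*$-simplicity of each $(G_i,\sigma_i)$.

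For (iii), one direction is immediate: if $\phi_1\neq\tau_{\sigma_1}$ is a tracial state on $C^*_r(G_1,\sigma_1)$, then $\phi_1\otimes\tau_{\sigma_2}$ is a tracial state on $C^*_r(G,\sigma)$ different from $\tau_\sigma$. The converse is the delicate point, since a tracial state on $C^*_r(G,\sigma)$ restricting to $\tau_{\sigma_1}$ and $\tau_{\sigma_2}$ need not a priori be their product. Assume both $(G_i,\sigma_i)$ have the unique trace property; then each satisfies Kleppner's condition (recall $UT(G_i)\subset K(G_i)$), so each $W^*(G_i,\sigma_i)$ is a finite factor. Given a tracial state $\phi$ on $C^*_r(G,\sigma)$, its restriction to $C^*_r(G_i,\sigma_i)$ is a tracial state, hence equals $\tau_{\sigma_i}$; a routine GNS argument -- exploiting that $\phi$ restricts to the canonical trace on each $C^*_r(G_i,\sigma_i)$ and that $\Omega_\phi$ is separating for $M:=\pi_\phi\big(C^*_r(G,\sigma)\big)''$ -- then identifies $M_i:=\pi_\phi\big(C^*_r(G_i,\sigma_i)\big)''$ with $W^*(G_i,\sigma_i)$, a factor. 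The subalgebras $M_1,M_2$ commute and generate $M$, and because $M_1$ is a factor the natural normal $*$-homomorphism $\rho\colon M_1\mathbin{\overline{\otimes}}M_2\to M$ is injective (its kernel is $z\,(M_1\mathbin{\overline{\otimes}}M_2)$ for a central projection $z=1\otimes z_2$, and $\rho(z)=z_2$ vanishes only if $z_2=0$, since $M_2\hookrightarrow M$ faithfully), hence an isomorphism. Thus $M$ is a finite factor, so it carries a unique tracial state, necessarily the product of the unique traces of $M_1$ and $M_2$; reading this back through the isomorphisms gives $\phi(ab)=\tau_{\sigma_1}(a)\tau_{\sigma_2}(b)=\tau_\sigma(ab)$ for $a\in C^*_r(G_1,\sigma_1)$ and $b\in C^*_r(G_2,\sigma_2)$, and since such products span a dense subspace of $C^*_r(G,\sigma)$ we conclude $\phi=\tau_\sigma$. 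I expect this last step -- excluding non-product traces by passing to the von~Neumann algebra level and combining the tensor-splitting property of a factor subalgebra with uniqueness of the trace on a finite factor -- to be the main obstacle; parts (i) and (ii) are essentially formal.
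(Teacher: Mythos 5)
The paper offers no argument for this proposition (it is stated as ``easily checked''), so there is nothing to compare against; I will only assess your proof. Parts (i) and (ii) are correct: the factor/centre argument for (i), the combinatorial alternative, the simplicity of a minimal tensor product of simple C$^*$-algebras, and the quotient $q\otimes\mathrm{id}$ for the converse of (ii) (which is legitimate because the minimal tensor product is functorial for $*$-homomorphisms) are all sound. The forward direction of (iii) is also fine.

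The converse of (iii) has a genuine gap at the step where you invoke ``the natural normal $*$-homomorphism $\rho\colon M_1\mathbin{\overline{\otimes}}M_2\to M$.'' For two commuting von~Neumann subalgebras the multiplication map $M_1\odot M_2\to M$ need not extend to a normal (or even min-continuous) homomorphism of $M_1\mathbin{\overline{\otimes}}M_2$: the standard counterexample is $M_1=N$ and $M_2=N'$ for a II$_1$ factor $N\subset\mathcal{B}(L^2(N))$, where $N\mathbin{\overline{\otimes}}N'$ is a II$_1$ factor but $N\vee N'=\mathcal{B}(L^2(N))$ is type I, so no such $\rho$ exists. In your setting the existence of $\rho$ is essentially equivalent to the product formula $\omega_\Omega(ab)=\omega_\Omega(a)\,\omega_\Omega(b)$ for $a\in M_1$, $b\in M_2$, i.e.\ to the very statement you are trying to prove, so asserting it begs the question. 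The argument can be repaired while keeping your framework: since $M$ carries the faithful normal trace $\omega_\Omega$ and $M_1$ is a factor, the trace-preserving conditional expectation $E_{M_1}\colon M\to M_1$ maps $M_1'\cap M$ into $Z(M_1)=\C 1$ (conjugation-invariance under unitaries of $M_1$), whence $\omega_\Omega(ab)=\omega_\Omega(aE_{M_1}(b))=\omega_\Omega(a)\,\omega_\Omega(b)$ for $a\in M_1$, $b\in M_2\subset M_1'\cap M$; this gives $\phi=\tau_{\sigma_1}\otimes\tau_{\sigma_2}$ directly, and only then does $\rho$ exist. A much shorter route avoids von~Neumann algebras entirely: for positive $b\in C^*_r(G_2,\sigma_2)$ with $\phi(1\otimes b)>0$, the functional $a\mapsto\phi(a\otimes b)/\phi(1\otimes b)$ is a tracial state on $C^*_r(G_1,\sigma_1)$, hence equals $\tau_{\sigma_1}$ (and Cauchy--Schwarz handles $\phi(1\otimes b)=0$); thus $\phi(a\otimes b)=\tau_{\sigma_1}(a)\,\phi(1\otimes b)$, and then $b\mapsto\phi(1\otimes b)$ is a tracial state on $C^*_r(G_2,\sigma_2)$, forcing $\phi=\tau_{\sigma_1}\otimes\tau_{\sigma_2}$ on a dense subspace. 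This is presumably the ``easily checked'' argument the authors intend, and it does not require Kleppner's condition or any factoriality.
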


Note that, in general, if $G=G_1 \times G_2$, $\sigma \in Z^2(G,\T)$, and $\sigma_i$ denotes the restriction of $\sigma$ to $G_i\times G_i$ for $i=1,2$, then none of the above equivalences need to hold, as one can verify by considering various cocycles on $\Z^4 = \Z^2 \times \Z^2$. (Statement (i) is discussed in \cite[Section~3]{Om}).

\begin{example}\label{F2xZ}
Consider the group $G=\F_2 \times \Z$, where $\F_2$ denotes the free group on two generators, say $a$ and $b$.
Clearly, $G$ is non-amenable, hence not FC-hypercentral, and non-ICC.
Nevertheless, $G$ belongs to $\mathcal{K}$.
	
Indeed, as explained in \cite[Example~3.11]{Om}, every $\sigma \in Z^2(G,\T)$ is, up to similarity,
given by $\sigma((x,m),(y,n))=\phi(y,m)$ for some bihomomorphism $\phi\colon \F_2 \times \Z \to \T$.
Letting $\gamma\colon\F_2\to \T$ denote the homomorphism (character) given by $\gamma(x) = \phi(x,1)$, we have $\phi(x,m) = \gamma^m(x)$.
Moreover, $\phi$ is completely determined by $\mu=\gamma(a)$ and $\nu=\gamma(b)$.
The following conditions are then equivalent:
\begin{itemize}
\item[(i)] at least one of $\mu$ and $\nu$ is nontorsion,
\item[(ii)] $(G,\sigma)$ satisfies Kleppner's condition,
\item[(iii)] $(G,\sigma)$ is $C^*$-simple,
\item[(iv)] $(G,\sigma)$ has the unique trace property.
\end{itemize}
The equivalence of (i) and (ii) is shown in \cite[Example~3.11]{Om}.
Next, consider $H=\F_2\times\{0\}$ and let $s\colon \Z=G/H \to G$ be the section given by $s(k)= (e, k)$. 
From subsection~\ref{subgroups} we obtain the crossed product decomposition
\[
C^*_r(G,\sigma) \, \simeq \, C_r^*\big(C^*_r(\F_2), \Z, \beta\big)\, ,
\]
where the action $\beta$ of $\Z$ on $C^*_r(\F_2)$ is untwisted and determined by $\beta_k(\lambda(x))= \gamma^k(x)\,\lambda(x)$ for $x\in\F_2$ and $k\in\Z$.

Assume now that (i) holds. Then the map $m \mapsto \beta_m$ gives an embedding of $\Z$ into $\aut(C^*_r(\F_2))$.
As $\F_2$ is $C^*$-simple and has the unique trace property, we can then use \cite[Theorem~7]{Bed2} to conclude that both (iii) and (iv) hold. Alternatively, we could have used \cite{Yin} here.
Finally, as pointed out before, the implications (iii)~$\Rightarrow$~(ii) and (iv)~$\Rightarrow$~(ii) always hold.
\end{example}

\subsection{More on FC-hypercentral groups}\label{ICCG}

Set $ICC(G):= G/FCH(G)$.
We first remark that $ICC(G)$ has the unique trace property, i.e., $ICC(G)$ has trivial amenable radical, if and only if $FCH(G)=AR(G)$.

Indeed, if $FCH(G)=AR(G)$, then $ICC(G) = G/AR(G)$, which has trivial amenable radical.
The converse implication follows from the fact that if $N$ is a normal subgroup of $G$ such that $G/N$ has the unique trace property,
then $AR(G)\subset N$ (see \cite[Lemma~6.11]{Iva-Om}, and the comment before it).

In the same way, it can be shown that $ICC(G)$ is $C^*$-simple if and only if $FCH(G)=AH(G)$, where $AH(G)$ denote the \emph{amenablish radical} of $G$, as introduced in \cite{Iva-Om}.

\begin{theorem}\label{FCH-P}
Assume that $FCH(G) = AR(G)$, or equivalently, that $ICC(G)$ has the unique trace property.
Then $(G,\sigma)$ has the unique trace property whenever $(G,\sigma)$ satisfies Kleppner's condition.
Hence, $G$ belongs to $\K_{UT}$.
\end{theorem}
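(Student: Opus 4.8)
The plan is to exploit the crossed-product decomposition $C_r^*(G,\sigma)\simeq C_r^*\big(C_r^*(H,\sigma'),K,\beta^r,\omega\big)$ with $H=FCH(G)=AR(G)$ and $K=G/H=ICC(G)$, and then play off Proposition~\ref{BryKed}(ii) against Theorem~\ref{FCH}. Since $H$ is FC-hypercentral (being equal to its own FC-hypercenter $FCH(G)$), Theorem~\ref{FCH} gives that $H\in\K$; in particular $\sigma'\in\K_{UT}(H)$ in the sense that $UT(H)=K(H)$. The heart of the matter is therefore to show two things: first, that $(G,\sigma)$ satisfying Kleppner's condition forces $(H,\sigma')$ to satisfy Kleppner's condition (so that $(H,\sigma')$ actually has the unique trace property), and second, that $\tau'$ is the only $K$-invariant tracial state of $C_r^*(H,\sigma')$. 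Granting both, since $K=ICC(G)$ has the unique trace property by hypothesis, Proposition~\ref{BryKed}(ii) yields that $(G,\sigma)$ has the unique trace property, which is exactly what we want; the final sentence ``$G\in\K_{UT}$'' is then immediate from the definition, as $\sigma$ was arbitrary in $K(G)$.

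For the first point, suppose $(H,\sigma')$ fails Kleppner's condition, so there is a nontrivial $\sigma'$-regular element $h\in H$ whose $H$-conjugacy class is finite. One must promote this to a nontrivial $\sigma$-regular element of $G$ with finite $G$-conjugacy class, contradicting $\sigma\in K(G)$. The $G$-conjugacy class of $h$ lands inside $H$ (as $H\trianglelefteq G$), and the point is that it is still finite: this is where $H$ being the \emph{FC-hypercenter}, not merely an FC-subgroup, matters — one argues up the upper FC-central series that each term is normal in $G$ with the orbit-finiteness passing along, so that $h\in FC(G)$, i.e. $h$ has finite $G$-conjugacy class. The $\sigma$-regularity of $h$ as an element of $G$ then needs to be checked against \emph{all} $g\in G$ commuting with $h$, not just those in $H$; here I expect to use the standard fact (as in \cite{Kle,Om}) that $\sigma$-regularity of an element of a finite conjugacy class is detected by the behaviour of $\widetilde\sigma$ on the centralizer, together with a cohomological reduction — replacing $\sigma$ by a similar cocycle that is trivial on $H\times H$ is not available, but one can still analyze $\widetilde\sigma(g,h)$ via the cocycle identity. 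Alternatively, and perhaps more cleanly, one observes that Kleppner's condition for $(G,\sigma)$ says $W^*(G,\sigma)$ is a factor, and uses the conditional expectation $\mathcal E\colon W^*(G,\sigma)\to W^*(H,\sigma')$ together with the twisted-crossed-product structure to locate the center of $W^*(H,\sigma')$ inside the fixed-point algebra of $\beta$; if that center were nontrivial it would have to be moved around by $K$, and minimality-type considerations would contradict factoriality of $W^*(G,\sigma)$. I would pick whichever of these is shorter once the details are in hand.

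For the second point — uniqueness of the $K$-invariant trace on $C_r^*(H,\sigma')$ — the strategy is to use that $K=ICC(G)$ acts on the center $Z:=Z(W^*(H,\sigma'))$ and that, because $H=AR(G)$, this action is ``generically free'' in a way that kills invariant states other than $\tau'$. Concretely: $Z$ is an abelian von~Neumann algebra on which $K$ acts by the automorphisms $\beta_k$; a $K$-invariant tracial state $\mu$ on $C_r^*(H,\sigma')$ restricts to a $K$-invariant state on $Z$, hence (after identifying $Z$ with $L^\infty$ of its spectrum) to a $K$-invariant measure class. The assumption $FCH(G)=AR(G)$ is designed precisely so that the quotient $K$ has trivial amenable radical, hence (by \cite{BKKO}) the unique trace property; combined with the structure of the $K$-action on $Z$ coming from conjugation in $G$, one should be able to show the only possibility is the point mass at the trivial character, i.e. $\mu$ agrees with $\tau'$ on $Z$. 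Since $\tau$ (and hence $\tau'$) is faithful and $\tau'\circ\mathcal E=\tau'$-type averaging pins $\mu$ down on all of $C_r^*(H,\sigma')$ once it is determined on the center, this finishes the argument. The main obstacle, I expect, is exactly this last step: controlling \emph{all} $K$-invariant traces rather than just the $K$-invariant \emph{characters} of the center, since $C_r^*(H,\sigma')$ need not be abelian — one really needs the interplay between the FC-hypercentral structure of $H$ (which forces $C_r^*(H,\sigma')$ to be of a very restricted, ``type~I over its center'' shape) and the $ICC$ nature of $K$, and making that precise is where the work lies.
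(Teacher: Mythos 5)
Your overall frame agrees with the paper's: decompose $C_r^*(G,\sigma)$ over $H=FCH(G)$ with quotient $K=ICC(G)$, note that $K$ has the unique trace property by hypothesis, and invoke Proposition~\ref{BryKed}~(ii). The gap is in the middle step. Your first claim --- that Kleppner's condition for $(G,\sigma)$ forces Kleppner's condition for $(H,\sigma')$ --- is false. Take $G=\F_2\times\Z$ as in Examples~\ref{F2xZ} and~\ref{F2xZ-2}: there $FCH(G)=AR(G)=\{e\}\times\Z$ and $\sigma'=1$, so $(H,\sigma')=(\Z,1)$ never satisfies Kleppner's condition (every nonzero integer is a nontrivial $1$-regular element with a singleton conjugacy class, and indeed $K(\Z)=\emptyset$), yet $(G,\sigma)$ does satisfy Kleppner's condition whenever $\mu$ or $\nu$ is nontorsion. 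The reason your promotion argument cannot work is that $\sigma$-regularity of $h\in H$ as an element of $G$ is tested against the full centralizer $Z_G(h)$, and elements of $Z_G(h)\setminus H$ can (and in this example do) destroy regularity; relatedly, the assertion that finiteness of the $H$-conjugacy class of $h\in FCH(G)$ propagates up the FC-central series to finiteness of the $G$-conjugacy class is not a valid argument.

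What is actually needed --- and is strictly weaker than the unique trace property for $(H,\sigma')$ --- is precisely your second point: that Kleppner's condition for $(G,\sigma)$ implies $\tau'$ is the unique $K$-invariant tracial state on $C_r^*(H,\sigma')$. The paper obtains this in one line by citing \cite[Proposition~4.3]{BO}, which states exactly this equivalence for $H=FCH(G)$ and $K=ICC(G)$; the rest of the paper's proof is then the application of Proposition~\ref{BryKed}~(ii) that you also propose. Your sketch of this step (restricting an invariant trace to the center of $W^*(H,\sigma')$ and analyzing $K$-invariant measure classes on its spectrum) is left explicitly unfinished, and, as you yourself note, it would at best control the restriction of an invariant trace to the center rather than the trace itself, since $C_r^*(H,\sigma')$ need not be abelian. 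As written, the proposal therefore does not close the proof; the missing ingredient is the result of \cite{BO} (or a reproof of it exploiting the FC-hypercentral structure of $H$ directly).
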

\begin{proof}
Suppose that $(G,\sigma)$ satisfies Kleppner's condition. Set $H= FCH(G)$ and $K = ICC(G)$.
Applying \cite[Proposition~4.3]{BO}, we get that the canonical tracial state on $C^*_r(H, \sigma')$ is the only $K$-invariant tracial state on $C^*_r(H, \sigma')$.
Since $K$ has the unique trace property, it follows from Proposition~\ref{BryKed}~(ii) (i.e., from \cite[Corollary~5.3]{BK}) that $(G,\sigma)$ has the unique trace property.
\end{proof}

\begin{remark} \label{ICCsimple}
Let us consider the case where $ICC(G)$ is $C^*$-simple. Then $ICC(G)$ has the unique trace property, so Theorem~\ref{FCH-P} gives that $G$ lies in $\K_{UT}$, and one may wonder whether it will always lie in $\K$. Set $H=FCH(G)$. The problem is then to decide if $K=ICC(G)$ acts on $C_r^*(H, \sigma')$ in a minimal way when $\sigma \in K(G)$, since Proposition~\ref{BryKed}~(i) will then imply that $(G,\sigma)$ is $C^*$-simple.
\end{remark}	
An example of a situation where $ICC(G)$ acts on $C_r^*(FCH(G),\sigma')$ in a minimal way is when $(FCH(G),\sigma')$ satisfies Kleppner's condition,
because $FCH(G)$ is FC-hypercentral, so it follows from Theorem~\ref{FCH} that $C_r^*(FCH(G),\sigma')$ is simple in this case.
Hence 
Proposition~\ref{BryKed}~(i) and Theorem~\ref{FCH-P} give:
\begin{corollary}
If $ICC(G)$ is $C^*$-simple and $(FCH(G),\sigma')$ satisfies Kleppner's condition, then $(G, \sigma)$ is $C^*$-simple with the unique trace property.
\end{corollary}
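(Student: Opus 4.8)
The plan is to assemble the corollary from Theorem~\ref{FCH} (applied to the subgroup $FCH(G)$), Proposition~\ref{BryKed}, and Theorem~\ref{FCH-P}, exactly along the lines indicated in the sentence preceding the statement. Throughout I would write $H = FCH(G)$, $K = ICC(G) = G/H$, and let $\sigma'$ denote the restriction of $\sigma$ to $H \times H$. Since $H$ is normal in $G$, the crossed-product description of subsection~\ref{subgroups}, and hence Proposition~\ref{BryKed}, are available, with $\tau'$ playing the role of the canonical trace on $C^*_r(H,\sigma')$.

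First I would record that $H = FCH(G)$ is itself FC-hypercentral (the standard fact about the FC-hypercenter that is already invoked just before the statement), so Theorem~\ref{FCH} gives $H \in \mathcal{K}$, i.e.\ $C^*S(H) = K(H) = UT(H)$. By hypothesis $(H,\sigma')$ satisfies Kleppner's condition, so $\sigma' \in K(H) = C^*S(H) \cap UT(H)$; in particular $C^*_r(H,\sigma')$ is simple and $\tau'$ is its unique tracial state. Because $C^*_r(H,\sigma')$ is simple, its only proper closed two-sided ideal is $\{0\}$, so $K$ automatically acts on $C^*_r(H,\sigma')$ in a minimal way; as $K = ICC(G)$ is $C^*$-simple by hypothesis, Proposition~\ref{BryKed}~(i) then yields that $(G,\sigma)$ is $C^*$-simple, and therefore that $(G,\sigma)$ satisfies Kleppner's condition, since $C^*S(G) \subset K(G)$.

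It remains to deduce the unique trace property, for which I would invoke Theorem~\ref{FCH-P}. Since $ICC(G)$ is $C^*$-simple, it has the unique trace property (the class of $C^*$-simple groups being contained in the class of groups with the unique trace property, by \cite{BKKO, LB}), and by the equivalence recorded in subsection~\ref{ICCG} this means $FCH(G) = AR(G)$. As $(G,\sigma)$ satisfies Kleppner's condition by the previous paragraph, Theorem~\ref{FCH-P} applies and gives that $(G,\sigma)$ has the unique trace property. Alternatively, one may bypass Theorem~\ref{FCH-P}: $\tau'$ is the unique tracial state of $C^*_r(H,\sigma')$, hence in particular the unique $K$-invariant one, and $K$ has the unique trace property, so Proposition~\ref{BryKed}~(ii) gives the same conclusion directly.

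I do not expect a genuine obstacle here: the corollary is essentially a repackaging of the results collected earlier. The only point that warrants a line of care is the remark that $FCH(G)$ is FC-hypercentral, which is what legitimizes applying Theorem~\ref{FCH} to $H$ rather than to $G$; beyond that, the argument is a direct appeal to Theorem~\ref{FCH}, Proposition~\ref{BryKed}, Theorem~\ref{FCH-P}, and the inclusion $C^*S(G) \subset K(G)$.
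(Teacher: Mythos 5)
Your argument is correct and follows essentially the same route as the paper: the paper also deduces simplicity of $C^*_r(FCH(G),\sigma')$ from Theorem~\ref{FCH}, uses the resulting minimality of the $ICC(G)$-action together with Proposition~\ref{BryKed}~(i) to get $C^*$-simplicity of $(G,\sigma)$, and then invokes Theorem~\ref{FCH-P} for the unique trace property. Your alternative ending via Proposition~\ref{BryKed}~(ii) is a valid (and slightly more direct) variant, but the substance of the proof is the same.
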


\begin{example}\label{F2xZ-2}
The procedure described in Remark~\ref{ICCsimple} works well when $G=\F_2 \times \Z$, as in Example~\ref{F2xZ}.
It is not difficult to check that $H=FCH(G)= \{e\}\times \Z \simeq \Z$, so $K=ICC(G) \simeq \F_2=\langle a, b\rangle$, which is $C^*$-simple.
Let $\sigma \in Z^2(G,\T)$ be determined by $\mu$ and $\nu$ in $\T$ as in Example~\ref{F2xZ}.
Then $\sigma'=1$, so $C^*_r(H,\sigma') = C_r^*(\Z)$.
Moreover, choosing the section $s\colon K\to G$ given by $s(x) = (x,0)$, we get from subsection~\ref{subgroups} that
\[
C^*_r(G,\sigma) \simeq C_r^*(C_r^*(\Z), \F_2, \beta)\, ,
\]
where the action $\beta$ of $\F_2$ on $C_r^*(\Z)$ is untwisted and determined by
\[
\beta_x(\lambda(m)) = \overline{\mu}^{\ m\,o_a(x)}\,\overline{\nu}^{\ m\,o_b(x)}\,\lambda(m)
\]
for $x \in \F_2$ and $m \in \Z$,
where $o_a$ (resp.~$o_b$)$\colon\F_2 \to \Z$ denotes the homomorphism sending $a$ to $1$ and $b$ to $0$ (resp.\ sending $a$ to $0$ and $b$ to $1$).
Identifying $C_r^*(\Z)$ with $C(\T)$ via the Gelfand transform, we get that each $\beta_x$ is the $*$-automorphism of $C(\T)$ associated to the homeomorphism $\varphi_x$ of $\T$ given by
\[
\varphi_x (z) = \mu^{\,o_a(x)}\,\nu^{\,o_b(x)}\,z
\]
for $z\in \T$.
Hence, if at least one of $\mu$ and $\nu$ is nontorsion, we see that every orbit $\{\varphi_x(z) : x\in \F_2\}$ is dense in $\T$, so the action of $\F_2$ on $C^*_r(H,\sigma')=C_r^*(\Z)$ is minimal. We can therefore conclude that $(G, \sigma)$ is $C^*$-simple and has the unique trace property in this case, in accordance with what we found in Example~\ref{F2xZ}.
\end{example}

The next example shows that the class of solvable groups is not contained in $\K$,
and that the class of groups with exponential growth is neither contained in $\K_{C^*S}$ nor in $\K_{UT}$.
It also gives an example of an amenable ICC group $G$ satisfying $\emptyset \neq C^*S(G)=UT(G) \neq K(G)$.

\begin{example}\label{anosov}
$C^*$-simplicity of $(G,\sigma)$ when $G$ is a semidirect product of the form $\Z^n \rtimes_A \Z$ for some $A \in\op{GL}(n,\Z)$ is thoroughly discussed by Packer and Raeburn in \cite[Theorem~3.2]{PR} (see also subsection~\ref{aperiodic-autos} below, in particular Example~\ref{Zn-by-Z}). To make our point, it will suffice to consider a matrix
\[
A=\begin{bmatrix}a&b\\c&d\end{bmatrix}\in\op{GL}(2,\Z),
\]
and the action of $\Z$ on $\Z^2$ associated with $A$, that is,
\[
k \cdot \mathbf{x} = A^k \mathbf{x}
\]
for $k \in \Z$ and $\mathbf{x} \in \Z^2$.
Let $G = \Z^2 \rtimes_A \Z$ denote the corresponding semidirect product, which is clearly a solvable group.
Computations show that $G$ is ICC (and has exponential growth) if and only if $\lvert a+d \rvert > 1 + \op{det}A$.
This holds for example when $a=2$ and $b=c=d=1$.
Assuming this, and making use of \cite[Example~3.4]{PR}, we have that any $\sigma \in Z^2(G,\T)$ is similar to $\check\sigma_\theta$ for some $\theta \in [0, 1/2)$, where
\[
\check\sigma_\theta\big((\mathbf{x},k),(\mathbf{y},l)\big)=\exp 2\pi i \, \Big(\mathbf{x}^t\begin{bmatrix}0&\theta\\-\theta&0\end{bmatrix} A^k\mathbf{y}\Big)
\]
for $\mathbf{x}, \mathbf{y} \in \Z^2$ and $k, l \in \Z$.
Moreover,
\begin{equation}\label{theta-dec}
C^*_r(G,\sigma) \simeq C^*_r(\Z^2\rtimes_A\Z, \check\sigma_\theta) \simeq C_r^*\big(C^*_r(\Z^2,\sigma_\theta),\Z, \beta\big)\,,
\end{equation}
where \[
\sigma_\theta\big(\mathbf{x},\mathbf{y}\big)=\exp 2\pi i \, \Big(\mathbf{x}^t\begin{bmatrix}0&\theta\\-\theta&0\end{bmatrix} \mathbf{y}\Big)\,,
\]
the action $\beta\colon\Z\to \aut(C^*_r(\Z^2,\sigma_\theta))$ being determined by $\beta_k\big(\lambda_{\sigma_\theta}(\mathbf{x})\big) = \lambda_{\sigma_\theta}(A^k\mathbf{x})$ for $\mathbf{x}, \mathbf{y} \in \Z^2$ and $k \in \Z$.

Consider now the statements
\begin{itemize}
\item[(i)] $\theta$ is irrational,
\item[(ii)] $(G,\check\sigma_\theta)$ is $C^*$-simple,
\item[(iii)]$(G,\check\sigma_\theta)$ has the unique trace property.
\end{itemize}
Then these three statements are equivalent.
Indeed, (ii)~$\Rightarrow$~(i) follows by applying \cite[Theorem~3.2]{PR}. Using the decomposition \eqref{theta-dec}, one sees that the implication (i)~$\Rightarrow$~(iii) is a special case of \cite[Theorem~8]{Bed2} (and its proof). Finally, the implication (iii)~$\Rightarrow$~(ii) follows from Theorem~\ref{Murph} since $G$ is amenable.

However, as $G$ is ICC, $(G,\check\sigma_\theta)$ always satisfies Kleppner's condition, also when $\theta$ is rational.
So we see that $G$ does not belong to $\K_{C^*S}$, nor to $\K_{UT}$.
\end{example}

To deal with similar situations, the following somewhat curious notion may turn out to be useful.
Let us say that $(G,\sigma)$ satisfies \emph{condition~X} if there exists a normal subgroup $N$ of $G$ such that
\begin{itemize}
\item[(i)] $FC(G) \subset N$,
\item[(ii)] $G/N$ is FC-hypercentral,
\item[(iii)] for all $h \in N\setminus\{e\}$, there exists $g \in G$ such that $hg=gh$ and $\sigma(h,g) \neq \sigma(g,h)$.
\end{itemize}
Note that $FC(G) \subset FC(N)$ if and only if $FC(G) \subset N$.

\medskip

In general, condition~X implies Kleppner's condition, as can be seen by combining (i) and (iii).
Moreover, if $G$ is FC-hypercentral, then $(G,\sigma)$ satisfies condition~X if and only $(G,\sigma)$ satisfies Kleppner's condition. Indeed, if Kleppner's condition hold, then we may take $N=FC(G)$ to see that condition~X holds.

\begin{proposition}
Let $G$ be an amenable group and assume that $(G,\sigma)$ satisfies condition~X.
Then $(G,\sigma)$ is $C^*$-simple and has the unique trace property.
\end{proposition}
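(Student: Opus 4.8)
\emph{Set-up.} Since $G$ is amenable, Theorem~\ref{Murph} reduces the statement to showing that $(G,\sigma)$ has the unique trace property. So the plan is to fix an arbitrary tracial state $\phi$ on $A:=C^*_r(G,\sigma)$, with GNS triple $(\pi_\phi,\mathcal H_\phi,\xi_\phi)$, and to prove $\phi=\tau$. Let $N$ be a normal subgroup of $G$ witnessing condition~X and put $K=G/N$. For a subgroup $L$ of $G$, write $\sigma_L$ for the restriction of $\sigma$ to $L\times L$ and regard $C^*_r(L,\sigma_L)$ as a subalgebra of $A$ via $\lambda_{\sigma_L}(p)\mapsto\lambda_\sigma(p)$ (subsection~\ref{subgroups}), with canonical trace $\tau_L$; since $\{\lambda_\sigma(p):p\in L\}$ has dense linear span, to obtain $\phi|_{C^*_r(L,\sigma_L)}=\tau_L$ it suffices to check $\phi(\lambda_\sigma(p))=0$ for every $p\in L\setminus\{e\}$.

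\emph{Base case (uses condition~X(iii)).} First I would show $\phi|_{C^*_r(N,\sigma_N)}=\tau_N$. Given $h\in N\setminus\{e\}$, condition~X(iii) supplies $g\in G$ with $gh=hg$ and $\sigma(h,g)\neq\sigma(g,h)$; since $g\cdot h=h$, \eqref{sigma-conjugate} gives $\lambda_\sigma(g)\lambda_\sigma(h)\lambda_\sigma(g)^*=\widetilde\sigma(g,h)\,\lambda_\sigma(h)$ with $\widetilde\sigma(g,h)=\sigma(g,h)\overline{\sigma(h,g)}\neq1$, and applying $\phi$ together with traciality yields $\phi(\lambda_\sigma(h))=\widetilde\sigma(g,h)\,\phi(\lambda_\sigma(h))$, hence $\phi(\lambda_\sigma(h))=0$. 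I record a consequence I will reuse: whenever $L\le G$ and $\phi|_{C^*_r(L,\sigma_L)}=\tau_L$, the vectors $\{\pi_\phi(\lambda_\sigma(p))\xi_\phi:p\in L\}$ form an orthonormal system in $\mathcal H_\phi$, because $\langle\pi_\phi(\lambda_\sigma(p))\xi_\phi,\pi_\phi(\lambda_\sigma(p'))\xi_\phi\rangle$ equals a scalar of modulus one times $\phi(\lambda_\sigma(p'^{-1}p))=\tau_L(\lambda_\sigma(p'^{-1}p))=\delta_{p,p'}$.

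\emph{Main step: transfinite induction along the FC-hypercentral structure of $K$.} Let $\{e\}=\overline M_0\trianglelefteq\overline M_1\trianglelefteq\cdots\trianglelefteq\overline M_\mu=K$ be the upper FC-central series of $K$ (so $\overline M_{\alpha+1}/\overline M_\alpha=FC(K/\overline M_\alpha)$; it exhausts $K$ by condition~X(ii)) and let $M_\alpha\trianglelefteq G$ be its preimages, so $N=M_0\subseteq M_1\subseteq\cdots\subseteq M_\mu=G$ with $M_{\alpha+1}/M_\alpha\cong FC(G/M_\alpha)$. I claim $\phi|_{C^*_r(M_\alpha,\sigma_{M_\alpha})}=\tau_{M_\alpha}$ for all $\alpha$: the case $\alpha=0$ is the base case, and limit stages are immediate since $\bigcup_{\beta<\alpha}C^*_r(M_\beta,\sigma_{M_\beta})$ is dense in $C^*_r(M_\alpha,\sigma_{M_\alpha})$. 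For the successor step, assume the claim at $\alpha$ and take $m\in M_{\alpha+1}\setminus M_\alpha$. Its image in $G/M_\alpha$ lies in $FC(G/M_\alpha)$, so the preimage $G_\alpha\le G$ of the centralizer of that image has finite index in $G$; for $k\in G_\alpha$ one has $kmk^{-1}\in mM_\alpha$, and for $k,k'\in G_\alpha$ the element $(k'mk'^{-1})^{-1}(kmk^{-1})$ lies in $M_\alpha$, so by the orthonormality above (with $L=M_\alpha$) the vectors $\pi_\phi(\lambda_\sigma(kmk^{-1}))\xi_\phi$ attached to the distinct conjugates of $m$ by $G_\alpha$ are orthonormal; moreover traciality and \eqref{sigma-conjugate} give $\phi(\lambda_\sigma(kmk^{-1}))=\overline{\widetilde\sigma(k,m)}\,\phi(\lambda_\sigma(m))$, so all these conjugates satisfy $\lvert\phi(\lambda_\sigma(\cdot))\rvert=\lvert\phi(\lambda_\sigma(m))\rvert$. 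If $m$ had only finitely many $G_\alpha$-conjugates, its centralizer in $G_\alpha$, hence in $G$, would have finite index, forcing $m\in FC(G)\subseteq N\subseteq M_\alpha$ by condition~X(i), a contradiction; so there are infinitely many, and Bessel's inequality against $\xi_\phi$ forces $\phi(\lambda_\sigma(m))=0$. With the inductive hypothesis this gives $\phi|_{C^*_r(M_{\alpha+1},\sigma_{M_{\alpha+1}})}=\tau_{M_{\alpha+1}}$. At $\alpha=\mu$ we obtain $\phi=\tau$, so $(G,\sigma)$ has the unique trace property, and since $G$ is amenable Theorem~\ref{Murph} gives that $(G,\sigma)$ is $C^*$-simple.

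\emph{Main obstacle.} The delicate part is the organization of the induction and, inside the successor step, the mechanism ruling out that $m$ has only finitely many conjugates in its $M_\alpha$-coset --- this is precisely where condition~X(i) enters (condition~X(ii) being used only to make the series exhaust $G$, and condition~X(iii) only in the base case) --- combined with the $\ell^2$-argument that infinitely many pairwise orthogonal ``off-diagonal Fourier coefficients'' of equal modulus must all vanish. (Amenability itself is used only at the very end, through Theorem~\ref{Murph}.)
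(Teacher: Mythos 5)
Your proof is correct and follows essentially the same route as the paper: the paper's own proof simply refers to the inductive argument of \cite[Theorem~3.1]{BO} with $FC(G)$ replaced by $N$ as the base case (condition~X(iii) killing the trace on $N\setminus\{e\}$, then transfinite induction up the FC-central series of $G/N$ via the Bessel-inequality argument), and your write-up supplies exactly those details, with amenability and Theorem~\ref{Murph} yielding $C^*$-simplicity at the end.
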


\begin{proof}
The result is a generalization of \cite[Theorem~3.1]{BO}.
Instead of using $FC(G)$ as the ``base case'' in the inductive proof of this theorem,
we replace it by the (larger) normal subgroup $N$.
Then the same proof as in \cite{BO} will work,
provided that $G/N$ is FC-hypercentral and $N$ (and thus $G$) is amenable. We leave the details to the reader.
\end{proof}

This proposition seems potentially applicable when dealing with solvable groups and ``FC-hypercentral-by-FC-hypercentral'' groups.
For example, it may used it to show that (i) implies (ii) and (iii) in Example~\ref{anosov}:
choosing $N=\Z^2$, one readily checks that $(\Z^2\rtimes_A \Z, \check\sigma_\theta)$ satisfies condition~X whenever $\theta$ is irrational.

\section{On normal subgroups and freely acting automorphisms}\label{free}

Throughout this section, we assume that $H$ is a normal subgroup of $G$ and set $K = G/H$.
As before, the restriction of $\sigma \in Z^2(G,\T)$ to $H\times H$ will be denoted by $\sigma'$, and $\tau'$ will denote the canonical tracial state on $W^*(H, \sigma')$ (resp.~$C_r^*(H, \sigma')$). We recall from subsection~\ref{subgroups} that for each $g\in G$ there exists $\gamma_g\in \aut(W^*(H, \sigma'))$ satisfying
\[
\gamma_g\big(\lambda_{\sigma'}(h)\big)=\widetilde\sigma(g,h)\,\lambda_{\sigma'}(g\cdot h) \quad \text{ for all } h \in H\,.
\] We fix a section $s\colon K\to G$ for the canonical homomorphism $q$ from $G$ onto $K$ satisfying $s(e)=e$, and let $(\beta, \omega)$ denote the associated twisted action of $K$ on $W^*(H, \sigma'))$.
We otherwise freely use the notation introduced in subsection~\ref{subgroups}. 

Our main goal in this section is to provide a set of conditions on $G, H$ and $\sigma$ guaranteeing that $(G,\sigma)$ has the unique trace property, or is $C^*$-simple with the unique trace property (see Theorem~\ref{utp}). For the unique trace property, our plan is to invoke \cite[Proposition~9]{Bed4}, and our first task will therefore be to find a condition ensuring that $\gamma_k \in \aut(W^*(H,\sigma'))$ is freely acting
in the sense of Kallman \cite{Kal} (see also \cite{Str}) for each $k \in G\setminus H$.
 We will next show that $C^*$-simplicity may then be deduced in certain cases from various results, e.g.\ (the twisted version of) Kishimoto's theorem \cite[Theorem~3.1]{Kis}. 
 
 For the convenience of the reader, we recall that if $M$ is a von~Neumann algebra and $\alpha \in \aut(M)$, then $\alpha$ is called \emph{freely acting} (or~\emph{properly outer}) if the only element $T\in M$ satisfying $\alpha(S)T=TS$ for all $S \in M$ is $T=0$. Equivalently, $\alpha$ is freely acting if the restriction $\alpha_{|Mp}$ is outer for every nonzero central projection $p$ in $M$ satisfying $\alpha(p)=p$. We also recall that a twisted action $(\beta, \omega)$ of a group $K$ on $M$ is called \emph{freely acting} (or~\emph{properly outer}) if $\beta_k$ is freely acting for every $k \in K\setminus\{e\}$.

\begin{lemma}\label{k-central}
Let $T \in W^*(H,\sigma')$ and $k \in G$. Define $f_T \in \ell^2(H)$ by $f_T=T\delta_e$.
Then the following conditions are equivalent:
\begin{itemize}
\item[(i)] \, $\gamma_k(S)T=TS$ for all $S \in W^*(H,\sigma')$.
\item[(ii)] \, $\widetilde\sigma(k,s)\,\overline{\sigma(t,s)}\,\sigma\big(k\cdot s,(k\cdot s)^{-1}ts\big)\,f_T\big((k \cdot s)^{-1}ts\big)=f_T(t)$ for all $s,t \in H$.
\end{itemize}
\end{lemma}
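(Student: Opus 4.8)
The equivalence is really just an unwinding of the definitions, translating the operator equation $\gamma_k(S)T = TS$ into its action on the vector $\delta_e$ and then on a general $\delta_s$. First I would record the formulas for how everything acts on basis vectors. Recall $\lambda_{\sigma'}(h)\delta_{h'} = \sigma'(h,h')\delta_{hh'} = \sigma(h,h')\delta_{hh'}$ for $h,h' \in H$, and that $\gamma_k\bigl(\lambda_{\sigma'}(h)\bigr) = \widetilde\sigma(k,h)\,\lambda_{\sigma'}(k\cdot h)$. Also, since $T \in W^*(H,\sigma')$ commutes with the right regular representation $\rho_{\overline{\sigma'}}$ of $H$, it is determined by $f_T = T\delta_e$ via the convolution formula: for $s \in H$, one has $T\delta_s = T\rho_{\overline{\sigma'}}(s)\delta_e$, wait—more carefully, $\rho_{\overline{\sigma'}}(s^{-1})\delta_e = \overline{\sigma'}(e, s^{-1})\delta_{s^{-1}}$, so I should instead just compute $T\delta_s$ directly: writing $T$ as a $\sigma'$-convolution operator with kernel $f_T$, we get $(T\delta_s)(t) = f_T(ts^{-1})\,\sigma'(ts^{-1}, s)$ for $t \in H$ (this is the standard description of elements of $W^*(H,\sigma')$ as right-convolution operators; I would cite the preliminaries or verify it on a dense set of $T$'s of the form $\Lambda_{\sigma'}(f)$ and pass to the weak closure).

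**The two reductions.** The main step is to show that condition (i) is equivalent to its specialization at $S = \lambda_{\sigma'}(s)$ for all $s \in H$, and then to evaluate that specialized identity on $\delta_e$. The first reduction is immediate because $W^*(H,\sigma')$ is generated, as a von~Neumann algebra, by $\{\lambda_{\sigma'}(s) : s \in H\}$, and both sides of $\gamma_k(S)T = TS$ are weak-operator continuous and linear in $S$ (using that $\gamma_k$ is a normal $*$-automorphism); so (i) holds for all $S$ iff it holds for all $S = \lambda_{\sigma'}(s)$. The second reduction: an equality $AT\delta = TB\delta$ of bounded operators composed with $T$—no, I need to be careful, the equation $\gamma_k(\lambda_{\sigma'}(s))\,T = T\,\lambda_{\sigma'}(s)$ is an equality of operators, and since $T$ lies in $W^*(H,\sigma')$ and hence commutes with $\rho_{\overline{\sigma'}}(H)$, while $\lambda_{\sigma'}(s)$ and $\gamma_k(\lambda_{\sigma'}(s)) = \widetilde\sigma(k,s)\lambda_{\sigma'}(k\cdot s)$ also commute with $\rho_{\overline{\sigma'}}(H)$, both sides commute with the right regular representation; two such operators agree iff they agree on $\delta_e$ (since $\{\rho_{\overline{\sigma'}}(h)\delta_e\}$ is total). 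Hence (i) is equivalent to: $\widetilde\sigma(k,s)\,\lambda_{\sigma'}(k\cdot s)\,T\delta_e = T\,\lambda_{\sigma'}(s)\,\delta_e$ for all $s \in H$. Now $T\delta_e = f_T$, and $\lambda_{\sigma'}(s)\delta_e = \sigma'(s,e)\delta_s = \delta_s$, so the right-hand side is $T\delta_s$, whose value at $t \in H$ is $f_T(ts^{-1})\sigma(ts^{-1},s)$ by the convolution description. The left-hand side at $t$ is $\widetilde\sigma(k,s)\bigl(\lambda_{\sigma'}(k\cdot s) f_T\bigr)(t) = \widetilde\sigma(k,s)\,\sigma\bigl(k\cdot s,\ (k\cdot s)^{-1}t\bigr)\,f_T\bigl((k\cdot s)^{-1}t\bigr)$.

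**Matching up.** The last task is purely notational: in condition (ii) the variable is named differently. Equating LHS at $t$ with RHS at $t$ gives
\[
\widetilde\sigma(k,s)\,\sigma\bigl(k\cdot s,\ (k\cdot s)^{-1}t\bigr)\,f_T\bigl((k\cdot s)^{-1}t\bigr) = f_T(ts^{-1})\,\sigma(ts^{-1},s)
\]
for all $s,t \in H$. Replacing $t$ by $ts$ (a bijection of $H$ for each fixed $s$) turns the right-hand side into $f_T(t)\,\sigma(t,s)$ and the left-hand side into $\widetilde\sigma(k,s)\,\sigma\bigl(k\cdot s,\ (k\cdot s)^{-1}ts\bigr)\,f_T\bigl((k\cdot s)^{-1}ts\bigr)$; dividing both sides by $\sigma(t,s)$ (a unit modulus scalar) and multiplying by $\overline{\sigma(t,s)}$ yields exactly the identity in (ii). Conversely every step is reversible. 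The only place demanding genuine (but routine) care is the description of a general element of $W^*(H,\sigma')$ as a right $\sigma'$-convolution operator with $\ell^2$-kernel $f_T = T\delta_e$, together with the formula $(T\delta_s)(t) = \sigma(ts^{-1},s)\,f_T(ts^{-1})$; I expect this bookkeeping—getting every cocycle argument in the right slot—to be the main obstacle, and I would handle it by first checking the formula for $T = \lambda_{\sigma'}(h)$ and extending by linearity and weak continuity, exactly as for the commutant description of twisted group von~Neumann algebras.
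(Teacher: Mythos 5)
Your proof is correct and follows essentially the same route as the paper's: reduce (i) to the generators $\lambda_{\sigma'}(s)$, use that $\delta_e$ is separating (equivalently, that both sides commute with $\rho_{\overline{\sigma'}}(H)$), and then match cocycle factors. The only cosmetic difference is that the paper absorbs the right translation by composing with $\rho_{\overline{\sigma'}}(s)$ and finishes with the cocycle identity, whereas you evaluate $T\delta_s$ via the right-convolution kernel formula and perform the substitution $t\mapsto ts$; these are the same computation in different clothing.
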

\begin{proof}
Since $W^*(H,\sigma')=\lambda_{\sigma'}(H)''$, it is clear that (i) holds if and only if
\[
\gamma_k\big(\lambda_{\sigma'}(s)\big)\,T=T\,\lambda_{\sigma'}(s) \quad \text{ for all } s \in H.
\]
Hence, since $\delta_e$ is a separating vector for $W^*(H,\sigma')$ and
\[
\lambda_{\sigma'}(s)\rho_{\overline{\sigma'}}(s)\delta_e=\rho_{\overline{\sigma'}}(s)\lambda_{\sigma'}(s)\delta_e=\delta_e \quad \text{ for all } s \in H,
\]
(i) is equivalent with
\begin{equation}\label{freelyact}
\gamma_k(\lambda_{\sigma'}(s))\, \rho_{\overline{{\sigma'}}}(s)\,T\delta_e=T\delta_e \quad \text{ for all } s \in H.
\end{equation}
Let $t \in H$. Evaluating the left hand side of equation \eqref{freelyact} at $t$ gives
\[
\begin{split}
& \Big(\gamma_k\big(\lambda_{\sigma'}(s)\big)\,\rho_{\overline{{\sigma'}}}(s)\,f_T\Big)(t) \\
&\quad = \Big(\widetilde\sigma(k,s)\, \lambda_{\sigma'}(k\cdot s)\, \rho_{\overline{{\sigma'}}}(s)\,f_T\Big)(t) \\
&\quad = \widetilde\sigma(k,s)\, \sigma\big(k\cdot s, (k\cdot s)^{-1}t\big)\,\Big(\rho_{\overline{{\sigma'}}}(s)\, f_T\Big)\big((k\cdot s)^{-1}\,t\big) \\
&\quad = \widetilde\sigma(k,s)\,\sigma\big(k\cdot s,(k\cdot s)^{-1}t\big)\,\overline{\sigma((k\cdot s)^{-1}t,s)}\, f_T\big((k\cdot s)^{-1}ts\big)\,,
\end{split}
\]
and (i) is now seen to be equivalent to (ii) by making use of \eqref{coceq}.

\end{proof}

Let $g \in G$. We let $C_H(g)$ denote the \emph{$H$-conjugacy class of $g$ in $G$}, that is,
\[
C_H(g) = \{ sgs^{-1} : s \in H\}.
\]
Moreover, if $k \in G$, we define the \emph{$(k,H)$-conjugacy class of $g$ in $G$} by
\[
C_H^{\,k}(g)=\{ (k\cdot s) \, g \, s^{-1} : s \in H\}.
\]
This class is nothing but the equivalence class of $g$ w.r.t.\ the equivalence relation on $G$ defined by $g' \sim_k g$ whenever $g'=(k\cdot s) \, g \, s^{-1}$ for some $ s \in H$. Clearly, we have $C_H^k(g) \subset H$ if and only if $g \in H$.
\medskip
We note that $C_H^{\,k}(g) = k \, C_H(k^{-1}g)$. This gives
\begin{equation*}
\lvert C_H^{\,k}(g) \rvert = \lvert C_H(k^{-1}g) \rvert \leq \lvert C_G(k^{-1}g) \rvert\,.
\end{equation*}

We will also need the following definitions:

\begin{definition}
Let $g \in G$. We say that $g$ is \emph{$\sigma$-regular w.r.t.\ $H$} if \[\sigma(g,s)=\sigma(s,g)\] whenever $s \in H$ commutes with $g$.
\end{definition}
\begin{definition}
Let $t \in H$ and $k \in G$. We say that $t$ is \emph{$\sigma$-regular w.r.t.\ $(k,H)$} if \[\sigma(k^{-1}t,s)=\sigma(s,k^{-1}t)\] whenever $s \in H$ and $k^{-1}t s = s k^{-1}t$ (that is, $(k\cdot s)t = ts$).
\end{definition}

Clearly, for $k \in G$ and $t \in H$, we have
\[
\text{$k$ is $\sigma$-regular w.r.t.\ $G$ } \Longrightarrow \text{ $k$ is $\sigma$-regular w.r.t.\ $H$}
\]
and
\[
\text{$k^{-1}t$ is $\sigma$-regular w.r.t.\ $H$ } \Longleftrightarrow \text{ $t$ is $\sigma$-regular w.r.t.\ $(k,H)$}.
\]

\begin{lemma}\label{reg-class}
The following hold:
\begin{itemize}\itemsep4pt
\item[(i)] Let $x \in G$ and $y \in C_H(x)$. \\
If $x$ is $\sigma$-regular w.r.t.\ $H$, then $y$ is $\sigma$-regular w.r.t.\ $H$.
\item[(ii)] Let $k \in G$\,, $t \in H$ and $t' \in C_H^{\,k}(t)$. \\
If $t$ is $\sigma$-regular w.r.t.\ $(k,H)$, then $t' $ is $\sigma$-regular w.r.t.\ $(k,H)$.
\end{itemize}
\end{lemma}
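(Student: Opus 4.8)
The plan is to reduce both parts to a direct verification using the definition of $\sigma$-regularity w.r.t.\ $H$ (resp.\ w.r.t.\ $(k,H)$) together with the cocycle identity \eqref{coceq}. Both parts have essentially the same structure: one starts with an element of a conjugacy-type class, an element of $H$ commuting with it, and must massage the cocycle values $\sigma(\cdot,\cdot)$ using \eqref{coceq} to transfer the regularity hypothesis from the representative $x$ (resp.\ $t$) to the conjugate $y$ (resp.\ $t'$).

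For part (i): write $y = sxs^{-1}$ with $s \in H$, and suppose $u \in H$ commutes with $y$. Then $s^{-1}us \in H$ commutes with $x$, so by hypothesis $\sigma(x, s^{-1}us) = \sigma(s^{-1}us, x)$. The task is to rewrite $\sigma(y,u) = \sigma(sxs^{-1}, u)$ and $\sigma(u,y)$ in terms of cocycle values involving $x$ and $s^{-1}us$, using repeated applications of \eqref{coceq} to split products like $\sigma(sxs^{-1},u)$, and then to cancel the resulting terms against each other and against the known equality. This is the standard computation showing $\sigma$-regularity is a conjugation-invariant notion (it is done in \cite{Kle, Pa, Om} for ordinary conjugacy classes in $G$); here the only difference is that the conjugating element $s$ is constrained to lie in $H$, which is harmless since we are only conjugating and $H \le G$. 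In fact, since $C_H(x) \subset C_G(x)$ and the hypothesis ``$x$ is $\sigma$-regular w.r.t.\ $H$'' is weaker than ``$x$ is $\sigma$-regular w.r.t.\ $G$,'' one cannot simply quote the $G$-version; one must check that the commuting element $u$ witnessing a failure of regularity for $y$ produces the element $s^{-1}us \in H$ witnessing a failure for $x$ — which it does, since conjugation by $s \in H$ preserves $H$. So the argument genuinely goes through at the level of $H$.

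For part (ii): recall the stated equivalence ``$t$ is $\sigma$-regular w.r.t.\ $(k,H)$'' $\Longleftrightarrow$ ``$k^{-1}t$ is $\sigma$-regular w.r.t.\ $H$.'' Given $t' \in C_H^{\,k}(t)$, i.e.\ $t' = (k\cdot s)\,t\,s^{-1}$ for some $s \in H$, compute $k^{-1}t' = k^{-1}(ksk^{-1})t s^{-1} = s k^{-1} t s^{-1} = s(k^{-1}t)s^{-1}$. Thus $k^{-1}t' \in C_H(k^{-1}t)$, the ordinary $H$-conjugacy class of $k^{-1}t$ in $G$. Now apply part (i) with $x := k^{-1}t$ and $y := k^{-1}t'$: since $k^{-1}t$ is $\sigma$-regular w.r.t.\ $H$ by hypothesis (via the equivalence), part (i) gives that $k^{-1}t'$ is $\sigma$-regular w.r.t.\ $H$, which by the equivalence again means $t'$ is $\sigma$-regular w.r.t.\ $(k,H)$. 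So (ii) is a formal consequence of (i) once one observes the clean identity $k^{-1}\bigl((k\cdot s)\,t\,s^{-1}\bigr) = s\,(k^{-1}t)\,s^{-1}$, i.e.\ that left-translation by $k^{-1}$ carries the $(k,H)$-class of $t$ bijectively onto the $H$-class of $k^{-1}t$ (this is precisely the remark $C_H^{\,k}(g) = k\,C_H(k^{-1}g)$ already recorded before the lemma).

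The main obstacle is purely bookkeeping: getting the cocycle manipulations in part (i) right. One must be careful about normalization $\sigma(g,e)=\sigma(e,g)=1$ and the precise form of \eqref{coceq} when expanding $\sigma(sxs^{-1},u)$ — there is a temptation to conjugate one factor at a time, and each step introduces several cocycle terms that must ultimately telescope. A clean way to organize it is to use the identity \eqref{sigma-conjugate}, namely $\lambda_\sigma(s)\lambda_\sigma(x)\lambda_\sigma(s)^* = \widetilde\sigma(s,x)\,\lambda_\sigma(sxs^{-1})$, to express everything in terms of the antisymmetrized cocycle $\widetilde\sigma$ and then observe that the commutation relations in $C^*_r(G,\sigma)$ already encode $\sigma$-regularity; alternatively one can argue entirely combinatorially. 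Either way the verification is routine, and no genuinely new idea is needed beyond the translation identity used to deduce (ii) from (i).
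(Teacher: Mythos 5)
Your proposal is correct, and part (ii) is word for word the paper's argument: the identity $k^{-1}\bigl((k\cdot s)\,t\,s^{-1}\bigr)=s\,(k^{-1}t)\,s^{-1}$ reduces (ii) to (i) via the stated equivalence between $\sigma$-regularity of $t$ w.r.t.\ $(k,H)$ and of $k^{-1}t$ w.r.t.\ $H$. For part (i) you correctly isolate the one non-routine point — that the conjugator $s$ lies in $H$, so the witness $s^{-1}us$ lands back in $H$ and the hypothesis on $x$ applies — but you leave the cocycle verification itself as ``routine,'' whereas the paper grinds it out by hand over several lines using \eqref{coceq}. Your suggested alternative organization via \eqref{sigma-conjugate} does close this gap cleanly and is arguably tidier than the paper's computation: an element $g$ is $\sigma$-regular w.r.t.\ $H$ precisely when $\widetilde\sigma(u,g)=1$ for every $u\in H$ commuting with $g$, i.e.\ when $\lambda_\sigma(u)\lambda_\sigma(g)\lambda_\sigma(u)^*=\lambda_\sigma(g)$ for all such $u$; writing $\lambda_\sigma(u)\lambda_\sigma(s)=c\,\lambda_\sigma(s)\lambda_\sigma(s^{-1}us)$ with $c\in\T$ and conjugating $\lambda_\sigma(y)=\overline{\widetilde\sigma(s,x)}\,\lambda_\sigma(s)\lambda_\sigma(x)\lambda_\sigma(s)^*$ by $\lambda_\sigma(u)$, the inner factor $\lambda_\sigma(s^{-1}us)\lambda_\sigma(x)\lambda_\sigma(s^{-1}us)^*$ collapses to $\lambda_\sigma(x)$ by the hypothesis on $x$, and the unimodular $c$ cancels, giving $\widetilde\sigma(u,y)=1$. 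So the two routes prove the same telescoping identity, the paper's purely combinatorially and yours by letting the regular representation do the bookkeeping; no idea is missing from your write-up.
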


\begin{proof}
(i) Assume that $x$ is $\sigma$-regular w.r.t.\ $H$. Write $y=rxr^{-1}$ for some $r \in H$, and assume $ys=sy$ for some $s \in H$.
We have to show that $\sigma(y,s)=\sigma(s,y)$.

\medskip
Using the cocycle identity \eqref{coceq} twice, one readily checks that
\[
\sigma(s,y)\overline{\sigma(y,s)} = \sigma(y,r) \sigma(s,rx) \overline{\sigma(y,sr)}\,\overline{\sigma(s,r)}.
\]
Now, as $xr^{-1}sr=r^{-1}srx$ and $r^{-1}sr \in H$, the $\sigma$-regularity of $x$ w.r.t.\ $H$ gives that $\sigma(x,r^{-1}sr)=\sigma(r^{-1}sr,x)$. Using this, some further cocycle computations give that
\[
\sigma(y,sr)=\overline{\sigma(rx,r^{-1})}\, \overline{\sigma(r,x)}\, \sigma(r,r^{-1}sr)\sigma(sr,x)\sigma(r^{-1},sr).
\]
Thus, we get
\[
\begin{split}
& \sigma(s,y)\overline{\sigma(y,s)} \\
& \quad =\sigma(y,r) \sigma(s,rx)
\sigma(rx,r^{-1})\sigma(r,x)\overline{\sigma(r,r^{-1}sr)}\, \overline{\sigma(sr,x)}\, \overline{\sigma(r^{-1},sr)}\,\overline{\sigma(s,r)}\\
& \quad =
\overline{\sigma(s,r)}\, \overline{\sigma(sr,x)}\sigma(s,rx)\sigma(r,x)
\cdot \sigma(rx,r^{-1})\sigma(y,r)
\cdot \overline{\sigma(r,r^{-1}sr)} \,\overline{\sigma(r^{-1},sr)} \\
& \quad = 1 \cdot \sigma(r^{-1},r) \cdot \overline{\sigma(r,r^{-1})} = 1\,.
\end{split}
\] 

(ii) Assume $t$ is $\sigma$-regular w.r.t.\ $(k,H)$.
Then $x:=k^{-1}t$ is $\sigma$-regular w.r.t.\ $H$ and $t' = k y$ for some $y \in C_H(x)$. So (i) gives that $y$ is $\sigma$-regular w.r.t.\ $H$. Hence $t' = k y$ is $\sigma$-regular w.r.t.\ $(k,H)$, as desired.
\end{proof}

Lemma~\ref{reg-class} shows that if some $H$-conjugacy class contains an element which is $\sigma$-regular w.r.t.\ $H$, then all its elements are also $\sigma$-regular w.r.t.\ $H$; we will therefore call such a $H$-conjugacy class for $\sigma$-regular.

This lemma also shows that if some $(k,H)$-conjugacy class in $H$ contains an element which is $\sigma$-regular w.r.t.\ $(k,H)$, then all its elements are also $\sigma$-regular w.r.t.\ $(k,H)$; we will therefore say that such a $(k,H)$-conjugacy class in $H$ is $\sigma$-regular.

\begin{definition}
The triple $(G,H,\sigma)$ is said to satisfy \emph{the relative Kleppner condition} if,
for every $k \in G\setminus H$, all $\sigma$-regular $(k,H)$-conjugacy classes in $H$ are infinite, that is, we have:

\begin{itemize}
\item[(1)] $\lvert C_H^{\,k}(t) \rvert = \infty$ whenever $k \in G \setminus H$, $t \in H$ and $C_H^{\,k}(t)$ is $\sigma$-regular.
\end{itemize}
As is easily checked, this is equivalent to:
\begin{itemize}
\item[(2)] $\lvert C_H(g) \rvert = \infty$ whenever $g \in G \setminus H$ and $C_H(g)$ is $\sigma$-regular.
\end{itemize}
\end{definition}

\begin{remark}\label{relK}
\item[a)] If $H=G$, then the relative Kleppner condition holds trivially. In the opposite direction,
if $H=\{e\}$, then the relative Kleppner condition never holds, as immediately follows from (2).
\smallskip
\item[b)] $(G,H, 1)$ satisfies the relative Kleppner condition if and only if $\lvert C_H^{\, k}(t) \rvert = \infty$ whenever $k \in G \setminus H$ and $t \in H$, if and only if $\lvert C_H(g) \rvert = \infty$ whenever $g \in G \setminus H$. In particular, it follows that $(G,H, \sigma)$ satisfies the relative Kleppner condition whenever $(G,H, 1)$ satisfies the relative Kleppner condition.
\smallskip
\item[c)] Assume that $C_H(g)$ is finite for all $g \in G \setminus H$. For instance, this holds when $H$ is central or finite. Then $(G,H,\sigma)$ satisfies the relative Kleppner condition if and only if there does not exist any $\sigma$-regular element in $G\setminus H$.
\smallskip
\item[d)] Suppose that $(G,H,\sigma)$ satisfies the relative Kleppner condition and that $H'$ is a normal subgroup of $G$ containing $H$.
Then $(G,H',\sigma)$ satisfies the relative Kleppner condition.

Indeed, let $g \in G \setminus H' \subset G \setminus H$ and suppose $\sigma(g,h)=\sigma(h,g)$ whenever $gh=hg$ and $h \in H'$. Then $\sigma(g,h)=\sigma(h,g)$ whenever $gh=hg$ and $h \in H$, so $\lvert C_H(g) \rvert = \infty$. Hence, $\lvert C_{H'}(g) \rvert \geq \lvert C_H(g) \rvert = \infty$.
\smallskip
\item[e)] We have that $(G,\sigma)$ satisfies Kleppner's condition and, at the same time, $(G,H,\sigma)$ satisfies the relative Kleppner condition
if (and only if) the following two conditions hold:
\begin{itemize}
\item[(i)] $\lvert C_G(h) \rvert = \infty$ whenever $h \in H \setminus \{e\}$ and $C_G(h)$ is $\sigma$-regular,

\smallskip \item[(ii)] $\lvert C_H(g)\rvert = \infty$ whenever $g \in G \setminus H$ and $C_H(g)$ is $\sigma$-regular.
\end{itemize}

Indeed, assume that (i) and (ii) hold. In particular, $(G,H,\sigma)$ satisfies the relative Kleppner condition. Consider $g\in G\setminus H$ such that $C_G(g)$ is $\sigma$-regular. Then $C_H(g)$ is $\sigma$-regular. Thus, using (ii), we get $\lvert C_G(g)\rvert \geq \lvert C_H(g)\rvert = \infty$. Together with (i), this shows that $(G,\sigma)$ satisfies Kleppner's condition. (The converse assertion is trivial).
\end{remark}

\begin{proposition}\label{freeact}
Assume that $(G,H,\sigma)$ satisfies the relative Kleppner condition.
Then $\gamma_k$ is freely acting for every $k \in G \setminus H$.
Moreover, the twisted action $(\beta, \omega)$ of $K$ on $W^*(H, \sigma')$ is freely acting.
\end{proposition}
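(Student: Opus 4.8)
The plan is to reduce the second assertion to the first, and to prove the first by contradiction, working with the vector $f_T=T\delta_e$. For the reduction: since $\beta_k=\gamma_{s(k)}$ and $s(k)\in G\setminus H$ for every $k\in K\setminus\{e\}$ (because $q(s(k))=k\neq e$), once we know that each $\gamma_g$ with $g\in G\setminus H$ is freely acting, it follows immediately that every $\beta_k$ with $k\neq e$ is freely acting, i.e.\ that $(\beta,\omega)$ is freely acting. So I would fix $k\in G\setminus H$, take $T\in W^*(H,\sigma')$ with $\gamma_k(S)T=TS$ for all $S\in W^*(H,\sigma')$, and aim to show $T=0$. Since $\delta_e$ is separating for $W^*(H,\sigma')$, this is the same as showing that $f_T:=T\delta_e\in\ell^2(H)$ vanishes.

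The first step is to invoke Lemma~\ref{k-central}, which rewrites the hypothesis on $T$ as the pointwise identity
\[
\widetilde\sigma(k,s)\,\overline{\sigma(t,s)}\,\sigma\big(k\cdot s,(k\cdot s)^{-1}ts\big)\,f_T\big((k\cdot s)^{-1}ts\big)=f_T(t)
\]
for all $s,t\in H$. Taking absolute values removes all the phases and yields $\lvert f_T((k\cdot s)^{-1}ts)\rvert=\lvert f_T(t)\rvert$ for all $s,t\in H$. Since $(k\cdot s)^{-1}ts=(k\cdot s^{-1})\,t\,(s^{-1})^{-1}$ runs over the whole $(k,H)$-conjugacy class $C_H^{\,k}(t)$ as $s$ runs over $H$, this says precisely that $\lvert f_T\rvert$ is constant on each $(k,H)$-conjugacy class in $H$. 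Now assume $f_T\neq 0$ and pick $t\in H$ with $f_T(t)\neq 0$; then $\lvert f_T\rvert$ equals the strictly positive constant $\lvert f_T(t)\rvert$ on $C_H^{\,k}(t)$, so square-summability of $f_T$ forces $C_H^{\,k}(t)$ to be finite. By the relative Kleppner condition (form~(1) in the definition), a finite $(k,H)$-conjugacy class cannot be $\sigma$-regular, so $t$ is not $\sigma$-regular w.r.t.\ $(k,H)$; that is, there exists $s\in H$ with $(k\cdot s)\,t=t\,s$ (equivalently, $k^{-1}t$ commutes with $s$) and $\sigma(k^{-1}t,s)\neq\sigma(s,k^{-1}t)$.

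The endgame is to feed this particular pair $(s,t)$ back into the identity from Lemma~\ref{k-central}. Since $(k\cdot s)t=ts$ we have $(k\cdot s)^{-1}ts=t$, so the identity degenerates to $\widetilde\sigma(k,s)\,\overline{\sigma(t,s)}\,\sigma(k\cdot s,t)\,f_T(t)=f_T(t)$, and as $f_T(t)\neq 0$ this gives $\widetilde\sigma(k,s)\,\overline{\sigma(t,s)}\,\sigma(k\cdot s,t)=1$. I would then show that this scalar equals $\overline{\widetilde\sigma(k^{-1}t,s)}$, where $\widetilde\sigma(k^{-1}t,s)=\sigma(k^{-1}t,s)\,\overline{\sigma(s,k^{-1}t)}$ because $k^{-1}t$ commutes with $s$; this contradicts $\sigma(k^{-1}t,s)\neq\sigma(s,k^{-1}t)$ and completes the proof. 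To identify the scalar I would compute in $\mathcal{B}(\ell^2(G))$: using \eqref{sigma-conjugate} and $(k\cdot s)t=ts$, one checks that $\lambda_\sigma(k)\lambda_\sigma(s)\lambda_\sigma(k)^{*}=c\,\lambda_\sigma(t)\lambda_\sigma(s)\lambda_\sigma(t)^{*}$ with $c=\widetilde\sigma(k,s)\,\overline{\sigma(t,s)}\,\sigma(k\cdot s,t)$; then, conjugating both sides by $\lambda_\sigma(k)^{*}$ and using that $\lambda_\sigma(k)^{*}\lambda_\sigma(t)$ is a unimodular scalar multiple of $\lambda_\sigma(k^{-1}t)$, the left-hand side collapses to $\lambda_\sigma(x)\lambda_\sigma(s)\lambda_\sigma(x)^{*}$ with $x=k^{-1}t$; finally, since $x$ commutes with $s$, both sides are scalar multiples of $\lambda_\sigma(s)$, which forces $c\,\widetilde\sigma(x,s)=1$, i.e.\ $c=\overline{\widetilde\sigma(x,s)}$. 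The main obstacle is exactly this last step --- correctly threading the cocycle identity and the conjugation computations to pin down the phase $c$; everything else is routine bookkeeping with $\sigma$-regularity (Lemma~\ref{reg-class}), square-summability, and the definitions.
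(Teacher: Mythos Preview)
Your proof is correct and follows essentially the same route as the paper: invoke Lemma~\ref{k-central}, deduce that $\lvert f_T\rvert$ is constant on each $(k,H)$-conjugacy class, and then use the relative Kleppner condition together with the phase identity to force $f_T=0$; your contradiction argument is simply a repackaging of the paper's case analysis. The one genuine difference is your verification of the scalar identity $\widetilde\sigma(k,s)\,\overline{\sigma(t,s)}\,\sigma(k\cdot s,t)=\overline{\sigma(k^{-1}t,s)}\,\sigma(s,k^{-1}t)$ via operator conjugation in $\mathcal{B}(\ell^2(G))$, which is a clean alternative to the paper's ``detailed but routine cocycle computations'' (note a small slip in your write-up: after conjugating by $\lambda_\sigma(k)^*$ it is the \emph{right}-hand side that becomes $c\,\lambda_\sigma(x)\lambda_\sigma(s)\lambda_\sigma(x)^*$ while the left-hand side becomes $\lambda_\sigma(s)$, but the conclusion $c\,\widetilde\sigma(x,s)=1$ is unaffected).
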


\begin{proof}
Let $k \in G\setminus H$ and suppose $T\in W^*(H,\sigma')$ satisfies $\gamma_k(T)S = ST $ for all $S\in W^*(H,\sigma')$. Using (ii) from Lemma~\ref{k-central}, we get that
\[
\lvert f_T\rvert \big((k\cdot s) t s^{-1}\big) = \lvert f_T\rvert(t)
\]
for all $s, t \in H$. This means that $\lvert f_T\rvert$ is constant on each $(k,H)$-conjugacy class $C_H^k(t)$.

Let $t \in H$. Assume first that $C_H^k(t)$ is $\sigma$-regular. Since $(G,H,\sigma)$ satisfies the relative Kleppner condition, we have $\lvert C_H^k(t)\rvert=\infty $. As $f_T \in \ell^2(H)$, we get that $\lvert f_T\rvert$ is constantly equal to zero on $C_H^k(t)$. Hence, $f_T=0$ on $C_H^k(t)$.

Assume now that $C_H^k(t)$ is not $\sigma$-regular. So there exists $s \in H$ such that
\begin{equation}\label{sig-eq1}
(k\cdot s)t = ts
\end{equation}
and
\begin{equation}\label{sig-eq2}
\overline{\sigma(k^{-1}t, s)} \sigma(s, k^{-1}t) \neq 1\,.
\end{equation}
Using equation \eqref{sig-eq1} and (ii) in Lemma~\ref{k-central}, we get
\begin{equation} \label{sigma-eq1}
\widetilde\sigma(k,s)\,\overline{\sigma(t,s)}\,\sigma\big(k\cdot s,t\big)\,f_T(t)=f_T(t)\,.
\end{equation}
Some detailed but routine cocycle computations give that
\[
\widetilde\sigma(k,s)\,\overline{\sigma(t,s)}\,\sigma\big(k\cdot s,t\big) = \overline{\sigma(k^{-1}t,s)}\sigma(s,k^{-1}t)\,.
\]
Thus, using \eqref{sig-eq2}, we get 
\[
\widetilde\sigma(k,s)\,\overline{\sigma(t,s)}\,\sigma\big(k\cdot s,t\big) \neq 1\,,
\]
so we conclude from \eqref{sigma-eq1} that $f_T(t)=0$. As $\lvert f_T\rvert$ is constant on $C_H^k(t)$, we get that $f_T=0$ on $C_H^k(t)$.

Altogether, we have shown that $f_T=0$ on each $(k,H)$-conjugacy class in $H$. Since $H$ is the union of all such classes, it follows that $f_T=0$ on the whole of $H$. As $\delta_e$ is separating for $W^*(H,\sigma')$, we get that $T=0$. This proves that $\gamma_k$ is freely acting, as desired.

Finally, recall that $\beta_k = \gamma_{s(k)}$ for each $k\in K$, where $s\colon K\to G$ denotes the chosen section for the quotient map from $G$ onto $K$. Since $s(k) \in G\setminus H$ for every $k \in K\setminus\{e\}$, it follows that $(\beta, \omega)$ is freely acting.
\end{proof}

\begin{remark}
It can be shown that 
if $\gamma_k$ is freely acting for every $k \in G \setminus H$, then $(G,H,\sigma)$ satisfies the relative Kleppner condition.
As we will not need this fact, we leave this as an exercise for the reader.
\end{remark}

\begin{theorem}\label{utp}
Assume that $(G,H,\sigma)$ satisfies the relative Kleppner condition and that $\tau'$ is the unique $K$-invariant tracial state of $C_r^*(H, \sigma')$. Then $(G,\sigma)$ has the unique trace property.

Assume, in addition, that at least one of the following two conditions is satisfied:
\begin{itemize}
\item[(a)] $G$ is amenable,
\item[(b)] $G$ is exact and $C^*_r(G,\sigma)$ has stable rank one.
\end{itemize}
Then $(G,\sigma)$ is $C^*$-simple.
\end{theorem}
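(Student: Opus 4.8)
The plan is to combine Proposition~\ref{freeact} with the crossed product decomposition from subsection~\ref{subgroups} and known results on twisted crossed products. Recall that $C_r^*(G,\sigma) \simeq C_r^*\big(C_r^*(H,\sigma'), K, \beta^r, \omega\big)$, and that $\tau = \tau' \circ E$ where $E$ is the canonical conditional expectation onto $C_r^*(H,\sigma')$. For the unique trace property, the strategy is to invoke \cite[Proposition~9]{Bed4}: one must show that any tracial state $\phi$ of $C_r^*(G,\sigma)$ restricts to a $K$-invariant tracial state on $C_r^*(H,\sigma')$ (immediate, since each $\beta^r_k$ is implemented by the unitary $\lambda_\sigma(s(k)) \in C_r^*(G,\sigma)$), hence equals $\tau'$ by hypothesis, and then that $\phi = \tau' \circ E = \tau$. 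The passage from $\phi_{|C_r^*(H,\sigma')} = \tau'$ to $\phi = \tau$ is exactly where freeness of the automorphisms enters: by Proposition~\ref{freeact}, each $\gamma_{s(k)} = \beta_k$ (for $k \in K\setminus\{e\}$) is freely acting on $W^*(H,\sigma')$, so the associated twisted action of $K$ on $W^*(H,\sigma')$ is freely acting, and \cite[Proposition~9]{Bed4} then yields that $\tau = \tau'\circ E$ is the unique tracial state of $C_r^*(G,\sigma)$ extending $\tau'$. This gives the first assertion.

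For the second assertion, assume in addition (a) or (b). Under either hypothesis, Theorem~\ref{Murph} applies: if $G$ is amenable, or if $G$ is exact and $C_r^*(G,\sigma)$ has stable rank one, then $(G,\sigma)$ is $C^*$-simple whenever it has the unique trace property. Since we have just established that $(G,\sigma)$ has the unique trace property, $C^*$-simplicity follows immediately. So the second part is essentially a direct appeal to Theorem~\ref{Murph} and requires no further work beyond citing it.

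The main obstacle is the first part, and specifically verifying that the hypotheses of \cite[Proposition~9]{Bed4} are met — chiefly, that the freeness established in Proposition~\ref{freeact} is precisely the form of freeness needed there (proper outerness of the twisted action $(\beta,\omega)$ on the von~Neumann algebra $W^*(H,\sigma')$, rather than merely on the C$^*$-algebra $C_r^*(H,\sigma')$), and that the conditional expectation $E$ and the trace $\tau' = \tau_{|C_r^*(H,\sigma')}$ fit the framework there. The key point is that Proposition~\ref{freeact} was stated at the von~Neumann algebra level exactly so that this matches; one should also note that $\tau'$ being the unique $K$-invariant trace on $C_r^*(H,\sigma')$ is the hypothesis that feeds directly into \cite[Proposition~9]{Bed4}. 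Once the bookkeeping matching our setup to the hypotheses of \cite[Proposition~9]{Bed4} is in place, both conclusions follow without additional computation.
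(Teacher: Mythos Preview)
Your proposal is correct and follows essentially the same approach as the paper. The one point the paper makes explicit that you leave implicit is the precise bridge between Proposition~\ref{freeact} and \cite[Proposition~9]{Bed4}: the hypothesis needed there is that $(\beta^r,\omega)$ be \emph{tracially properly outer}, and since the GNS-representation of $C_r^*(H,\sigma')$ associated to $\tau'$ is the identity representation on $\ell^2(H)$, this amounts exactly to $(\beta,\omega)$ being freely acting on $A'' = W^*(H,\sigma')$, which is what Proposition~\ref{freeact} provides.
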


\begin{proof}
Set $A=C_r^*(H, \sigma')$. We first have to show that $C^*(G,\sigma)\simeq C_r^*\big(A, K, \beta^r,\omega\big)$ has a unique tracial state. Since $\tau'$ is assumed to be the unique $K$-invariant tracial state of $A$, according to \cite[Proposition~9]{Bed4}, it suffices to check that the twisted action $(\beta^r, \omega)$ of $K$ on $A$ is tracially properly outer in the sense of \cite{Bed4}. As the GNS-representation of $C_r^*(H, \sigma')$ associated to $\tau'$ is the identity representation of $A$ on $\ell^2(H)$, this amounts to checking that $(\beta,\omega)$ is freely acting on $A'' = W^*(H,\sigma')$. Since $(G,H,\sigma)$ is assumed to satisfy the relative Kleppner condition, this follows from Proposition~\ref{freeact}.

If (a) or (b) also holds, then combining the first assertion with Theorem~\ref{Murph} gives that $(G,\sigma)$ is $C^*$-simple.
\end{proof}

\begin{remark}
It follows from \cite[Proposition~15~(i)]{Bed4} (see also \cite[Proposition~6]{BdH}) that if
\begin{equation}\label{free-cond}
\lvert C_H(g)\rvert =\infty\,\, \text{ for all } g \in G\setminus H\,
\end{equation}
and $H$ has the unique trace property, then $G$ has the unique trace property. Since condition \eqref{free-cond} corresponds to the relative Kleppner condition for $(G,H,1)$, the first assertion in Theorem~\ref{utp} provides a twisted version of this result.
\end{remark}

Proposition~\ref{freeact} and Theorem~\ref{utp} have several interesting corollaries.

\begin{corollary}\label{sut-relKlep}
Assume that $(G,H,\sigma)$ satisfies the relative Kleppner property. Then the following assertions hold:
\begin{itemize}
\item[(i)] $(G,\sigma)$ has the unique trace property whenever $(H,\sigma')$ has the unique trace property.
\item[(ii)]
$(G,\sigma)$ is $C^*$-simple whenever $(H,\sigma')$ is $C^*$-simple.
\item[(iii)]
$(G,\sigma)$ is $C^*$-simple with the unique trace property whenever $(H,\sigma')$ is $C^*$-simple with the unique trace property.
\end{itemize}
\end{corollary}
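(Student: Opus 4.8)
The plan is to deduce all three assertions from Proposition~\ref{freeact} and Theorem~\ref{utp}, the only non-immediate case being~(ii). For~(i): if $(H,\sigma')$ has the unique trace property, then $\tau'$ is the \emph{only} tracial state of $C^*_r(H,\sigma')$, hence a fortiori the only $K$-invariant one; since $(G,H,\sigma)$ satisfies the relative Kleppner condition by hypothesis, the first assertion of Theorem~\ref{utp} applies verbatim and gives that $(G,\sigma)$ has the unique trace property.

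For~(ii), note that Theorem~\ref{utp} cannot be invoked directly, since its $C^*$-simplicity conclusion is stated only under the extra hypothesis that $G$ be amenable, or exact with $C^*_r(G,\sigma)$ of stable rank one; instead we argue through Kishimoto's theorem. By Proposition~\ref{freeact}, the relative Kleppner condition forces the twisted action $(\beta,\omega)$ of $K$ on $W^*(H,\sigma')$ to be freely acting. Writing $A = C^*_r(H,\sigma')$ and recalling that the identity representation of $A$ on $\ell^2(H)$ is the GNS representation of $\tau'$, so that $A'' = W^*(H,\sigma')$, each $\beta^r_k$ with $k \in K\setminus\{e\}$ is the restriction to $A$ of the freely acting automorphism $\beta_k$ of $A''$; in particular $\beta^r_k$ cannot be inner, since an inner automorphism of $A$ would be implemented by a unitary of $A \subset A''$ and would therefore fail to act freely on $A''$. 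Thus $(\beta^r,\omega)$ is a properly outer twisted action of $K$ on the simple unital C$^*$-algebra $A$, and no minimality hypothesis on the $K$-action is required because $A$ is already simple. The twisted version of Kishimoto's theorem \cite[Theorem~3.1]{Kis} (see also \cite{Bed}) then yields that the reduced twisted crossed product $C^*_r\bigl(A, K, \beta^r, \omega\bigr) \simeq C^*_r(G,\sigma)$ is simple, that is, $(G,\sigma)$ is $C^*$-simple.

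Finally,~(iii) follows by combining~(i) and~(ii): if $(H,\sigma')$ is $C^*$-simple with the unique trace property, then~(ii) makes $(G,\sigma)$ $C^*$-simple and~(i) makes it have the unique trace property. I expect the only genuinely delicate point to be the verification, in~(ii), that the free action on $W^*(H,\sigma')$ produced by Proposition~\ref{freeact} matches (and is not merely stronger than what is needed in) the hypothesis of the appropriate twisted analogue of Kishimoto's theorem, together with pinning down a clean formulation of that twisted analogue; once this bookkeeping with the definitions of \cite{Kal, Str, Kis} is settled, the rest of the argument is purely formal. When $G$ is amenable or exact with $C^*_r(G,\sigma)$ of stable rank one, one can alternatively combine~(i) with Theorem~\ref{utp} to obtain~(ii), but the Kishimoto route avoids these additional assumptions.
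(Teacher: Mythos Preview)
Your proposal is correct and follows essentially the same route as the paper: part~(i) via Theorem~\ref{utp}, part~(ii) via Proposition~\ref{freeact} together with the twisted Kishimoto theorem \cite[Theorem~3.2]{Bed}, and part~(iii) by combining the two. The only cosmetic difference is that the paper, for~(ii), first observes that $C^*$-simplicity of $(H,\sigma')$ forces $W^*(H,\sigma')$ to be a factor (via Kleppner's condition), so that ``freely acting'' coincides with ``outer'' on the von~Neumann level, whereas you argue directly that an inner $\beta^r_k$ would extend to an inner (hence non--freely-acting) $\beta_k$ on $A''$; both deductions are standard and equivalent here.
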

\begin{proof} The first assertion is an immediate consequence of Theorem~\ref{utp}. Next, suppose $(H,\sigma')$ is $C^*$-simple. Then $W^*(H, \sigma')$ is a factor, so it follows from Proposition~\ref{freeact} that the twisted action $(\beta, \omega)$ of $K$ on $W^*(H, \sigma')$ is outer. This implies that the twisted action $(\beta^r, \omega)$ of $K$ on $C_r^*(H, \sigma')$ is also outer. Hence, \cite[Theorem~3.2]{Bed} (the twisted version of \cite[Theorem~3.1]{Kis}) gives that $C^*_r(G,\sigma) \simeq C_r^*(A, K, \beta^r,\omega)$ is simple. This shows that (ii) holds. The third assertion follows readily from (i) and (ii).
\end{proof}

\begin{corollary}\label{sut-relKlep-2}
Assume that $H$ is FC-hypercentral, $(H,\sigma')$ satisfies Kleppner's condition and $(G,H,\sigma)$ satisfies the relative Kleppner condition. Then $(G,\sigma)$ is $C^*$-simple with the unique trace property.
\end{corollary}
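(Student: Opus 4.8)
The plan is to combine Theorem~\ref{FCH}, which tells us that every FC-hypercentral group lies in the class $\K$, with Corollary~\ref{sut-relKlep}. First I would use the FC-hypercentrality of $H$: by Theorem~\ref{FCH} we have $H \in \K$, so that $C^*S(H) = K(H) = UT(H)$. Since $(H,\sigma')$ is assumed to satisfy Kleppner's condition, this means $\sigma' \in K(H)$, and therefore $\sigma'$ lies in $C^*S(H) \cap UT(H)$. In other words, $(H,\sigma')$ is $C^*$-simple and has the unique trace property.

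Next I would feed this into the subgroup--to--group passage provided by the relative Kleppner condition. Since $(G,H,\sigma)$ is assumed to satisfy the relative Kleppner condition, and $(H,\sigma')$ is $C^*$-simple with the unique trace property by the previous paragraph, Corollary~\ref{sut-relKlep}~(iii) applies verbatim and yields that $(G,\sigma)$ is $C^*$-simple with the unique trace property, which is the assertion. (Equivalently, one could retrace the proof of Corollary~\ref{sut-relKlep}: Proposition~\ref{freeact} shows $(\beta,\omega)$ is freely acting on $W^*(H,\sigma')$, which is a factor because $(H,\sigma')$ satisfies Kleppner's condition, hence the twisted action is outer, and then the unique trace property follows from Theorem~\ref{utp} while $C^*$-simplicity follows from the twisted version of Kishimoto's theorem \cite[Theorem~3.2]{Bed}.)

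I do not expect a genuine obstacle here; the only thing to be careful about is that the hypotheses line up with the cited statements. In particular, note that $C^*$-simplicity of $(G,\sigma)$ is obtained through Corollary~\ref{sut-relKlep}~(ii), whose proof uses only that $W^*(H,\sigma')$ is a factor together with the outerness coming from Proposition~\ref{freeact}, so \emph{no} amenability or exactness assumption on $G$ is required here (in contrast with the conditions (a), (b) in Theorem~\ref{utp}). Thus the corollary is a clean consequence of Theorem~\ref{FCH} and Corollary~\ref{sut-relKlep}.
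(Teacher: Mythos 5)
Your proposal is correct and follows exactly the paper's own argument: Theorem~\ref{FCH} together with Kleppner's condition gives that $(H,\sigma')$ is $C^*$-simple with the unique trace property, and Corollary~\ref{sut-relKlep}~(iii) then yields the conclusion. Your additional observation that no amenability or exactness of $G$ is needed is also accurate.
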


\begin{proof}
As the first two assumptions imply that $(H,\sigma')$ is $C^*$-simple with the unique trace property, cf.~Theorem~\ref{FCH}, this follows from Corollary~\ref{sut-relKlep}~(iii).
\end{proof}

\begin{corollary}\label{2Klepp}
Assume that the following three conditions hold:
\begin{itemize}
\item[(i)] $(G,\sigma)$ satisfies Kleppner's condition;
\item[(ii)] $H$ is contained in $FCH(G)$;
\item[(iii)] $(G, H,\sigma)$ satisfies the relative Kleppner condition.
\end{itemize}
Then $(G,\sigma)$ has the unique trace property. If, in addition, $G$ is amenable, or $G$ is exact and $C^*_r(G,\sigma)$ has stable rank one, then $(G,\sigma)$ is $C^*$-simple.
\end{corollary}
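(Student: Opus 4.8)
The plan is to deduce the statement from Theorem~\ref{utp}, applied not to the given triple $(G,H,\sigma)$ but to the triple $(G, FCH(G), \sigma)$; the subgroup $H$ will serve only, through hypotheses (ii) and (iii), to produce the relative Kleppner condition for this larger triple. Write $L = FCH(G)$, let $\sigma_L$ denote the restriction of $\sigma$ to $L\times L$, and let $\tau_L$ be the canonical tracial state of $C^*_r(L,\sigma_L)$.

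First I would check that $(G,L,\sigma)$ satisfies the relative Kleppner condition. Since $L=FCH(G)$ is a normal subgroup of $G$ containing $H$ by (ii), this is immediate from (iii) together with Remark~\ref{relK}~d). Next I would show that $\tau_L$ is the unique $(G/L)$-invariant tracial state of $C^*_r(L,\sigma_L)$. This is exactly the first half of the argument in the proof of Theorem~\ref{FCH-P}: since $(G,\sigma)$ satisfies Kleppner's condition by (i), \cite[Proposition~4.3]{BO} (with $L=FCH(G)$) gives precisely this conclusion. I would stress that only hypothesis~(i) is used at this stage; the additional assumption $FCH(G)=AR(G)$ of Theorem~\ref{FCH-P} plays no part here, being needed there only afterwards to invoke Proposition~\ref{BryKed}, which we do not use.

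With both hypotheses of Theorem~\ref{utp} now in hand for $(G,L,\sigma)$, that theorem gives that $(G,\sigma)$ has the unique trace property, and, under the further assumption that $G$ is amenable, or that $G$ is exact and $C^*_r(G,\sigma)$ has stable rank one, that $(G,\sigma)$ is $C^*$-simple. I do not expect a serious obstacle here: the only substantive ingredient is \cite[Proposition~4.3]{BO}, quoted as a black box, and the argument amounts to transferring the relative Kleppner condition upward from $H$ to $FCH(G)$ --- where the uniqueness of the invariant trace is available --- and then invoking Theorem~\ref{utp}. The one point meriting care is the trivial but necessary observation that $FCH(G)$ is normal in $G$ and contains $H$, so that Remark~\ref{relK}~d) genuinely applies.
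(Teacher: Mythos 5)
Your proposal is correct and follows exactly the route of the paper's own proof: pass from $H$ to $FCH(G)$ via Remark~\ref{relK}~d), use \cite[Proposition~4.3]{BO} to convert Kleppner's condition into uniqueness of the $G/FCH(G)$-invariant trace on $C^*_r(FCH(G),\sigma_0)$, and then apply Theorem~\ref{utp}. Your remark that only hypothesis~(i) (and not $FCH(G)=AR(G)$) is needed for the trace step is also accurate.
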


\begin{proof}
Using Remark~\ref{relK}~d), it follows from (ii) and (iii) that $(G, FCH(G),\sigma)$ satisfies the relative Kleppner condition.
If we let $\sigma_0$ denote the restriction of $\sigma$ to $FCH(G)\times FCH(G)$, then we get from \cite[Proposition~4.3]{BO} that (i) is equivalent to $C_r^*\big(FCH(G), \sigma_0\big)$ having a unique $ICC(G)$-invariant tracial state. Hence, the result follows from Theorem~\ref{utp}.
\end{proof}

\begin{remark}
To apply Corollary~\ref{2Klepp}, the natural choices for $H$ are $Z(G)$, $FC(G)$, and $FCH(G)$. Remark~\ref{relK}~e) is then useful to check that conditions (i) and (iii) hold, as will be illustrated in the next section.
\end{remark}

Another useful result is:
\begin{corollary}\label{sut-ab}
Assume that $(G,H,\sigma)$ satisfies the relative Kleppner condition and that $\tau'$ is the unique $K$-invariant tracial state of $C_r^*(H, \sigma')$. If $C_r^*(H,\sigma')$ is commutative and $K$ acts on $C_r^*(H,\sigma')$ in a minimal way, then
 $(G,\sigma)$ is $C^*$-simple with the unique trace property.
\end{corollary}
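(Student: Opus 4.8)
The plan is to treat the two conclusions separately. The unique trace property is immediate from the first assertion of Theorem~\ref{utp}: its two hypotheses --- the relative Kleppner condition for $(G,H,\sigma)$ and the uniqueness of $\tau'$ among the $K$-invariant tracial states of $C_r^*(H,\sigma')$ --- are precisely two of the standing assumptions here, and no amenability is needed for that part. So the substance of the proof is $C^*$-simplicity, and I would obtain it directly, along the lines of the proof of Corollary~\ref{sut-relKlep}~(ii), rather than through conditions (a) or (b) of Theorem~\ref{utp} (neither of which is assumed).

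Set $A=C_r^*(H,\sigma')$. Since $A$ is commutative and unital, identify it with $C(X)$ where $X=\widehat{A}$ is a compact Hausdorff space, and note that the canonical faithful trace $\tau'$ corresponds to a Borel probability measure $\mu$ on $X$ with $\operatorname{supp}\mu=X$. In the GNS representation of $A$ associated with $\tau'$ --- the identity representation on $\ell^2(H)$ --- we then have $W^*(H,\sigma')=A''=L^\infty(X,\mu)$. Each automorphism $\beta^r_k$ of $C(X)$ is implemented by a homeomorphism $T_k$ of $X$, and, via the bijection between ideals of $C(X)$ and closed subsets of $X$, the hypothesis that $K$ acts on $A$ in a minimal way says exactly that the $K$-action on $X$ is minimal.

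The key point is to upgrade the output of Proposition~\ref{freeact} --- that the twisted action $(\beta,\omega)$ is freely acting on $W^*(H,\sigma')=L^\infty(X,\mu)$ --- to topological freeness of the $K$-action on $X$. For a fixed $k\in K\setminus\{e\}$, the assertion that $\beta_k$ is freely acting on $L^\infty(X,\mu)$ is equivalent to $\mu\big(\operatorname{Fix}(T_k)\big)=0$: a nonzero $T\in L^\infty(X,\mu)$ with $\beta_k(S)T=TS$ for all $S$ produces, via a spectral projection of $T$, a positive-measure set on which $T_k$ acts trivially, and conversely such a set yields such a $T$. Since $\operatorname{Fix}(T_k)$ is closed and $\mu$ has full support, $\mu(\operatorname{Fix}(T_k))=0$ forces $\operatorname{Fix}(T_k)$ to have empty interior; as $k\neq e$ was arbitrary, the $K$-action on $X$ is topologically free, equivalently each $\beta^r_k$ $(k\neq e)$ is properly outer on $C(X)$. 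Combining topological freeness with minimality, the reduced twisted crossed product $C_r^*(G,\sigma)\simeq C_r^*(A,K,\beta^r,\omega)$ with $A=C(X)$ is simple, by the twisted analogue of the Archbold--Spielberg simplicity criterion for commutative C$^*$-dynamical systems (which rests on the results of Kishimoto \cite{Kis} and Olesen--Pedersen \cite{OP} that the paper builds on; when $A$ is simple one may instead invoke \cite[Theorem~3.2]{Bed} directly, as in Corollary~\ref{sut-relKlep}~(ii)).

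I expect the main obstacle to be exactly this bridging step. One must be careful about the meaning of ``freely acting'' for an automorphism of the non-factorial (here commutative) von~Neumann algebra $L^\infty(X,\mu)$, and verify that, combined with the full support of $\mu$, it yields precisely the proper outerness hypothesis under which the simplicity criterion is formulated. In Corollary~\ref{sut-relKlep}~(ii) this was automatic, since $W^*(H,\sigma')$ was a factor and ``freely acting'' coincided with ``outer'' there; here that shortcut is unavailable, and it is the commutativity of $A$ together with the faithfulness of $\tau'$ that compensates.
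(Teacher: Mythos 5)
Your proof is correct and follows essentially the same route as the paper's: the unique trace property is obtained from Theorem~\ref{utp} exactly as you do, and simplicity from the fact that (by Proposition~\ref{freeact}, via the proof of Theorem~\ref{utp}) the twisted action is tracially properly outer, which together with commutativity of $C_r^*(H,\sigma')$ and minimality yields simplicity of the crossed product. The only difference is that the paper cites \cite[Theorem~10, part~(b), case~(ii)]{Bed4} for this last implication, whereas you unpack that citation into the underlying topological-freeness argument (freely acting on $L^\infty(X,\mu)$ plus full support of $\mu$ forces the fixed-point sets to have empty interior, and then minimality gives simplicity).
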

\begin{proof} We know from Theorem~\ref{utp} that the first two assumptions imply that $(G,\sigma)$ has the unique trace property. As seen in the proof of this result, $(\beta^r, \omega)$ is then a tracially properly outer twisted action of $K$ on $A:=C_r^*(H,\sigma')$. Since $A$ is commutative and $K$ acts on $A$ in a minimal way, it follows from \cite[Theorem~10,~part~(b), case~(ii)]{Bed4} that $C^*_r(G,\sigma) \simeq C_r^*(A, K, \beta^r,\omega)$ is simple.
\end{proof}

We also include the following result:
\begin{corollary}\label{sut-torsionfree}
Assume that $(G,H,\sigma)$ satisfies the relative Kleppner condition, $H$ is countable and $K$ is torsion free.
If $K$ acts on $C_r^*(H,\sigma')$ in a minimal way, then $(G,\sigma)$ is $C^*$-simple. Moreover, if, in addition, $\tau'$ is the unique $K$-invariant tracial state of $C_r^*(H, \sigma')$, then $(G,\sigma)$ is $C^*$-simple and has the unique trace property.
\end{corollary}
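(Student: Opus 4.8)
The plan is to deduce both statements from the crossed-product decomposition $C^*_r(G,\sigma) \simeq C_r^*\big(A, K, \beta^r,\omega\big)$ with $A := C_r^*(H,\sigma')$, recalled in subsection~\ref{subgroups}, by combining the simplicity criterion for twisted crossed products by torsion-free groups from \cite{Bed4} with the first assertion of Theorem~\ref{utp} for the trace. To begin with, since $H$ is countable, $A$ is separable, which puts us in the setting where the C$^*$-algebraic results of \cite{Bed4} apply.

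For $C^*$-simplicity, the relative Kleppner condition, through Proposition~\ref{freeact}, ensures that the twisted action $(\beta,\omega)$ of $K$ on $W^*(H,\sigma')$ is freely acting; as the GNS representation of $A$ associated to $\tau'$ is the identity representation of $A$ on $\ell^2(H)$ and $A'' = W^*(H,\sigma')$, this means exactly that $(\beta^r,\omega)$ is tracially properly outer in the sense of \cite{Bed4}, just as in the proof of Theorem~\ref{utp}. Thus $A$ is separable, $(\beta^r,\omega)$ is tracially properly outer, $K$ is torsion free, and $K$ acts on $A$ in a minimal way, so \cite[Theorem~10, part~(b), case~(i)]{Bed4}---the torsion-free analogue of the commutative case~(ii) invoked in the proof of Corollary~\ref{sut-ab}---yields that $C_r^*(A,K,\beta^r,\omega) \simeq C^*_r(G,\sigma)$ is simple, i.e.\ $(G,\sigma)$ is $C^*$-simple.

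For the additional statement, assume moreover that $\tau'$ is the unique $K$-invariant tracial state of $A$. The hypotheses of the first assertion of Theorem~\ref{utp} (the relative Kleppner condition together with the uniqueness of the $K$-invariant trace) are then satisfied, so $(G,\sigma)$ has the unique trace property; combined with the simplicity established above, $(G,\sigma)$ is $C^*$-simple with the unique trace property. The only substantive ingredient beyond assembling these pieces is the appeal to the torsion-free simplicity criterion of \cite{Bed4}: the torsion-freeness of $K$ is genuinely needed there, since tracial proper outerness together with minimality of the action is in general not enough to force simplicity of a crossed product by an arbitrary discrete group (cf.\ Example~\ref{anosov}). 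Verifying that our data---separability of $A$ coming from countability of $H$, tracial proper outerness supplied by Proposition~\ref{freeact}, torsion-freeness and minimality assumed outright---meet the hypotheses of that criterion is thus the heart of the argument, and everything else is bookkeeping around the decomposition $C^*_r(G,\sigma) \simeq C_r^*(A,K,\beta^r,\omega)$.
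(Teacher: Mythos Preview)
Your proof is correct and follows the same overall strategy as the paper: both reach simplicity via the crossed-product decomposition, the outerness supplied by Proposition~\ref{freeact}, and a torsion-free simplicity criterion, and both handle the trace via Theorem~\ref{utp}. The packaging differs slightly. The paper does not invoke a case of \cite[Theorem~10]{Bed4} as a black box; instead it reduces simplicity to showing that each $\beta^r_k$ is properly outer in the sense of Olesen--Pedersen, appeals to \cite[Theorem~7.2]{OP} (noting its proof goes through for twisted actions) to get simplicity from proper outerness plus minimality plus separability, and then \emph{adapts the argument in the proof} of \cite[Theorem~10, part~(b), case~(iii)]{Bed4} to pass from freely acting on $W^*(H,\sigma')$ together with torsion-freeness of $K$ to OP-proper-outerness on $A$. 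Your direct citation of ``case~(i)'' is almost certainly a slip---the torsion-free case in \cite{Bed4} is (iii), not (i)---and the paper's phrase ``copy the argument given in the proof'' signals that the statement there may not apply verbatim in the twisted setting, which is why the detour through \cite{OP} is made explicit. Finally, your aside about Example~\ref{anosov} does not quite illustrate what you claim: in that example one does not know that tracial proper outerness and minimality both hold when simplicity fails, so it is not a clean witness that torsion-freeness is genuinely needed.
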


\begin{proof}
Assume that $K$ acts on $A:=C_r^*(H,\sigma')$ in a minimal way. To show that $C^*_r(G,\sigma) \simeq C_r^*(A, K, \beta^r,\omega)$ is simple, it suffices then to show that for each $k\in K\setminus \{e\}$, $\beta^r_k$ is properly outer as a $*$-automorphism of $A$, as defined in \cite{OP}. Indeed, this follows from \cite[Theorem~7.2]{OP} by noting that $A$ is separable when $H$ is countable and that the proof of Olesen and Pedersen's result is still valid in the case of a twisted action. Now, we know from Proposition~\ref{freeact} that the twisted action $(\beta, \omega)$ of $K$ on $W^*(H,\sigma')$ is freely acting. Using that $K$ is torsion free, we may copy the argument given in the proof of \cite[Theorem~10, part~(b), case~(iii)]{Bed4} to deduce from this fact that $\beta^r_k$ is properly outer for every $k\in K\setminus \{e\}$.

The second assertion follows from the first assertion combined with Theorem~\ref{utp}.
\end{proof}

It is known that if the centralizer $Z_G(H)$ of $H$ in $G$ is trivial and $H$ is $C^*$-simple (resp.\ has the unique trace property), then $G$ is $C^*$-simple (resp.\ has the unique trace property), cf.~\cite{Bed, Bed4}. We can generalize this to the twisted case as follows.

\begin{definition}
The \emph{$\sigma$-centralizer of $H$ in $G$} is the subset of $G$ given by
\[
Z_G^\sigma(H) = \{ g \in G : gs=sg \text{ and } \sigma(g,s)=\sigma(s,g) \text{ for all } s \in H \} \,.
\]
In other words,
\[
Z_G^\sigma(H) = Z_G(H) \, \cap \, \{ g \in G : \text{$g$ is $\sigma$-regular w.r.t.\ $H$}\} \,.
\]
\end{definition}

\begin{proposition}\label{trivial-centralizer}
Assume that $H$ is ICC and $Z_G^\sigma(H)$ is trivial.
If $(H,\sigma')$ is $C^*$-simple \(resp.\ has the unique trace property\),
then $(G,\sigma)$ is $C^*$-simple \(resp.\ has the unique trace property\).
\end{proposition}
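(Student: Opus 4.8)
The plan is to deduce both implications from Corollary~\ref{sut-relKlep} by proving that, under the present hypotheses, the triple $(G,H,\sigma)$ satisfies the relative Kleppner condition. Granting this, part~(ii) of that corollary yields $C^*$-simplicity of $(G,\sigma)$ when $(H,\sigma')$ is $C^*$-simple, and part~(i) yields the unique trace property of $(G,\sigma)$ when $(H,\sigma')$ has it. So the whole problem reduces to verifying condition~(2) in the definition of the relative Kleppner condition: for every $g\in G\setminus H$ such that $C_H(g)$ is $\sigma$-regular, one must show $\lvert C_H(g)\rvert=\infty$. I would argue by contradiction. Assume $\lvert C_H(g)\rvert<\infty$ and set $N:=Z_H(g):=\{s\in H:sg=gs\}$, so that $[H:N]=\lvert C_H(g)\rvert<\infty$ by the orbit--stabilizer principle.

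The crux is a purely group-theoretic statement: \emph{if $H$ is ICC and $g\in G$ has a finite $H$-conjugacy class, then $g\in Z_G(H)$.} To prove it, consider the map $\delta\colon H\to H$, $\delta(h)=ghg^{-1}h^{-1}$ (which lands in $H$ since $H$ is normal in $G$). Because every $s\in N$ commutes with $g$, a one-line computation gives $\delta(hs)=\delta(h)$ for all $h\in H$, $s\in N$; hence $\delta$ is constant on each coset $hN$, so $F:=\delta(H)$ is finite with $\lvert F\rvert\le[H:N]$. Next, the standard commutator identity $g(hh')g^{-1}(hh')^{-1}=(ghg^{-1}h^{-1})\,h(gh'g^{-1}h'^{-1})h^{-1}$ rearranges to
\[
h\,(gh'g^{-1}h'^{-1})\,h^{-1}=(ghg^{-1}h^{-1})^{-1}\,g(hh')g^{-1}(hh')^{-1}\in F^{-1}F
\]
for all $h,h'\in H$, where $F^{-1}F=\{a^{-1}b:a,b\in F\}$ is again finite. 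Thus every element of $F$ has a finite conjugacy class in $H$; since $H$ is ICC, this forces $F=\{e\}$, i.e.\ $ghg^{-1}h^{-1}=e$ for all $h\in H$, that is, $g\in Z_G(H)$.

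To finish, invoke $\sigma$-regularity: since $C_H(g)$ is $\sigma$-regular, $g$ is $\sigma$-regular with respect to $H$, so $\sigma(g,s)=\sigma(s,g)$ for every $s\in H$ commuting with $g$; but by the claim $g$ commutes with \emph{all} of $H$, whence $\sigma(g,s)=\sigma(s,g)$ for every $s\in H$, i.e.\ $g\in Z_G^\sigma(H)$. Since $Z_G^\sigma(H)$ is trivial this gives $g=e\in H$, contradicting $g\in G\setminus H$. Hence $(G,H,\sigma)$ satisfies the relative Kleppner condition, and Corollary~\ref{sut-relKlep}(i)--(ii) completes the proof. The only substantial step is the combinatorial claim of the middle paragraph, and that is exactly where the ICC hypothesis on $H$ is needed; everything else is bookkeeping. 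One could alternatively prove that $\gamma_k$ is an outer automorphism of the factor $W^*(H,\sigma')$ for each $k\in G\setminus H$ by analyzing the Fourier support of the relative commutant of $W^*(H,\sigma')$ inside $W^*(G,\sigma)$, but that route ultimately rests on the same group-theoretic fact.
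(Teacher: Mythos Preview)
Your proof is correct and follows essentially the same route as the paper: both reduce the statement to verifying the relative Kleppner condition for $(G,H,\sigma)$, then conclude via Corollary~\ref{sut-relKlep} (the paper spells out the same ingredients, Proposition~\ref{freeact} plus Kishimoto/Theorem~\ref{utp}, rather than citing the corollary). The only cosmetic difference is in the group-theoretic step: where you use the commutator map $\delta(h)=ghg^{-1}h^{-1}$ and the identity $\delta(hh')=\delta(h)\cdot h\delta(h')h^{-1}$ to trap conjugates in $F^{-1}F$, the paper observes directly that for any $g'\in C_H(g)$ one has $g^{-1}g'\in H$ with $C_H(g^{-1}g')\subset C_H(g)^{-1}C_H(g)$, hence $g^{-1}g'=e$ by ICC; both arguments yield $C_H(g)=\{g\}$ and hence $g\in Z_G^\sigma(H)$.
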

 
\begin{proof}
We first prove that $(G,H,\sigma)$ satisfies the relative Kleppner condition.
Assume $g\in G\setminus H$ is $\sigma$-regular w.r.t.\ $H$. We must show that $\lvert C_H(g)\rvert = \infty$.
Suppose that this is not the case. Let $g' \in C_H(g)$, so $g' = sgs^{-1}$ for some $s \in H$.
Then we have $g^{-1}g' = (g^{-1} s g) s^{-1} \in H$.
Moreover, $C_H(g^{-1}g') \subset C_H(g)^{-1} C_H(g') = C_H(g)^{-1} C_H(g)$, so
\[
\lvert C_H(g^{-1}g')\rvert \leq \lvert C_H(g)^{-1} C_H(g')\rvert \leq \lvert C_H(g)\rvert^2 < \infty\,.
\]
Since $H$ is ICC, we must have $g^{-1}g' = e$. Thus, $g'=g$, that is, $C_H(g)=\{g\}$, and it follows that $g\in Z^\sigma_G(H)$.
Since $Z^\sigma_G(H)=\{e\}$, we get that $g=e$, which is impossible since $g\in G\setminus H$.

Since $(G,H,\sigma)$ satisfies the relative Kleppner condition,
Proposition~\ref{freeact} gives that $\beta_k$ is a freely acting automorphism of $W^*(H,\sigma)$ for each $k\in K\setminus \{e\}$.
This implies that $\beta_k^r $ is an outer automorphism of $C_r^*(H, \sigma')$ for each $k\in K\setminus \{e\}$.
Hence, if $(H,\sigma')$ is $C^*$-simple, that is, $A:=C_r^*(H,\sigma')$ is simple,
then it follows from the twisted version of Kishimoto's theorem (see \cite[Theorem~3.2]{Bed})
that $C_r^*(G,\sigma) \simeq C_r^*(A, K, \beta^r, \omega)$ is simple.
On the other hand, if $(H,\sigma')$ has the unique trace property,
then Theorem~\ref{utp} applies and it follows that $(G,\sigma)$ has the unique trace property, as desired.
\end{proof}

\begin{remark}
It is possible that the assumption that $H$ is ICC in Proposition~\ref{trivial-centralizer} is redundant. The proof shows that the argument goes through as long as one knows that $\lvert C_H(g)\rvert \in \{ 1, \infty\}$ for every $g \in G\setminus H$, but we do not see how to deduce this from the assumption that $Z_G^\sigma(H)$ is trivial.
\end{remark}

\begin{remark}
Proposition~\ref{trivial-centralizer} is applied in the study of braid related groups in \cite{Om3}.
There is an action $\alpha$ of the braid group $B_n$ on $n$ strands on the free group $\F_n$, often called ``Artin's representation'', and it shown that the corresponding semidirect product $\F_n\rtimes_\alpha B_n$ belongs to the class $\K$ for all $n$, by computing that the centralizer of $\F_n$ is trivial.

Moreover, Corollary~\ref{sut-relKlep} is applied to prove that the braid groups $B_\infty$ and $P_\infty$ on infinitely many strands are both $C^*$-simple.
For the latter, one checks the relative Kleppner condition for $(P_\infty,\F_\infty,1)$, and then for $B_\infty$ one checks the relative Kleppner condition for $(B_\infty,P_\infty,1)$.
\end{remark}

\section{Examples}\label{ex}

\subsection{Semidirect products of abelian groups by aperiodic automorphisms}\label{aperiodic-autos}

Throughout this subsection, $H$ will be an infinite abelian group and $\beta$ will denote an automorphism of $H$.
We will use addition to denote the group operation in $H$. Moreover, for $k\in \Z$ and $x\in H$, we will often write $k\cdot x$ instead of $\beta^k(x)$.
The automorphism $\beta$ will be called \emph{aperiodic} when the orbit of any nontrivial element in $H$ is infinite
(or, equivalently, when $k\cdot x \neq x$ for all $k\in\Z\setminus\{0\}$ and all $x\in H\setminus\{0\}$).

We will consider the semidirect product $G=H\rtimes \Z$ associated with the action of $\Z$ on $H$ induced by $\beta$. For further use, we note that for $x, y\in H$ and $k\in \Z$, we have
\begin{equation} \label{conj-eq}
(y,k)(x,0)(y,k)^{-1}= (y,0)(k\cdot x, 0)(y,0)^{-1}=(k\cdot x,0)\,.
\end{equation}
As usual, we will sometimes identify $H$ and $\Z$ with their canonical copies in $G$ via the maps $x \mapsto (x,0)$ and $k\mapsto (0,k)$, so that we may write \eqref{conj-eq} as
\[
(y,k) \,x \,(y,k)^{-1} = k\cdot x\,.
\]
In particular, we then have $kxk^{-1} = k\cdot x$ for $x\in H$ and $k\in \Z$, in agreement with the notation used in subsection~\ref{subgroups}.

Next, we remark that the following conditions are equivalent:
\begin{itemize}
\item[(i)] $\beta$ is aperiodic
\item[(ii)] $G$ is ICC
\end{itemize}
Indeed, if $\beta$ is not aperiodic, so there exists $x\in H\setminus \{0\}$ with a finite orbit in $H$, one easily sees from equation \eqref{conj-eq} that the conjugacy class of $x =(x,0)$ in $G$ is finite.
On the other hand, assume that $\beta$ is aperiodic.
If $x\in H\setminus\{0\}$, then
\[
\{ (0,l)(x,k)(0,l)^{-1} : l\in\Z \} = \{ (l\cdot x,k) : l\in\Z \}
\]
is clearly infinite for each $k\in \Z$. Further, if $k\in \Z \setminus \{0\}$, then
\begin{equation}\label{icc}
\{ (y,0)(0,k)(y,0)^{-1} : y\in H \} = \{ (y+k\cdot (-y),k) : y\in H \}
\end{equation}
is infinite. Indeed, if $y_1+k\cdot (-y_1)=y_2+k\cdot (-y_2)$, then $y_1-y_2=k\cdot (y_1-y_2)$, so $y_1=y_2$ as $\beta$ is aperiodic. Since $H$ is infinite, the claim holds. Thus we see that $G$ is ICC.

\medskip When $\beta$ is aperiodic, we thus get that the amenable group $G$, being ICC, does not lie in $\K$. However, as seen previously in Example~\ref{anosov} in the case where $G=\Z^n\rtimes_A \Z$, there can still exist $2$-cocycles $\sigma$ on $G$ such that $(G,\sigma)$ is $C^*$-simple and/or has the unique trace property. Our aim is to illustrate this in a more general context.

Let $\sigma'\in Z^2(H,\T)$. We will assume that $\sigma'$ is \emph{$\Z$-invariant}, meaning that it satisfies \[\sigma'(k\cdot x, k\cdot y) = \sigma'(x,y)\] for all $x,y\in H$ and $k\in\Z$. As is well-known, see e.g.\ \cite[Appendix~2]{PR} or \cite[2.1--2.4]{Om2} (and \cite{Om2b}), we may then define a $2$-cocycle $\sigma \in Z^2(G,\T)$ by
\[
\sigma\big((x, k), (y, l)\big) = \sigma'(x, k\cdot y)
\]
for $x, y \in H$ and $k,l \in \Z$. 
We then have that $\widetilde\sigma\big((0,k),(h,0)\big) =1$ for all $k \in \Z$ and $h \in H$, so it follows that $C_r^*(G,\sigma)$ decomposes as the reduced crossed product of $A=C_r^*(H,\sigma')$ by the action of $\Z$ on $A$ associated to the $*$-automorphism $\widetilde\beta$ of $A$ determined by $\widetilde\beta\big(\lambda_{\sigma'}(x)\big)= \lambda_{\sigma'}(\beta(x))$ for all $x \in H$.
We note that saying that $\Z=G/H$ acts on $A$ in a minimal way just means that $\widetilde\beta$ acts minimally on $A$, i.e., that the zero ideal is the only proper ideal of $A$ which is invariant under $\widetilde\beta$. 

To ease our analysis, we set
\[
S:=\{ x\in H : \text{$x$ is $\sigma'$-regular} \}.
\]
Since $H$ is abelian, we have $S=\{x \in H : \text{$\sigma'(x,y)=\sigma'(y,x)$ for all $y\in H$}\}$.
Moreover, $S$ is a subgroup of $H$ such that $k\cdot x \in S$ whenever $k\in \Z$ and $x\in S$ (since $\sigma'$ is invariant).

We also set $\sigma'':= (\sigma')_{|S\times S} \,= \sigma_{|S\times S} \in Z^2(S,\T)$.
As $\sigma''$ is a symmetric, it follows from \cite{Klepp} that $\sigma''$ is a coboundary, i.e., $\sigma'' \in B^2(S,\T)$, so $C^*_r(S,\sigma'')\simeq C_r^*(S)$ is commutative.

\begin{theorem}\label{aperiodic}
Let $H$, $\beta$, $G$, $\sigma$, and $\sigma'$ be as above and suppose that $\beta$ is aperiodic. Consider the following conditions:
\begin{itemize}\itemsep1pt
\item[(i)] $(H,\sigma')$ satisfies Kleppner's condition.
\item[(ii)] $(G,\sigma)$ has the unique trace property.
\item[(iii)] $(G,\sigma)$ is $C^*$-simple.
\end{itemize}
Then we have \textup{(i)}~$\Longleftrightarrow$~\textup{(ii)}~$\Longrightarrow$~\textup{(iii)}.
Moreover, if $H$ is countable, then \textup{(iii)} holds if and only if 
$\widetilde\beta$ acts minimally on $C_r^*(H,\sigma')$.
\end{theorem}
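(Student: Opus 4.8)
The plan is to run everything through the crossed–product decomposition $C^*_r(G,\sigma)\simeq A\rtimes_{\widetilde\beta}\Z$ with $A=C^*_r(H,\sigma')$ recalled above, together with the results of Section~\ref{free}. The starting point is that $(G,H,\sigma)$ automatically satisfies the relative Kleppner condition, for \emph{every} $\sigma$: if $g=(y,k)\in G\setminus H$ then $k\neq0$, and one computes $C_H(g)=\{(y+z-k\cdot z,\,k):z\in H\}$; since $\beta$ is aperiodic, $z\mapsto z-k\cdot z$ is injective on $H$, so $C_H(g)$ is infinite because $H$ is. Thus condition~(2) in the definition of the relative Kleppner condition holds.

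The easy implications come next. Because $H$ is abelian it lies in $\K$, so (i) is equivalent to $(H,\sigma')$ having the unique trace property; combining this with the relative Kleppner condition and Corollary~\ref{sut-relKlep}(i) gives (ii). For (ii)$\Rightarrow$(iii) it is enough to observe that $G=H\rtimes\Z$ is metabelian, hence amenable, and to invoke Theorem~\ref{Murph}. (One also gets (iii) directly from (i) via Corollary~\ref{sut-relKlep}(ii), using again $H\in\K$.)

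The substance is in (ii)$\Rightarrow$(i), which I would establish in contrapositive form. Suppose $(H,\sigma')$ fails Kleppner's condition; since $H$ is abelian this means the subgroup $S=\{x\in H:x\ \text{is}\ \sigma'\text{-regular}\}$ is nontrivial. As $\sigma'$ is $\Z$-invariant, $\beta$ restricts to an automorphism of $S$, and it is aperiodic there; a finite group has no aperiodic automorphism, so $S$ is infinite. Working inside $A=C^*_r(H,\sigma')$, a Fourier–coefficient argument identifies the centre $Z(A)$ with $C^*_r(S,\sigma'')$, where $\sigma''=\sigma'|_{S\times S}$; as $\sigma''$ is symmetric it is a coboundary $\sigma''=\partial b$, so $Z(A)$ is an infinite-dimensional commutative C$^*$-algebra whose characters are the maps $\lambda_{\sigma'}(x)\mapsto b(x)\chi(x)$, $\chi\in\widehat S$. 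Such a character is $\widetilde\beta$-invariant exactly when $\chi(\beta(x)-x)=\overline{b(\beta(x))}\,b(x)$ for all $x\in S$; the right-hand side is a well-defined character of the subgroup $\{\beta(x)-x:x\in S\}$, since $\beta-1$ is injective on $S$ (aperiodicity) and $x\mapsto b(\beta(x))\overline{b(x)}$ is a homomorphism $S\to\T$ (here the $\Z$-invariance of $\sigma''$ enters), so by Pontryagin duality it extends to some $\chi\in\widehat S$. This yields a $\widetilde\beta$-invariant character $\varepsilon_0$ of $Z(A)$. Let $E_S\colon A\to Z(A)$ be the $\tau'$-preserving conditional expectation obtained as in subsection~\ref{subgroups} (killing $\lambda_{\sigma'}(x)$ for $x\notin S$); it is the restriction of the centre-valued trace of $W^*(H,\sigma')$, hence $E_S(ab)=E_S(ba)$, and it commutes with $\widetilde\beta$. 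Therefore $\phi_0:=\varepsilon_0\circ E_S$ is a $\widetilde\beta$-invariant tracial state on $A$ with $\phi_0\neq\tau'$, because $\varepsilon_0$ is not faithful on the infinite-dimensional algebra $Z(A)$ while $\tau'|_{Z(A)}$ is. Composing $\phi_0$ with the canonical conditional expectation $A\rtimes_{\widetilde\beta}\Z\to A$ (legitimate since $\Z$ is amenable) gives, under the isomorphism with $C^*_r(G,\sigma)$, a tracial state distinct from $\tau$; so (ii) fails. I expect the construction of $\varepsilon_0$ to be the real obstacle: one must recognise the invariance equation as a character–extension problem on $S$ and verify the homomorphism property that makes the extension possible — the rest is routine bookkeeping with the crossed product.

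Finally, assume $H$ is countable, so $A$ is separable. If $\widetilde\beta$ acts minimally on $A$, then — $K=G/H\cong\Z$ being torsion free and $(G,H,\sigma)$ satisfying the relative Kleppner condition — Corollary~\ref{sut-torsionfree} gives that $(G,\sigma)$ is $C^*$-simple, i.e.\ (iii). Conversely, if $C^*_r(G,\sigma)\simeq A\rtimes_{\widetilde\beta}\Z$ is simple and $J\subsetneq A$ were a nonzero $\widetilde\beta$-invariant ideal, then $J\rtimes_{\widetilde\beta}\Z$ would be a proper nonzero ideal of $A\rtimes_{\widetilde\beta}\Z$; hence no such $J$ exists and $\widetilde\beta$ acts on $A$ in a minimal way.
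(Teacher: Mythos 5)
Your proposal is correct, and except for one step it follows the same route as the paper: the same conjugacy-class computation gives the relative Kleppner condition for $(G,H,\sigma)$ for every $\sigma$, (i)~$\Rightarrow$~(ii) comes from Corollary~\ref{sut-relKlep} (the paper packages this as Corollary~\ref{sut-relKlep-2}), (ii)~$\Rightarrow$~(iii) from amenability of $G$ and Theorem~\ref{Murph}, and the final equivalence from Corollary~\ref{sut-torsionfree} together with the standard fact that a proper nonzero invariant ideal $J$ of $A$ yields the proper nonzero ideal $J\rtimes_{\widetilde\beta}\Z$. The genuine divergence is in (ii)~$\Rightarrow$~(i). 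The paper likewise passes to $C^*_r(S,\sigma'')$ and composes a non-canonical invariant state with the conditional expectation $E_S$, but it produces that state by citing \cite[Corollary~11.3.4 and Theorem~11.4.2]{NS}, using that $\sigma''$ is symmetric. You construct it explicitly as a character $\lambda_{\sigma''}(x)\mapsto b(x)\chi(x)$ of $C^*_r(S,\sigma'')$: the invariance equation $\chi(\beta(x)-x)=\overline{b(\beta(x))}\,b(x)$ is solvable because $x\mapsto b(\beta(x))\overline{b(x)}$ is a homomorphism on $S$ (this is where the $\Z$-invariance of $\sigma''$ enters) and $\beta-1$ is injective on $S$ by aperiodicity, so the prescription defines a character of the subgroup $(\beta-1)(S)$ which extends to all of $S$ by divisibility of $\T$. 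This is more self-contained and elementary than the citation, at the price of the (correct, and worth checking) verifications that $E_S$ is tracial on the generally noncommutative algebra $C^*_r(H,\sigma')$ --- which holds because $\sigma'(x,y)=\sigma'(y,x)$ whenever $x+y\in S$ --- and that the resulting state is $\widetilde\beta$-invariant. Two cosmetic remarks: the canonical conditional expectation of a reduced crossed product onto its coefficient algebra exists for any acting group, so amenability of $\Z$ is not what legitimises the final composition; and the identification $Z(A)=C^*_r(S,\sigma'')$, while true, is not actually needed --- only the inclusion $C^*_r(S,\sigma'')\subset Z(A)$ and the traciality and equivariance of $E_S$ are used.
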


\begin{proof}
Suppose that $(x,k)\in G\setminus H$, i.e., $x\in H$ and $k\in\Z\setminus\{0\}$.
Then
\[
\{(y,0)(x,k)(y,0)^{-1} : y\in H\}=\{ (y+x+k\cdot(-y),k) : y\in H\}
\]
is infinite, since $\{ y+k\cdot(-y) : y\in H\}$ is infinite for every $k\in\Z\setminus\{0\}$ by a similar argument as the one used after \eqref{icc}. Thus it follows that $(G,H,1)$ satisfies the relative Kleppner condition.
Remark~\ref{relK}~(b) then implies that $(G,H,\sigma)$ always satisfies the relative Kleppner condition.
Hence, using Corollary~\ref{sut-relKlep-2} we get that (i)~$\Rightarrow$~(ii) (and also (i)~$\Rightarrow$~(iii)).
Since $G=H\rtimes \Z$ is amenable, Theorem~\ref{Murph} gives that (ii)~$\Rightarrow$~(iii).

To show the implication (ii)~$\Rightarrow$~(i), we first observe that $S$ is a normal subgroup of $G$. Hence, as in subsection~\ref{subgroups}, we get that for each $(y,n) \in G$, there exists a $*$-automorphism $\gamma_{(y,n)}$ of $C_r^*(S,\sigma'')$ satisfying
\begin{align*}
\gamma_{(y,n)}\big(\lambda_{\sigma''}(x)\big) &= \sigma((y,n),(x,0))\, \overline{\sigma\big((n\cdot x,0), (y,n)\big)}\,
\lambda_{\sigma''}(n\cdot x)\\
&= \sigma'(y,n\cdot x)\overline{\sigma'(n\cdot x, y)}\,\lambda_{\sigma''}(n\cdot x) \\
&= \lambda_{\sigma''}(n\cdot x)
\end{align*}
for all $x \in S$.

Set $\gamma=\gamma_{(0,1)}$. We then have $\gamma^n\big(\lambda_{\sigma''}(x)\big) = \lambda_{\sigma''}(n\cdot x)$ for all $n\in \Z$ and $x\in S$. Thus $n\mapsto \gamma^n$ is the $\Z$-action on $C_r^*(S,\sigma'')$ associated to the $\Z$-action on $S$ induced by the automorphism $\beta_S$ of $S$ given by $\beta_S(x) = \beta(x) = 1\cdot x$ for $x \in S$.

Assume now that (i) does not hold. Since $H$ is abelian, this means that $S$ is non-trivial. Since $\beta$ is aperiodic, $\beta_S$ is also aperiodic. Now, since $\sigma''$ is symmetric, we have
\[
\sum_{n=1}^\infty \Bigl\lvert 1-\sigma''(n\cdot x,y)\overline{\sigma''(y,n\cdot x)}\Bigr\rvert=0
\]
for all $x,y\in S$.
Therefore, combining \cite[Corollary~11.3.4]{NS} with \cite[Theorem~11.4.2]{NS} we get that there exists a $\gamma$-invariant state $\varphi$ on $C^*_r(S,\sigma'')$ different from the canonical tracial state $\tau''$.
Since $C^*_r(S,\sigma'')$ is commutative, $\varphi$ is automatically tracial.
Moreover, we have
\[
\varphi\Big(\gamma_{(y,n)}\big(\lambda_{\sigma''}(x)\big)\Big)= \varphi\big(\lambda_{\sigma''}(n\cdot x)\big)=\varphi\big(\gamma^n(\lambda_{\sigma''}(x))\big)=\varphi(\lambda_{\sigma''}(x))
\]
for all $(y,n)\in G$ and all $x\in S$. It follows then by linearity and continuity that $\varphi$ is invariant under each $\gamma_{(y,n)}$. If we now use subsection~\ref{subgroups} to decompose $C_r^*(G,\sigma)$ as
\[
C^*_r(G,\sigma)\simeq C_r^*\Big(C^*_r(S,\sigma''), G/S, \delta, \omega\Big)\, ,
\]
we can then conclude that $\varphi$ is $G/S$-invariant. Hence, letting $E_S$ denote the canonical conditional expectation from $C_r^*(G,\sigma)$ onto $C_r^*(S,\sigma'')$, we obtain that $\tilde\varphi:=\varphi\circ E_S$ is a tracial state on $C_r^*(G, \sigma)$, which is different from the canonical one since the restriction of $\tilde\varphi$ to $C_r^*(S, \sigma'')$ is different from $\tau''$. Thus (ii) does not hold.

To show the final assertion, assume that $H$ is countable. As $(G, H, \sigma)$ satisfies the relative Kleppner condition, Corollary~\ref{sut-torsionfree} gives that $(G,\sigma)$ is $C^*$-simple whenever $\Z=G/H$ acts on $C^*_r(H,\sigma')$ in a minimal way, i.e., whenever $\widetilde\beta$ acts minimally on $C^*_r(H,\sigma')$. The converse statement also holds, as may be seen by writing $C_r^*(G, \sigma)$ as a reduced crossed product over $C_r^*(H, \sigma')$.
\end{proof}

\begin{remark}\label{invariant-b}
In the situation of Theorem~\ref{aperiodic}, we do not know whether (iii)~$\Rightarrow$~(i), or, equivalently, whether (iii)~$\Rightarrow$~(ii). The following discussion sheds some light on this problem. Suppose that (i) does not hold, so $S$ is nontrivial, and in fact infinite. As $\sigma''$ is a coboundary, there exists a function $b\colon S\to \T$ such that $b(0)=1$ and $\sigma''(x,y) = b(x)b(y)\overline{b(x+y)}$ for all $x,y\in S$. Assume that we can choose $b$ in such a way that there exists some $m\in\Z\setminus\{0\}$ such that $b(-m\cdot x)=b(x)$ for all $x\in S$. Then $(G,\sigma)$ is not $C^*$-simple.

To verify this, we first extend $b$ to $c\colon G\to\T$ by setting
\[
c(x,n)=\begin{cases}b(x)&\text{for $x\in S$ and $n\in\Z$,}\\1&\text{for $x\in N\setminus S$ and $n\in\Z$.}\end{cases}
\]
To lighten our notation, we will just write $yn$ for an element $(y,n)\in G$ from now on.
Let then $\rho \in B^2(G,\T)$ be the coboundary associated to $c$ and set $\omega:=\sigma\overline\rho\sim\sigma$.
Note that $\omega(x,y)=1$ for all $x,y\in S$.
According to \cite[Theorem~1.5]{PR}, there is an action of $G$ on $\widehat S$ (the Pontryagin dual of $S$) given by
\begin{equation} \label{pr-action}
(yn \cdot \psi)(x) = \omega(x,(yn))\,\overline{\omega((yn),(yn)^{-1}x(yn))}\,\psi((yn)^{-1}x(yn)),
\end{equation}
for $y\in N$, $n\in \Z$ (i.e., $yn\in G$), $\psi\in\widehat S$ and $x\in S$.
Letting $1$ denote the trivial character on $S$, we then get
\[
\begin{split}
(n \cdot 1)(x)
&= \omega(x,n)\overline{\omega(n,(-n)\cdot x)}\\
&= \sigma'(x,0)\overline{c(x)c(n)}c(xn)\overline{\sigma'(0,x)}c(n)c(-n\cdot x)\overline{c(xn)}\\
&= b(-n\cdot x)\overline{b(x)}\,
\end{split}
\]
for all $n\in \Z$ and $x\in S$. Using our assumption on $b$, we thus get that $m \cdot 1= 1$.
Hence, the orbit of $1$ in $\widehat S$ under the action of $\Z$ is finite. Since $\widehat S$ is infinite, this implies that $\Z$ does not act minimally on $\widehat S$.
Hence, \cite[Theorem~1.5]{PR} gives that $(G,\omega)$ is not $C^*$-simple, and it follows that $(G,\sigma)$ is not $C^*$-simple.
Equivalently, this shows that $\widetilde\beta$ does not act minimally on $C_r^*(H,\sigma')$.

It is unclear to us whether it is always possible to choose $b$ as above.
\end{remark}

\begin{example}\label{Zn-by-Z}
Consider the case where $H=\Z^n$ and $\beta(x) = Ax$ for a matrix $A\in GL(n,\Z)$ such that $\beta$ is aperiodic. One can then deduce from \cite[Proposition~3.1]{PR} that, up to similarity, any $\sigma \in Z^2(\Z^n\rtimes_A \Z,\T)$ arises from some $\Z$-invariant $\sigma' \in Z^2(\Z^n, \T)$. Moreover, all three conditions in Theorem~\ref{aperiodic} are then equivalent. Indeed, assume (i) does not hold, i.e., $S\neq \{0\}$, and let $\omega\sim\sigma$ be such that $\omega_{|S\times S}=1$.
As $\beta$ is aperiodic, $A-I$ is not nilpotent, so \cite[Remark~3.3]{PR} gives that the action of $\Z$ on $\widehat S$ (defined as in equation \eqref{pr-action}) is not minimal. It follows then from \cite[Theorem~3.2]{PR} that $C^*_r(\Z^n\rtimes_A \Z,\omega)$ is not simple, and hence $C^*_r(\Z^n\rtimes_A \Z,\sigma)$ is not simple.
\end{example}

\subsection{Wreath products}\label{wreath}

Let $N$ and $K$ be nontrivial groups.
We recall that the \emph{wreath product} $N\wr K$ is defined as the semidirect product $\left(\bigoplus_K N\right) \rtimes K$,
where $K$ acts by (left) translation on the index set, that is, by
\[
\left(k\cdot (x_j)_{j\in K} \right)_l=x_{k^{-1}l}\,, \text{ or, equivalently, by }\, k\cdot (x_j)_{j\in K}=(x_{k^{-1}j})_{j\in K}.
\]
We start by recording a useful result.

\begin{lemma}\label{wreath relK}
The triple $(N\wr K,\bigoplus_K N,1)$ satisfies the relative Kleppner condition if and only if $K$ or $N$ is infinite.
\end{lemma}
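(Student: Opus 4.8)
Write $G = N\wr K = M\rtimes K$ where $M=\bigoplus_K N$. By Remark~\ref{relK}~b), the triple $(G,M,1)$ satisfies the relative Kleppner condition exactly when $|C_M(g)|=\infty$ for every $g\in G\setminus M$. A general element of $G\setminus M$ has the form $g=(x,k)$ with $x=(x_j)_{j\in K}\in M$ and $k\in K\setminus\{e\}$, and for $s=(s_j)_{j\in K}\in M$ one computes $s(x,k)s^{-1}=(s+x+k\cdot(-s),\,k)$ (using additive notation in each coordinate when convenient, or more carefully $s\,x\,(k\cdot s)^{-1}$ componentwise). Thus $C_M(g)$ is in bijection with the set $\{\,s+x+k\cdot(-s):s\in M\,\}$, i.e.\ a translate of the image of the "twisted difference" map $s\mapsto s-k\cdot s$ on $M$ (written multiplicatively, $s\mapsto s\,(k\cdot s)^{-1}$). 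So the whole question reduces to: \emph{for every $k\in K\setminus\{e\}$, is the image of $\varphi_k\colon M\to M$, $\varphi_k(s)=s\,(k\cdot s)^{-1}$, infinite?}

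\textbf{The two directions.} First I would prove the "if" direction by contrapositive: suppose both $N$ and $K$ are finite. Then $M=\bigoplus_K N$ is finite, so $G$ is finite, so every conjugacy class is finite and the relative Kleppner condition fails (indeed $G\setminus M$ is nonempty since $K$ is nontrivial). For the "only if" direction, assume $K$ or $N$ is infinite and fix $k\in K\setminus\{e\}$; I must show $|\mathrm{Image}(\varphi_k)|=\infty$. The key observation is that $k$ acts on the index set $K$ by left translation $j\mapsto kj$, which is a \emph{free} action (all orbits are infinite, namely copies of the cyclic group $\langle k\rangle$, which is infinite if $k$ has infinite order, and finite of size $\mathrm{ord}(k)$ otherwise — so orbits need not be infinite; I should be careful here). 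Partition $K$ into $\langle k\rangle$-orbits; on each orbit $O$, $k$ permutes the finitely-or-infinitely-many coordinates cyclically. For $s$ supported on a single orbit $O=\{j_0,j_1,\dots\}$ with $j_{i+1}=kj_i$, the element $\varphi_k(s)$ restricted to $O$ has components of the form $s_{j_i}\,(s_{j_{i-1}})^{-1}$ (indices mod $|O|$ if the orbit is finite), i.e.\ it is the "coboundary" of $s$ along the orbit. If the orbit is \emph{infinite}, then by choosing $s$ supported on $j_0$ alone with value running over all of $N\setminus\{e\}$ we get infinitely many distinct values of $\varphi_k(s)$ (the support changes or the value changes), giving the claim when $K$ is infinite (since then some — indeed any, as $\langle k\rangle$ is infinite or... no: $K$ infinite does not force infinite orbits; e.g.\ $K$ could be an infinite torsion group). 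So the argument splits honestly into cases.

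\textbf{The cases.} Case $N$ infinite: pick any $j_0\in K$ and let $s$ range over elements supported on the single coordinate $j_0$. Then $\varphi_k(s)=s\,(k\cdot s)^{-1}$ is supported on $\{j_0,kj_0\}$ with value $s_{j_0}$ at $j_0$ (since $kj_0\ne j_0$, as $k\ne e$ acts freely by left translation), so distinct choices of $s_{j_0}\in N$ give distinct elements of the image; as $N$ is infinite, the image is infinite. Case $K$ infinite (and $N$ arbitrary nontrivial): fix $x_0\in N\setminus\{e\}$ and, for each $j\in K$, let $s^{(j)}\in M$ be supported on $\{j\}$ with value $x_0$. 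Then $\varphi_k(s^{(j)})$ is supported on $\{j,kj\}$; for these to coincide for two indices $j\ne j'$ one would need $\{j,kj\}=\{j',kj'\}$, which forces $j'=kj$ and $j=kj'=k^2 j$, i.e.\ $k^2$ fixes $j$, so $k$ has order $1$ or $2$. If $k$ has order $\geq 3$ (or infinite order) this never happens, so we get infinitely many distinct images. If $k$ has order $2$, a cleaner choice is needed: take $s$ supported on a single coordinate $j$ with value $x_0$; then $\varphi_k(s)=s\,(k\cdot s)^{-1}$ has value $x_0$ at $j$ and value $x_0^{-1}$ at $kj$, and two such (for $j\ne j'$) are equal only if $j'=kj$ and $x_0=x_0^{-1}$ at the overlapping coordinate, which one checks leads to a contradiction unless $x_0=x_0^{-1}$ — so if $N$ has an element of order $\ne 2$ we are done, and if $N$ is the infinite group $\bigoplus \mathbb{Z}_2$ (say) then $N$ is infinite and we are in the previous case anyway. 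I would streamline this: when $K$ is infinite, among the infinitely many orbits of $\langle k\rangle$ on $K$ at least one is infinite \emph{unless} $k$ has finite order, in which case there are infinitely many (finite, equal-size) orbits; either way one produces infinitely many distinct finitely-supported $\varphi_k(s)$ by varying the support-orbit.

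\textbf{Main obstacle.} The only real subtlety — and the step I'd write most carefully — is the case analysis by the order of $k$ in the "$K$ infinite" direction: the naive single-coordinate choice $s=x_0\cdot\delta_j$ can fail to give distinct images when $k$ is an involution, because the two-point supports $\{j,kj\}$ can pair up. The clean fix is to exploit that $K$ infinite gives infinitely many $\langle k\rangle$-orbits regardless of $\mathrm{ord}(k)$: choosing $s$ supported on \emph{one point per orbit}, say at a chosen base point $j_O$ of each infinite family of orbits $O$, yields $\varphi_k(s)$ with support meeting exactly the orbits we selected, and supports contained in distinct orbits are automatically distinct. Thus varying over the infinitely many orbits produces infinitely many distinct elements in the image of $\varphi_k$, completing the proof that $|C_M(g)|=\infty$ for all $g\in G\setminus M$, hence that $(G,M,1)$ satisfies the relative Kleppner condition.
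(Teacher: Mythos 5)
Your proof is in substance the same as the paper's: both reduce via Remark~\ref{relK}~b) to showing $\lvert C_M(g)\rvert=\infty$ for every $g=(x,k)$ with $k\neq e$, compute the conjugate $s(x,k)s^{-1}=\bigl(s\,x\,(k\cdot s)^{-1},k\bigr)$ coordinatewise, and split into the cases ``$N$ infinite'' (vary $s$ supported at one index) and ``$K$ infinite'' (vary the support); the converse direction is the trivial one and you state it correctly.

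One step needs repair. Your reduction of $C_M(g)$ to a translate of the image of $\varphi_k(s)=s\,(k\cdot s)^{-1}$ is only valid for abelian $N$. For general $N$, two elements $s,s'$ give the same conjugate if and only if $t=(s')^{-1}s$ satisfies $k\cdot t=x^{-1}tx$, so the fibres of $s\mapsto s\,x\,(k\cdot s)^{-1}$ are cosets of $\{t:k\cdot t=x^{-1}tx\}$ rather than of $\{t:k\cdot t=t\}$, and infiniteness of the image of $\varphi_k$ does not formally imply infiniteness of $C_M(g)$. The repair is immediate: run your support computations on the actual conjugates. With $s$ supported at $j_0$, the $j_0$-coordinate of $s\,x\,(k\cdot s)^{-1}$ is $s_{j_0}x_{j_0}$, still injective in $s_{j_0}$, which settles the case $N$ infinite; and the conjugate by your $s^{(j)}$ differs from $x$ exactly on $\{j,kj\}$, so your orbit argument for $K$ infinite applies unchanged. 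Your observation that the two-point supports $\{j,kj\}$ can pair up when $k$ is an involution is correct and worth keeping; the paper's own treatment of the case $N$ finite, $K$ infinite (varying a finite subset $F$ on which the coordinates equal a fixed $h\neq e$) is terser and leaves a similar verification to the reader.
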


\begin{proof}
If $y\in (N\wr K)\setminus\bigoplus_K N$, that is, $y=\left((y_j)_{j\in K},k\right)$, where $k\neq e$,
and $x=\left((x_j)_{j\in K},e\right)\in\bigoplus_K N$, then
\[
xyx^{-1}
=\left((x_j)_{j\in K},e\right)\left((y_j)_{j\in K},k\right)\big((x_j^{-1})_{j\in K},e\big)
=\big((x_jy_jx_{k^{-1}j}^{-1})_{j\in K},k\big).
\]
If $\bigoplus_K N$ is infinite, by letting $(x_j)_{j\in K}$ vary, this takes an infinite number of values.
To see this, note first that $\bigoplus_K N$ is infinite whenever $N$ or $K$ is infinite.
If $N$ is infinite, then it suffices to fix one $l\in K$ and consider all sequences $(x_j)_{j\in K}$ with $x_j=e$ if $j\neq l$.
On the other hand, if $N$ is finite, then $K$ is infinite, so we fix a nontrivial $h\in N$,
and consider all sequences $(x_j)_{j\in K}$ such that for some finite $F\subset N$, $x_j=h$ for $j\in F$ and $x_j=e$.
\end{proof}

With a similar argument, one can show that $N\wr K$ is ICC if and only if $K$ is infinite or $N$ is ICC
(cf.~\cite[Corollary~4.2]{Preaux}).

\begin{proposition}
The wreath product $N\wr K$ is $C^*$-simple (resp.\ has the unique trace property) if and only if $N$ is $C^*$-simple (resp.\ has the unique trace property).
\end{proposition}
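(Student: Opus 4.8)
The plan is to work with the normal subgroup $H:=\bigoplus_K N$ of $G:=N\wr K$. Choosing the section $s\colon K\to G$ given by $s(k)=(e,k)$, the construction of subsection~\ref{subgroups} shows that $C^*_r(G)\cong C^*_r(H)\rtimes_r K$ is an ordinary (untwisted) reduced crossed product, the $K$-action $\beta$ being the one induced by the shift of coordinates. Moreover, $C^*_r(H)$ is the inductive limit $\varinjlim_F C^*_r(N^F)$ over the finite subsets $F\subseteq K$, with $C^*_r(N^F)\cong C^*_r(N)^{\otimes F}$ (minimal tensor powers); writing $\iota_k\colon C^*_r(N)\hookrightarrow C^*_r(H)$ for the embedding onto the $k$-th coordinate subalgebra, one has $\beta_k\circ\iota_l=\iota_{kl}$.

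For the implication that $C^*$-simplicity (resp.\ the unique trace property) of $N$ passes to $N\wr K$, I would first note that a nontrivial group with either property is infinite, so Lemma~\ref{wreath relK} gives that $(G,H,1)$ satisfies the relative Kleppner condition. Since a finite direct product of groups is $C^*$-simple (resp.\ has the unique trace property) exactly when each factor is, an iteration shows that each $C^*_r(N^F)$ is $C^*$-simple (resp.\ has the unique trace property), and hence so is $C^*_r(H)=\varinjlim_F C^*_r(N^F)$, because these properties pass to inductive limits of groups (cf.\ \cite{BdH}). Corollary~\ref{sut-relKlep}(ii) (resp.\ (i)) then yields that $N\wr K$ is $C^*$-simple (resp.\ has the unique trace property).

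For the converse I would invoke the standard fact that simplicity of $C^*_r(H)\rtimes_r K$ forces $K$ to act minimally on $C^*_r(H)$, and that uniqueness of its tracial state forces the canonical trace of $C^*_r(H)$ to be its only $K$-invariant tracial state (in both cases via the canonical faithful conditional expectation $E\colon C^*_r(G)\to C^*_r(H)$: a proper nonzero $K$-invariant closed ideal $\mathcal J$ of $C^*_r(H)$ produces the proper nonzero closed ideal $\mathcal J\rtimes_r K$ of $C^*_r(G)$ since $E(\mathcal J\rtimes_r K)\subseteq\mathcal J$, and a $K$-invariant tracial state $\psi$ of $C^*_r(H)$ produces the tracial state $\psi\circ E$ of $C^*_r(G)$). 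It then remains to check two things. If $C^*_r(N)$ has a proper nonzero closed ideal $J$, then the closed ideal $\mathcal J$ of $C^*_r(H)$ generated by $\bigcup_{k\in K}\iota_k(J)$ is $K$-invariant and nonzero by construction, and it is proper because the quotient map $q\colon C^*_r(N)\to C^*_r(N)/J$ induces a surjective $*$-homomorphism $\varinjlim_F q^{\otimes F}$ from $C^*_r(H)$ onto the inductive limit $\varinjlim_F (C^*_r(N)/J)^{\otimes F}$, which is nonzero (an inductive limit of nonzero unital C$^*$-algebras with unital, hence injective, connecting maps) and whose kernel contains every $\iota_k(J)$, hence contains $\mathcal J$. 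If instead $C^*_r(N)$ has a tracial state $\varphi$ other than its canonical one, then, $\varphi$ being used on every tensor factor, the resulting product state on $C^*_r(H)=\varinjlim_F C^*_r(N)^{\otimes F}$ is a $K$-invariant tracial state distinct from the canonical one, as one sees by restricting to a single tensor factor. This shows that failure of $C^*$-simplicity (resp.\ of the unique trace property) for $N$ forces the same failure for $N\wr K$. (For the trace part one may alternatively observe that $\bigoplus_K AR(N)$ is a normal amenable subgroup of $N\wr K$, nontrivial as soon as $AR(N)$ is, and appeal to \cite{BKKO}.)

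I expect the only genuinely delicate point to be the propagation of a defect of $C^*_r(N)$ to a $K$-\emph{invariant} defect of the infinite tensor power $C^*_r(H)$ — that is, the properness of the $K$-invariant ideal $\mathcal J$ (resp.\ the non-triviality of the $K$-invariant product trace) — which is exactly what the $*$-homomorphism $\varinjlim_F q^{\otimes F}$ (resp.\ the tensor-product state) takes care of; everything else is routine bookkeeping with the results recalled above.
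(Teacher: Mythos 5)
Your proof of the forward implication (from $N$ to $N\wr K$) is essentially the paper's: both pass the property from $N$ to $\bigoplus_K N$ via finite direct products and inductive limits, note that a nontrivial $C^*$-simple (or unique-trace) group is infinite so that Lemma~\ref{wreath relK} applies, and then invoke Corollary~\ref{sut-relKlep}. The converse is where you genuinely diverge. The paper disposes of it in one line by citing the theorem of Breuillard--Kalantar--Kennedy--Ozawa that a normal subgroup of a $C^*$-simple group is again $C^*$-simple, applied first to $\bigoplus_K N\trianglelefteq N\wr K$ and then to the coordinate copy of $N$ inside $\bigoplus_K N$ (with the amenable-radical characterization playing the analogous role for the unique trace property). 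You instead argue directly at the $C^*$-level through the crossed-product decomposition: a proper nonzero ideal $J$ of $C^*_r(N)$ generates a $K$-invariant ideal of $C^*_r\big(\bigoplus_K N\big)$ whose properness you certify via the surjection onto $\varinjlim_F (C^*_r(N)/J)^{\otimes F}$, a second trace on $C^*_r(N)$ yields a $K$-invariant infinite tensor product trace, and the canonical conditional expectation transports both defects up to $C^*_r(N\wr K)$. All of these steps are sound. Your route is longer but self-contained modulo standard facts about reduced crossed products and minimal tensor products, whereas the paper's rests on a substantial external theorem; your parenthetical alternative for the trace direction via the normal amenable subgroup $\bigoplus_K AR(N)$ is in fact the closest in spirit to what the paper actually does.
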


\begin{proof}
If $N\wr K$ is $C^*$-simple, then the normal subgroup $\bigoplus_K N$ is $C^*$-simple \cite[Theorem~3.14]{BKKO},
and (the canonical copy of) $N$ is normal in $\bigoplus_K N$, so it is $C^*$-simple as well.

If $N$ is $C^*$-simple, then the direct sum $\bigoplus_K N$ is $C^*$-simple \cite[Corollary~II.8.2.5]{Bla} and $N$ is infinite, so it follows from Lemma~\ref{wreath relK} and Corollary~\ref{sut-relKlep} that $N\wr K$ is $C^*$-simple.

A similar argument works for the unique trace property.
\end{proof}

A description of $H^2(N\wr K,\T)$ may be deduced from a result of Tappe, \cite[Corollary on~p.~2]{Tappe}, where he deals with a more general situation: he lets $K$ acts on an index set $I$, while we only consider the case where $I=K$ and $K$ acts on itelf by (left) translation.

Let $H^2\left(\bigoplus\nolimits_K N,\T\right)^K$ denote the elements in $H^2(\bigoplus\nolimits_K N, \T)$ that are invariant under the natural action of $K$ induced from its action on $\bigoplus\nolimits_K N$. Then Tappe's result says first that
\[
H^2(N\wr K,\T)\ \simeq\ H^2(K,\T) \times H^2\left(\bigoplus\nolimits_K N,\T\right)^K.
\]
Moreover, when $K$ has no nontrivial elements of order two, as will be the case in the examples we consider, the summand $H^2\left(\bigoplus\nolimits_K N,\T\right)^K$ may be described as follows.
Let $B(N,N)$ denote the group of bihomomorphisms from $N \times N$ into $\T$ (which is isomorphic to the dual group of $H_1(N) \otimes_\Z H_1(N)$).
Further, let $I_2$ denote the family of all subsets of $K$ containing two distinct elements.
Then $K$ acts on $I_2$ by translation, and we let $I_2/K$ denote the associated orbit space.
We then have
\[
H^2\left(\bigoplus\nolimits_K N,\T\right)^K \simeq\ H^2(N,\T) \times \prod_{I_2/K} B(N,N)\,.
\]
When $K$ has nontrivial elements of order two, an extra summand $C$ appears, and we refer to Tappe's article for further details.
Summarizing this discussion, we have:

\begin{lemma}\label{wreath cohomology}
Assume $K$ has no nontrivial element of order two. Then
\[
H^2(N\wr K,\T)\ \simeq\ H^2(K,\T) \times H^2(N,\T) \times \prod_{I_2/K} B(N,N)\,.
\]
\end{lemma}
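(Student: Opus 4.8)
The statement to be proved is the special case of Tappe's theorem \cite{Tappe} where the index set is $K$ itself acting by left translation, specialized further to the case where $K$ has no nontrivial element of order two. My plan is to simply quote Tappe's general result and then carefully match up the combinatorial data, since the work has essentially all been done in the preceding paragraphs; the only genuine content is bookkeeping about orbits of $2$-element subsets versus orbits of ordered pairs.

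First I would recall the statement of \cite[Corollary on p.~2]{Tappe} in the generality we need: for a group $K$ acting on an index set $I$ and a group $N$, there is an isomorphism
\[
H^2(N\wr_I K,\T)\ \simeq\ H^2(K,\T)\times H^2\!\left(\bigoplus\nolimits_I N,\T\right)^{K},
\]
and then the structural description of the second factor. Taking $I=K$ with the left-translation action, $\bigoplus_I N = \bigoplus_K N$, and this yields the first displayed formula already recorded in the excerpt. The second step is to unpack $H^2\!\left(\bigoplus_K N,\T\right)^{K}$. Here one uses the Künneth-type decomposition of the Schur multiplier of a (restricted) direct sum: $H^2(\bigoplus_K N,\T)$ splits as a ``diagonal'' part $\prod_K H^2(N,\T)$ together with a ``pairing'' part indexed by unordered pairs of distinct indices, each contributing a copy of $B(N,N)$, the bihomomorphisms $N\times N\to\T$. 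Taking $K$-invariants, the diagonal part collapses to a single $H^2(N,\T)$ (since $K$ acts transitively on $K$), and the pairing part collapses to $\prod_{I_2/K} B(N,N)$, where $I_2$ is the set of $2$-element subsets of $K$ and $I_2/K$ the orbit space under translation. This gives
\[
H^2\!\left(\bigoplus\nolimits_K N,\T\right)^{K}\ \simeq\ H^2(N,\T)\times \prod_{I_2/K} B(N,N),
\]
and substituting into the first formula yields the claim.

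The point where the hypothesis ``$K$ has no nontrivial element of order two'' enters — and the step I expect to require the most care — is in identifying the pairing part with unordered pairs rather than ordered ones, and in ensuring no extra summand $C$ (coming from the antisymmetric self-pairing $B(N,N)^{\mathrm{alt}}$ or $H_1(N)\wedge H_1(N)$ attached to fixed $2$-element sets) survives. A nontrivial involution $k\in K$ stabilizes the $2$-element set $\{j,kj\}$ while swapping its two elements, so it acts on the corresponding $B(N,N)$ by transposing the two variables; the invariants under such a swap are the symmetric bihomomorphisms, and there is a residual contribution from how these glue — this is precisely the term $C$ that Tappe isolates. When $K$ is torsion-free of this kind (no order-two elements), every such set-stabilizer acts trivially on the relevant factor, $I_2/K$ is a free orbit space in the appropriate sense, and $C$ vanishes. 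I would therefore state explicitly that we invoke Tappe's result under his hypothesis guaranteeing $C=0$, rather than re-deriving the Schur multiplier computation from scratch; this keeps the proof to essentially a reference plus the transparent identification $I_2/K$ for the translation action, and a remark that $B(N,N)\cong \widehat{H_1(N)\otimes_\Z H_1(N)}$ as already noted in the text.

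Since all the ingredients — the form of Tappe's corollary, the isomorphism $B(N,N)\cong\widehat{H_1(N)\otimes_\Z H_1(N)}$, and the description of the $K$-action on $I_2$ — have been set up in the paragraph immediately preceding the lemma, the proof itself is short: it amounts to the sentence ``This is the special case $I=K$ of Tappe's corollary, together with the observation that under the left-translation action the orbits of ordered pairs of distinct elements of $K$ are classified, modulo the transposition, by $I_2/K$, and that the absence of order-two elements in $K$ forces Tappe's extra summand to be trivial.''
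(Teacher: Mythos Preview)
Your proposal is correct and matches the paper's approach exactly: the paper gives no separate proof for this lemma, presenting it instead as a summary (``Summarizing this discussion, we have:'') of the paragraphs immediately preceding it, which invoke Tappe's corollary, record the splitting $H^2(N\wr K,\T)\simeq H^2(K,\T)\times H^2(\bigoplus_K N,\T)^K$, describe the $K$-invariant part via $H^2(N,\T)$ and copies of $B(N,N)$ indexed by $I_2/K$, and note that the extra summand $C$ appears only when $K$ has elements of order two. Your write-up is, if anything, more detailed than what the paper provides.
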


We now consider the situation where $N$ is abelian and $K=\Z$. Then $H:=\bigoplus_\Z N$ is abelian and the action of $K=\Z$ on $H$ clearly arises from an aperiodic automorphism of $H$. Hence the wreath product $N\wr\Z = H\rtimes \Z$ fits within the set-up of the previous subsection. 
If $\omega$ is a $2$-cocycle on $H= \bigoplus_\Z N$ which is invariant under the action of $\Z$, then $\check\omega$ will denote the induced $2$-cocycle on $N\wr\Z$ given by
\[
\check\omega\Big(\big((x_j)_{j\in \Z}\,,m\big), \big((y_j)_{j\in \Z}\,,n\big)\Big)=\omega\big((x_j)_{j\in \Z}\,,m\cdot (y_j)_{j\in \Z}\big).
\]
Since $H^2(\Z,\T)=\{1\}$, every $2$-cocycle on $N\wr\Z$ is similar to one that arises this way.

\begin{proposition}\label{wreath-ab-Z}
Assume that $N$ is abelian and let $\sigma$ be a $2$-cocycle on $N\wr\Z$. Let $\sigma'$ denote its restriction to $H=\bigoplus_\Z N$.
Consider the following conditions:
\begin{itemize}\itemsep1pt
\item[(i)] $(H,\sigma')$ satisfies Kleppner's condition.
\item[(ii)] $(N\wr\Z,\sigma)$ has the unique trace property.
\item[(iii)] $(N\wr\Z,\sigma)$ is $C^*$-simple.
\end{itemize}
Then we have \textup{(i)}~$\Longleftrightarrow$~\textup{(ii)}~$\Longrightarrow$~\textup{(iii)}.
Moreover, if $N$ is countable, then \textup{(iii)} holds if and only if the associated action of $\Z$ on $C_r^*(H,\sigma')$ is minimal.
\end{proposition}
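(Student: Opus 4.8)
The plan is to recognize Proposition~\ref{wreath-ab-Z} as essentially a special case of Theorem~\ref{aperiodic} and to carry out the small reduction needed to put ourselves in its setting. With $H=\bigoplus_\Z N$ and $\Z$ acting by (left) translation, $N\wr\Z$ is precisely the semidirect product $H\rtimes\Z$ studied in subsection~\ref{aperiodic-autos}, and the associated automorphism $\beta$ of $H$ is the shift, so the only real work is to verify the standing hypotheses of subsection~\ref{aperiodic-autos} and Theorem~\ref{aperiodic}.

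First I would reduce to the case in which the cocycle on $H\rtimes\Z$ is induced from a $\Z$-invariant cocycle on $H$. For this I would invoke the fact recorded just before the statement --- a consequence of $H^2(\Z,\T)=\{1\}$ --- that $\sigma$ is similar to $\check\omega$ for some $\Z$-invariant $\omega\in Z^2(H,\T)$. Since $C^*$-simplicity and the unique trace property for a pair depend only on the similarity class of its cocycle, and since the restriction to $H\times H$ of a coboundary on $G$ is again a coboundary (so whether $(H,\sigma|_{H\times H})$ satisfies Kleppner's condition depends only on $[\sigma]\in H^2(G,\T)$), I may replace $\sigma$ by $\check\omega$ throughout and thus assume from the outset that $\sigma=\check{\sigma'}$ with $\sigma'$ being $\Z$-invariant; note that then $\sigma'$ is indeed the restriction of $\sigma$ to $H\times H$, as in the statement. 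Next I would observe that the shift $\beta$ of the infinite abelian group $H=\bigoplus_\Z N$ is aperiodic: a nonzero element of $\bigoplus_\Z N$ has finite support, hence cannot be periodic, so $k\cdot x=x$ forces $x=0$ when $k\neq 0$. At this point $H,\beta,G=N\wr\Z,\sigma,\sigma'$ satisfy the hypotheses of Theorem~\ref{aperiodic}, and the chain (i)~$\Longleftrightarrow$~(ii)~$\Longrightarrow$~(iii) follows immediately. For the final sentence I would note that $H=\bigoplus_\Z N$ is countable exactly when $N$ is, apply the last clause of Theorem~\ref{aperiodic}, and remark that the $*$-automorphism $\widetilde\beta$ appearing there --- the one with $\widetilde\beta(\lambda_{\sigma'}(x))=\lambda_{\sigma'}(\beta(x))$ --- is exactly the generator of the action of $\Z=G/H$ on $C^*_r(H,\sigma')$ referred to in the statement.

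The one point that needs genuine care --- the main obstacle --- is checking that the reduction to an induced cocycle does not disturb the minimality assertion: when $\sigma$ is replaced by the similar cocycle $\check\omega$, the restriction $\sigma'$ is replaced by the $\Z$-invariant $\omega$, but the original $\sigma'$ itself need not be $\Z$-invariant. To deal with this I would use the description of the $\Z$-action from subsection~\ref{subgroups}: it is implemented by conjugation by the unitaries $\lambda_\sigma(s(k))$, and a similarity isomorphism $\lambda_\sigma(g)\mapsto c(g)\lambda_{\check\omega}(g)$ intertwines conjugation by $\lambda_\sigma(s(k))$ with conjugation by $\lambda_{\check\omega}(s(k))$ (the scalar being absorbed into the inner automorphism). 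Hence this isomorphism maps the $\Z$-action on $C^*_r(H,\sigma')$ onto the corresponding one on $C^*_r(H,\omega)$ and, in particular, preserves the presence or absence of nontrivial invariant ideals, so minimality is genuinely a similarity invariant. Everything else is bookkeeping, the substantive analytic work having already been done once and for all in the proof of Theorem~\ref{aperiodic} (through Corollary~\ref{sut-relKlep-2}, Theorem~\ref{Murph}, Corollary~\ref{sut-torsionfree}, and the cited results from \cite{NS}).
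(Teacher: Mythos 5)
Your proof is correct and follows essentially the same route as the paper: reduce to an induced cocycle $\check\omega$ with $\omega\in Z^2(H,\T)$ a $\Z$-invariant cocycle similar to $\sigma'$ via the restricted coboundary, then apply Theorem~\ref{aperiodic}. The only difference is that you spell out the aperiodicity of the shift and the similarity-invariance of minimality of the $\Z$-action, which the paper leaves implicit; both checks are valid.
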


\begin{proof}
By Tappe's result mentioned above, there exists an invariant $\omega\in Z^2(H,\T)$ such that $\sigma$ is similar to $\check\omega$ via some coboundary $\rho \in B^2(N\wr \Z, \T)$. Then $\sigma'$ is similar to $\omega$ via $\rho_{|H\times H}$, and the result follows from Theorem~\ref{aperiodic}.
\end{proof}

In concrete cases, it is possible to be more specific. We illustrate this by choosing first $N=\Z$, then $N=\Z_2$.

\subsubsection{The group $\Z\wr \Z$}

First, before we discuss $C^*$-simplicity and the unique trace property of $\Z\wr \Z$, we compute its $2$-cocycles up to similarity, by using results of the previous subsections.
Since the wreath product $\Z\wr\Z$ is given as $\left(\bigoplus\nolimits_\Z \Z\right) \rtimes \Z$, we first look at the group $\bigoplus_\Z \Z=\bigoplus_{-\infty}^\infty \Z$ and its second cohomology group.
The elements of $\bigoplus_\Z \Z$ are sequences $x=(x_j)_{j=-\infty}^\infty$, where $x_j\in\Z$ for all $j\in\Z$, and $x_j\neq 0$ only for finitely many $j$'s.
For each $k\in\Z$, we will let $e_k$ denote the sequence in $\bigoplus_\Z \Z$ where $(e_k)_j=\delta_{jk}$.
Gelfand theory gives that the group C$^*$-algebra of $\bigoplus_\Z \Z$ is isomorphic to $C(\T^{\Z})$, where $\T^{\Z}$ denotes the infinite-dimensional torus $\prod_{j\in \Z} \T$.
When $\sigma' \in Z^2(\bigoplus_\Z \Z, \T)$ is not similar to $1$, we may therefore think of $C_r^*(\bigoplus_\Z \Z, \sigma')$ as a noncommutative infinite-dimensional torus.

Standard properties of group cohomology give that
\begin{equation}\label{torus cohomology}
\begin{split}
H^2\Big(\bigoplus_{-\infty}^\infty\Z,\T\Big)
&=H^2\Big(\varinjlim \bigoplus_{-n}^n \Z,\T\Big)
=\varprojlim H^2\Big(\bigoplus_{-n}^n \Z,\T\Big)\\
&=\varprojlim \T^{\frac{1}{2}n(n-1)}
=\prod_I\T,
\end{split}
\end{equation}
where the index set $I$ is $\{(j,k)\in\Z^2 \mid j<k\}$.
It follows that every element of $Z^2\Big(\bigoplus_\Z \Z,\T\Big)$ is similar to one of the form
\begin{equation}\label{torus matrix}
\sigma_\theta\Big((x_j)_{j\in\Z},(y_j)_{j\in\Z}\Big)=\prod_{j<k}e^{2\pi i\,\theta_{j,k}\,x_jy_k},
\end{equation}
where $\theta=(\theta_{j,k})$ is an upper triangular $\Z\times\Z$-matrix with $\theta_{j,k}\in[0,1)$ whenever $j<k$.

As $\bigoplus_\Z \Z$ is abelian,
$(\bigoplus_\Z \Z,\sigma_\theta)$ is $C^*$-simple (resp.\ has the unique trace property) if and only if Kleppner's condition holds for $(\bigoplus_\Z \Z,\sigma_\theta)$.
It is not easy to express this condition in terms of $\theta$ (this is already the case when considering $\bigoplus_{j=1}^n \Z = \Z^n$ for finite $n\geq 4$).
However, we remark that if Kleppner's condition holds for $(\bigoplus_\Z \Z,\sigma_\theta)$, then for all $k\geq 1$,
the subgroup $S_k$ generated by $\{e^{2\pi i\theta_{j,k}},e^{2\pi i\theta_{k,j}}:j\in\Z\}$ must be dense in $\T$.
Indeed, if this is not the case, there exist $k,m\geq 1$ such that $(S_k)^m=\{1\}$, and then $m\,e_k$ is $\sigma$-regular.
Moreover, as opposed to the situation for finite direct sums of $\Z$,
Kleppner's condition may hold even when all entries $\theta_{jk}$ of $\theta$ are rational, cf.~Example~\ref{exmp ZwrZ}~(d).

\medskip

For a given $\theta$ as above, consider the homomorphism
\[
T_\theta\colon \bigoplus_{-\infty}^\infty\Z\longrightarrow\prod_{-\infty}^\infty\T
\]
defined as the composition
\[
\bigoplus_{-\infty}^\infty\Z
\longrightarrow\bigoplus_{-\infty}^\infty\R
\longrightarrow\prod_{-\infty}^\infty\R
\longrightarrow\prod_{-\infty}^\infty\T,
\]
where the first map is the inclusion map, 
the middle one is the map $x\mapsto (\theta-\theta^*)x$,
where $\theta^*$ denotes the transpose of $\theta$, and the third is the quotient map, mapping $(r_k)_{k\in \Z} \in \prod_\Z\R$ to $\big(e^{2\pi i \, r_k}\big)_{k\in \Z} \in \prod_\Z\T$.
Then $(\bigoplus_\Z \Z,\sigma_\theta)$ satisfies Kleppner's condition if and only if $T_\theta$ is injective.
Indeed, $x$ is $\sigma_\theta$-regular if and only if $\sigma(x,e_k)=\sigma(e_k,x)$ for all $k\in\Z$, i.e.,
if and only if
\[
1= \overline{\sigma(x,e_k)}\sigma(e_k,x)=\prod_{j<k}e^{2\pi ix_j\theta_{j,k}}\prod_{k<l}e^{-2\pi ix_l\theta_{k,l}}=e^{2\pi i\, e_k^*(\theta-\theta^*)x}
\]
for all $k\in \Z$.
That is, the kernel of $T_\theta$ consists precisely of all the $\sigma_\theta$-regular elements.

\medskip

Next, we consider
\[
\Z\wr\Z=\left(\bigoplus\nolimits_\Z \Z\right) \rtimes \Z,
\]
where we recall that $\Z$ acts on $\bigoplus\nolimits_{\Z} \Z$ by
\[
\left(n\cdot (x_j)_{j\in\Z}\right)_k=x_{k-n}.
\]
In particular, $n\cdot e_k = e_{k+n}$ for $k,n \in \Z$.
The $2$-cocycle $\sigma_\theta$ on $\bigoplus_\Z \Z$ is invariant under the induced action of $\Z$ if and only if for all integers $j<k$ and $n$ we have
\[
e^{2\pi i\,\theta_{j,k}}=\sigma(e_j,e_k)
=\sigma(n\cdot e_j,\,n\cdot e_k)
=\sigma(e_{j+n},e_{k+n})=e^{2\pi i\, \theta_{j+n,k+n}}.
\]
That is, $\sigma_{\theta}$ is invariant if and only if $\theta_{jk}=\theta_{j+n,k+n}$ for all integers $j<k$ and $n$, i.e., if and only if the matrix $\theta$ is constant on its diagonals.
Setting $\theta_m=\theta_{0, m}$ for each integer $m \geq 1$, this means that we have $\theta_{j,k}= \theta_{k-j}$ when $j<k$ and is $0$ otherwise.
It follows from Lemma~\ref{wreath cohomology} that
\[
H^2(\Z\wr\Z,\T)\simeq H^2\left(\bigoplus\nolimits_\Z \Z,\T\right)^\Z\simeq\prod_{m=1}^{\infty}\T.
\]
Hence, any element of $Z^2(\Z\wr\Z,\T)$ is, up to similarity, of the form $\check\sigma_\theta$, where
\begin{equation}\label{check-sigma}
\check\sigma_\theta\Big(\big((x_j)_{j\in\Z}, \,n\big), \big((y_j)_{j\in\Z}, \,n'\big)\Big) = \sigma_\theta\Big( (x_j)_{j\in\Z}\,, \,n\cdot (y_j)_{j\in\Z}\Big)
\end{equation}
and $\theta$ is an upper triangular $\Z\times\Z$-matrix which is constant on its diagonals, i.e., such that $\theta_{j,k}= \theta_{k-j}$ when $j<k$ for some sequence $\{\theta_m\}_{m\in \N}$ in $[0,1)$.

Applying Proposition~\ref{wreath-ab-Z} we get:
\begin{proposition}\label{propZwrZ}
Assume that $\theta$ is constant on its diagonals and $\check\sigma_\theta$ is as in \eqref{check-sigma}.
Then $(\Z\wr\Z,\check\sigma_\theta)$ has the unique trace property if and only if $(\bigoplus_\Z \Z,\sigma_\theta)$ satisfies Kleppner's condition, which implies that $(\Z\wr\Z,\check\sigma_\theta)$ is $C^*$-simple.
\end{proposition}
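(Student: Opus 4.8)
The plan is to obtain Proposition~\ref{propZwrZ} as an immediate specialization of Proposition~\ref{wreath-ab-Z} to the case $N=\Z$. First I would note that $\Z$ is abelian and that $\check\sigma_\theta$ is a genuine element of $Z^2(\Z\wr\Z,\T)$: the computation preceding \eqref{check-sigma} shows that $\sigma_\theta$ is invariant under the $\Z$-action on $\bigoplus_\Z\Z$ precisely when $\theta$ is constant on its diagonals, which is our standing hypothesis. Hence Proposition~\ref{wreath-ab-Z} applies with $\sigma=\check\sigma_\theta$.

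The only point requiring a line of verification is that the restriction $\sigma'$ of $\check\sigma_\theta$ to $H\times H$, where $H:=\bigoplus_\Z\Z$, equals $\sigma_\theta$. Writing a generic element of $H$ as $x$ and using \eqref{check-sigma} with vanishing $\Z$-component, $\check\sigma_\theta\big((x,0),(y,0)\big)=\sigma_\theta\big(x,0\cdot y\big)=\sigma_\theta(x,y)$. Granting this identification, the equivalence (i)~$\Leftrightarrow$~(ii) of Proposition~\ref{wreath-ab-Z} is exactly the asserted equivalence between the unique trace property of $(\Z\wr\Z,\check\sigma_\theta)$ and Kleppner's condition for $(\bigoplus_\Z\Z,\sigma_\theta)$, while the implication (i)~$\Rightarrow$~(iii) delivers the stated consequence that $C^*$-simplicity then follows.

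I expect no genuine obstacle here, since all the substance has already been absorbed into Theorem~\ref{aperiodic}, from which Proposition~\ref{wreath-ab-Z} was deduced: that argument rested on the relative Kleppner condition for $(\Z\wr\Z,\bigoplus_\Z\Z,1)$ via Lemma~\ref{wreath relK} and Corollary~\ref{sut-relKlep-2}, on amenability of $\Z\wr\Z$ together with Theorem~\ref{Murph}, and on the construction (via \cite{NS}) of an extra invariant tracial state for the reverse direction. A self-contained alternative would be to invoke Theorem~\ref{aperiodic} directly, taking $H=\bigoplus_\Z\Z$ with the aperiodic shift automorphism $\beta$ (so $\beta(e_k)=e_{k+1}$) and $\sigma'=\sigma_\theta$; its hypotheses hold exactly because $\theta$ is constant on its diagonals. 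It would also be worth recording, for the purpose of producing explicit examples, that Kleppner's condition for $(\bigoplus_\Z\Z,\sigma_\theta)$ amounts to injectivity of the homomorphism $T_\theta$ introduced above.
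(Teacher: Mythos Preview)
Your proposal is correct and follows precisely the paper's own approach: the paper states Proposition~\ref{propZwrZ} as an immediate application of Proposition~\ref{wreath-ab-Z} (the text preceding the statement reads simply ``Applying Proposition~\ref{wreath-ab-Z} we get:''), and your write-up supplies exactly the verification that the hypotheses of that proposition are met in the case $N=\Z$, $\sigma=\check\sigma_\theta$, including the identification $\sigma'=\sigma_\theta$.
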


\begin{example}\label{exmp ZwrZ}
Here we provide some insight on Kleppner's condition for $(\bigoplus_\Z \Z,\sigma_\theta)$ when the matrix $\theta$ is of the form described just before Proposition~\ref{propZwrZ}.
\begin{itemize}
\item[(a)]
First, we note that for every $k\geq 1$,
the group $S_k$ (as defined previously) coincide with the subgroup $S$ of $\T$ generated by $\{e^{2\pi i\, \theta_m}:m\in \N\}$.
Thus, density of $S$ in $\T$ is necessary (but not sufficient) for Kleppner's condition to hold for $(\bigoplus_\Z \Z,\sigma_\theta)$.
\item[(b)]
If $\theta_m\neq 0$ only for finitely many indices, then density of $S$ is also sufficient.
Clearly, in this case $S$ is dense in $\T$ if and only if it $\theta_m$ is irrational for some $m \in \N$.
Let us assume this holds, and let $n$ be the largest number for which $\theta_n$ is irrational.
Suppose that $x$ is $\sigma$-regular and assume (for contradiction) that $x$ has some nonzero terms.
Let $k$ be the largest index with $x_k\neq 0$.
Then
\[
1=\overline{\sigma_\theta(e_{n+k},x)}\sigma_\theta(x,e_{n+k})=1\cdot\prod_{j\leq k}e^{2\pi i\,\theta_{n+k-j}x_j},
\]
and only $\theta_n,\theta_{n+1},\dotsc$ appear in the expression above, so if $\theta_n$ is the only irrational number among these,
it follows that $x_k=0$, which gives a contradiction.
\item[(c)]
To see why density of $S$ in $\T$ in general is not sufficient,
take $r$ to be an irrational number in $(0,1)$,
and for $k\geq 0$ set
\[
\theta_{4k+1}=r, \quad \theta_{4k+3}=1-r, \quad \text{and} \quad \theta_{2k}=0.
\]
Then $e_1+e_3$ is $\sigma_\theta$-regular.
In fact, $e_1^*(\theta-\theta^*)=-e_3^*(\theta-\theta^*)$, i.e.,
column $1$ and $3$ of the matrix $\theta-\theta^*$ are the negative of each other.
\item[(d)]
Let $p_1<p_2<p_3<\dotsb$ denote the list of all prime numbers and define $\theta_m=\frac{1}{p_m}$ for every $m\geq 1$.

Then $(\bigoplus_\Z \Z,\sigma_\theta)$ satisfies Kleppner's condition.
Indeed, suppose that $x$ is $\sigma_\theta$-regular and choose $n$ so large that $p_n>\sum_{j\in\Z}\,\lvert x_j\rvert$.
Assume, for contradiction, that $x$ is nontrivial,
and let $k'$ and $k$ denote respectively the smallest and the largest number in the set $\{j\in \Z : x_j\neq 0\}$.
Then 
\[
1=\overline{\sigma(e_{n+k},x)}\sigma(x,e_{n+k})= e^{2\pi i\,\Big(\sum_{j=k'}^{k}\frac{x_j}{p_{n+k-j}}\Big)}
\]
and $\Big\lvert\sum_{j=k'}^{k}\frac{x_j}{p_{n+k-j}}\Big\rvert<1$ by assumption, so the sum must be $0$.
But this is not possible unless all $x_j$'s in this sum are $0$.
Indeed, one easily checks that $\frac{x_{k}}{p_n}\notin\Z\big[\{\frac{1}{p_j}:j>n\}\big]$ when $0<\lvert x_{k}\rvert<p_n$, so that we must have $x_k=0$.
Proceeding inductively, we also get $x_{k-1}=\cdots=x_{k'}=0$. Thus, $x$ must be trivial, giving a contradiction.
\end{itemize}
\end{example}

\begin{remark}
Since $G=\Z\wr\Z$ is ICC and amenable, we have $C^*S(G) \neq K(G)= Z^2(G,\T)$. Moreover, 
Proposition~\ref{propZwrZ} and Example~\ref{exmp ZwrZ} give that $C^*S(G)\neq \emptyset$.
Similarly, we have $\emptyset \neq UT(G)\neq K(G)$.
\end{remark}

\begin{remark}
When the matrix $\theta$ in Proposition~\ref{propZwrZ} is such that $(\bigoplus_\Z \Z,\sigma_\theta)$ does not satisfy Kleppner's condition, we do not know if it can happen that $\Z$ acts on $C_r^*(\bigoplus_\Z \Z,\sigma_\theta)$ in a minimal way; this would imply that $(\Z\wr\Z,\check\sigma_\theta)$ is $C^*$-simple (cf.~Proposition~\ref{wreath-ab-Z}) without having the unique trace property.
\end{remark}

\subsubsection{The lamplighter group $\Z_2\wr\Z$}

Analogously to the previous example, we start by computing the $2$-cocycles of $\Z_2\wr\Z$ up to similarity.
Let $\bigoplus_\Z \Z_2$ denote the direct sum of $\Z_2$ indexed by $\Z$.
As in \eqref{torus cohomology}, we get that $H^2(\bigoplus_\Z \Z_2,\T)\simeq\prod_I\Z_2^\times$,
where the index set $I$ is $\{(j,k)\in\Z^2 \mid j<k\}$.
We will represent its elements by $\Z\times\Z\,$-matrices of the form $\mu=\big[\mu_{jk}\big]_{j,k\in\Z}$\,,
where $\mu_{jk}=1$ whenever $j\geq k$ and $\mu_{jk}\in\{-1,1\}$ if $j<k$.
Analogously to \eqref{torus matrix}, every element of $Z^2\big(\bigoplus_\Z \Z_2\,,\T\big)$ is similar to one of the form
\[
\sigma_\mu\big((s_j)_{j\in\Z}\,,(t_j)_{j\in\Z}\big)=\prod_{j<k}\mu_{jk}^{s_jt_k}.
\]
Consider now the lamplighter group
\[
\Z_2\wr\Z=\left(\bigoplus\nolimits_\Z \Z_2\right) \rtimes \Z,
\]
where the action of $\Z$ on $\bigoplus\nolimits_\Z \Z_2$ is given by
\[
\big(n\cdot (s_j)_{j\in\Z}\big)_k=s_{k-n}\,
\]
for $k, n \in \Z$. The following mirrors the previous subsection.
The $2$-cocycle $\sigma_\mu$ of $\bigoplus_\Z \Z_2$ is invariant under the action of $\Z$ 
if and only if $\mu_{jk}=\mu_{j+n,k+n}$ for all $j<k$ and $n\in\Z$,
i.e., if the matrix $\mu$ is constant on its diagonals.
Moreover, up to similarity, every $2$-cocycle of $\Z_2\wr\Z$ is similar to a $2$-cocycle $\check\sigma_\mu$ given by
\[\check\sigma_\mu \Big(\big((s_j)_{j\in\Z}, \,n\big), \big((t_j)_{j\in\Z}, \,n'\big)\Big) = \sigma_\mu\Big( (s_j)_{j\in\Z}\,, \,n\cdot (t_j)_{j\in\Z}\Big)\,\]
for some $\mu$ which is constant on its diagonals.
In other words, we have
\[
H^2(\Z_2\wr\Z,\T)\,\simeq\, H^2\left(\bigoplus\nolimits_\Z \Z_2,\T\right)^\Z\, \simeq \, \prod_\N\Z_2^\times.
\]
We assume from now on that $\mu$ is constant on its diagonals.
C$^*$-algebras of the form $C_r^*(\bigoplus_\Z \Z_2,\sigma_\mu)$ for such $\mu$'s have been previously discussed in the literature as ``C$^*$-algebras of bitstreams'', see for example \cite[Section~12]{NS}.
Letting $\mu_n$ denote the entry of $\mu$ on its $n$'th diagonal for each integer $n\geq 1$, the associated ``bitstream'' $\{\epsilon_n\}_{n=1}^\infty \in \{0,1\}^\N$ is given by setting $\epsilon_n=0$ if $\mu_n=1$ and $\epsilon_n=1$ if $\mu_n=-1$.
Set
\[
X_\mu:=\{n\geq 1 : \epsilon_n=1\}=\{n\geq 1:\mu_n=-1\}
\]
and $Y_\mu:=X_\mu\cup (-X_\mu)=\{\pm \, n:n\in X_\mu\} \subset \Z$.
As in \cite{NS} we will say that $X_\mu$ is \emph{periodic} if $Y_\mu$ is periodic, i.e., if there exists an integer $m\geq 1$ such that $\{m+y:y\in Y_\mu\}=Y_\mu$.
It follows from \cite[Corollary~12.1.5]{NS} that $(\bigoplus_\Z \Z_2,\sigma_\mu)$ is $C^*$-simple (resp.\ has the unique trace property) if and only if $X_\mu$ is nonperiodic, in which case $C_r^*(\bigoplus_\Z \Z_2,\sigma_\mu)$ is the UHF algebra of type $2^\infty$.
Since $\bigoplus_\Z \Z_2$ is abelian, this means that $(\bigoplus_\Z \Z_2,\sigma_\mu)$ satisfies Kleppner's condition if and only if $X_\mu$ is nonperiodic.

Nonperiodic $X_\mu$'s are easy to produce. This happens for example when $X_\mu$ is finite and nonempty. Since $0\notin Y_\mu$, this is also happens when $\mu_n=-1$ for every even $n\geq 1$. On the other hand, if $\mu_n=-1$ for every odd $n\geq 1$ and $\mu_n=1$ otherwise, i.e., $X_\mu = \N\setminus 2\N$, then $X_\mu$ is periodic.

From Proposition~\ref{wreath-ab-Z} we now get:
\begin{proposition}\label{lamp-prop}
Assume that $\mu$ is constant on its diagonals. Then the ``noncommutative lamplighter'' $(\Z_2\wr\Z\,,\check\sigma_\mu)$ has the unique trace property if and only if $X_\mu$ is nonperiodic, which implies that $(\Z_2\wr\Z\,,\check\sigma_\mu)$ is $C^*$-simple.
\end{proposition}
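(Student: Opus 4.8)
The plan is to obtain Proposition~\ref{lamp-prop} as a direct specialization of Proposition~\ref{wreath-ab-Z} to the case $N=\Z_2$. First I would check that the hypotheses of that proposition are met: $\Z_2$ is abelian, and $\check\sigma_\mu$ is a $2$-cocycle on $\Z_2\wr\Z$ whose restriction to $H=\bigoplus_\Z \Z_2$ is exactly $\sigma_\mu$. Since, as established just before the statement, every $2$-cocycle on $\Z_2\wr\Z$ is similar to one of the form $\check\sigma_\mu$ with $\mu$ constant on its diagonals (and $C^*$-simplicity and the unique trace property are invariant under similarity of cocycles), working with $\check\sigma_\mu$ entails no loss of generality. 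Proposition~\ref{wreath-ab-Z} then gives that conditions (i)--(iii) there satisfy (i)~$\Leftrightarrow$~(ii)~$\Rightarrow$~(iii); that is, $(H,\sigma_\mu)$ satisfies Kleppner's condition if and only if $(\Z_2\wr\Z,\check\sigma_\mu)$ has the unique trace property, and either of these implies that $(\Z_2\wr\Z,\check\sigma_\mu)$ is $C^*$-simple.

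The remaining step is to rephrase condition~(i) in terms of $X_\mu$. This is the content of the paragraph preceding the proposition: because $\bigoplus_\Z \Z_2$ is abelian, $(\bigoplus_\Z \Z_2,\sigma_\mu)$ satisfies Kleppner's condition precisely when $C^*_r(\bigoplus_\Z \Z_2,\sigma_\mu)$ is simple (equivalently, has a unique trace), and by \cite[Corollary~12.1.5]{NS} this occurs exactly when $X_\mu$ is nonperiodic. Substituting this equivalence into the chain furnished by Proposition~\ref{wreath-ab-Z} yields the asserted statement: $(\Z_2\wr\Z,\check\sigma_\mu)$ has the unique trace property iff $X_\mu$ is nonperiodic, and in that case it is also $C^*$-simple. (If one wishes, one may add that since $\Z_2$ is countable, the last assertion of Proposition~\ref{wreath-ab-Z} also applies, so $C^*$-simplicity of $(\Z_2\wr\Z,\check\sigma_\mu)$ is in turn equivalent to minimality of the induced $\Z$-action on $C^*_r(\bigoplus_\Z \Z_2,\sigma_\mu)$; this is not needed for the stated result.)

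I do not expect any genuine obstacle here, since all the work is already done in Proposition~\ref{wreath-ab-Z} and in \cite{NS}. The only minor points to verify carefully are that the cohomological bookkeeping of the previous paragraphs (the isomorphism $H^2(\Z_2\wr\Z,\T)\simeq\prod_\N\Z_2^\times$ and the reduction to diagonally constant $\mu$) is correctly applied, and that the identification of $C^*_r(\bigoplus_\Z \Z_2,\sigma_\mu)$ with the UHF algebra of type $2^\infty$ in the nonperiodic case matches the setup of \cite[Corollary~12.1.5]{NS}; both are routine and follow from the discussion already in place.
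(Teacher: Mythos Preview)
Your proposal is correct and matches the paper's approach exactly: the paper simply writes ``From Proposition~\ref{wreath-ab-Z} we now get'' before stating Proposition~\ref{lamp-prop}, relying on the preceding identification (via \cite[Corollary~12.1.5]{NS}) of Kleppner's condition for $(\bigoplus_\Z \Z_2,\sigma_\mu)$ with nonperiodicity of $X_\mu$. Your additional remark about the last assertion of Proposition~\ref{wreath-ab-Z} is a harmless bonus but indeed not needed.
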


\begin{remark}
Suppose that $X_\mu=\N\setminus 2\N$, and let $\sigma_\mu$ be the associated $2$-cocycle.
Then $X_\mu$ is periodic, as indicated above, and
\[
\sigma_\mu(e_i+e_{i+2},x)=(-1)^{x_{i+1}}=\sigma_\mu(x,e_i+e_{i+2}).
\]
for all $x\in H=\bigoplus_\Z\Z_2$.
We can now check that $S=\langle e_i+e_{i+2} : i\in\Z \rangle\subset H$.

Given an element $x\in H$ and $i\in\{0,1\}$, define $x^i$ by
\[
(x^i)_k=\begin{cases} x_k & \text{if $k\in 2\Z+i$,} \\ 0 & \text{else.} \end{cases}
\]
Then $x=x^0+x^1$ and using that $\sigma_\mu$ is a bicharacter, we have
\[
\sigma_\mu(x,y)=\sigma_\mu(x^0+x^1,y^0+y^1)=\sigma_\mu(x^0,y^1)\sigma_\mu(x^1,y^0),
\]
since $\sigma_\mu(x^0,y^0)=\sigma_\mu(x^1,y^1)=1$ for all $x,y\in H$.

Define $b\colon H\to\T$ by
\[
b(x)=\sigma_\mu(x^0,x^1).
\]
Let $x,y\in S$ and note that in this case we have
\[
\sigma_\mu(x^1,y^0)=\sigma_\mu(x,y^0)=\sigma_\mu(y^0,x)=\sigma_\mu(y^0,x^1).
\]
We compute that
\[
\begin{split}
b(x+y) &= \sigma_\mu(x^0+y^0,x^1+y^1) \\
&= \sigma_\mu(x^0,x^1)\sigma_\mu(x^0,y^1)\sigma_\mu(y^0,x^1)\sigma_\mu(y^0,y^1) \\
&= b(x)\sigma_\mu(x,y)b(y).
\end{split}
\]
Thus, $(\sigma_\mu)_{|S\times S}$ coincides with the coboundary associated with $b$.
As is easy to check, $b$ is invariant, i.e., $b(1\cdot x) = b(x)$ for all $x \in S$. So the argument of Remark~\ref{invariant-b} applies with $m=-1$, and it follows that $(\Z_2\wr\Z,\check\sigma_\mu)$ is not $C^*$-simple.
It is possible that one could argue along the same lines whenever $X_\mu$ is periodic, but this might be combinatorially much more involved,
and we leave this as an open problem. An alternative way to proceed could be to show that $\Z$ does not act on $C_r^*(\bigoplus_\Z \Z_2,\sigma_\mu)$ in a minimal way when $X_\mu$ is periodic.
\end{remark}

\subsection{The Sanov transformation group} \label{Z2xF2}

As is well known, the two matrices
\[
\begin{bmatrix}
1 & 2 \\ 0 & 1
\end{bmatrix}
\quad\text{and}\quad
\begin{bmatrix}
1 & 0 \\ 2 & 1
\end{bmatrix}
\]
generate a free subgroup of $\op{SL}(2,\Z)$, sometimes called the Sanov subgroup.
We just denote this group $\F_2$ and its generators $v_1$ and $v_2$,
and consider the semidirect product $G=\Z^2\rtimes\F_2$ obtained via the canonical action of $\op{SL}(2,\Z)$ on $\Z^2$.
It is easy to verify that $G$ is ICC.
We use $e_1=(1,0)$ and $e_2=(0,1)$ to denote the generators of $\Z^2$.

To compute the $2$-cocycles of $G$ up to similarity, one may use Mackey type results as described in \cite[2.1-2.4]{Om2} (see also \cite[Theorem~2.1 and Proposition~2.2]{Om2b}).
Note that up to similarity, every $2$-cocycle of $\F_2$ is trivial, and every $2$-cocycle of $\Z^2$ is (uniquely) similar to one of the form
\begin{equation}\label{restricted}
\sigma_0((a_1,a_2),(b_1,b_2))=\mu_0^{\frac{1}{2}(a_1b_2-a_2b_1)}
\end{equation}
for some $\mu_0\in\T$.

One gets that every $2$-cocycle on $G$ is similar to one given by
\[
\sigma((a,x),(b,y))=\sigma_0(a,x\cdot b)g(b,x),
\]
where $\sigma_0$ is of the form \eqref{restricted}, and $g\colon \Z^2 \times \F_2 \to \T$ is a function satisfying
\begin{equation}\label{g-identities}
\begin{gathered}
g(a+b,x)=g(a,x)g(b,x), \\
g(a,xy) = g(y\cdot a,x)g(a,y),\\
g(0, x) = g(a, 1) = 1,\\
g(e_1,v_2)=g(e_2,v_1)=1.
\end{gathered}
\end{equation}
It follows that $g$ is uniquely determined by the two values $g(e_1,v_1)=\mu_1$ and $g(e_2,v_2)=\mu_2$ and
one deduces then without much trouble that $H^2(G,\T)\cong\T^3$.

We will therefore assume that $\sigma$ is of the form described above, hence is determined by $\mu_0, \mu_1, \mu_2 \in \T$,
and consider the decomposition
\[
C_r^*(G,\sigma) \simeq C_r^*\big(C_r(\Z^2, \sigma_0), \F_2, \beta, \omega\big)
\]
obtained as in subsection~\ref{subgroups}, using the section $s\colon\F_2 \to G$ given by $s(x) = (0, x)$. 
Straightforward computations give that $\omega$ is trivial and $\beta_x( \lambda_{\sigma_0}(a)) = g(a, x)\, \lambda_{\sigma_0}(x\cdot a)$ for all $a \in \Z^2$ and $x \in \F_2$.

Assume first that $\mu_0$ is nontorsion.
Then $(\Z^2,\sigma_0)$ is $C^*$-simple and has the unique trace property.
Since $\F_2$ is $C^*$-simple, it follows from Proposition~\ref{BryKed} that $(G,\sigma)$ is $C^*$-simple and has the unique trace property.

Next, we assume that $1\leq p<q$ are integers with $\gcd(p,q)=1$ and $\mu_0=e^{2\pi i p/q}$.
Then $C_r^*(\Z^2,\sigma_0)$ is a rational noncommutative $2$-torus with generators $U_1=\lambda_{\sigma_0}(e_1)$ and $U_2=\lambda_{\sigma_0}(e_2)$.
It is well known that the center $Z$ of $C_r^*(\Z^2,\sigma_0)$ is the C$^*$-subalgebra generated by $U_1^q$ and $U_2^q$, so $Z \simeq C(\T^2)$.
It is also known that $\op{Prim}(C_r^*(\Z^2,\sigma_0))$ is homeomorphic to $\T^2$ (see e.g.~\cite[Example~8.46]{Wil}).
Hence, using Remark~\ref{Prim}, we see that $\F_2$ will act on $C^*_r(\Z^2,\sigma_0)$ in a minimal way
whenever there is no proper nontrivial ideal of $Z$ which is invariant under the restriction of $\beta_x$ to $Z$ for every $x \in \F_2$.

One computes easily that
\[
\begin{split}
\beta_{v_1}(U_1^q)&
=\mu_1^qU_1^q, \\
\beta_{v_1}(U_2^q)&
=U_1^{2q}U_2^q, \\
\beta_{v_2}(U_1^q)&
=U_1^qU_2^{2q}, \\
\beta_{v_2}(U_2^q)&
=\mu_2^qU_2^q.
\end{split}
\]
Set $\nu_1=\mu_1^q$ and $\nu_2=\mu_2^q$, and define homeomorphisms $\varphi_1$ and $\varphi_2$ of $\T^2$ by
\[
\begin{split}
\varphi_1(z_1,z_2)&=(\nu_1z_1,z_1^2z_2), \\
\varphi_2(z_1,z_2)&=(z_1z_2^2,\nu_2z_2).
\end{split}
\]
Identifying $Z$ with $C(\T^2)$ in the obvious way, we get that for $i=1,2$, the restriction of $\beta_{v_i}$ to $Z$ is the map $f \mapsto f\circ \varphi_i$.
By induction we obtain that
\[
\begin{split}
\varphi_1^n(z_1,z_2)&=(\nu_1^nz_1,\nu_1^{n(n-1)}z_1^{2n}z_2), \\
\varphi_2^n(z_1,z_2)&=(\nu_2^{n(n-1)}z_1z_2^{2n},\nu_2^nz_2)
\end{split}
\]
for every $n\in\N$. Then one can use for example \cite[Theorem~6.4]{KuiNie} to deduce that if $\nu_i$ is nontorsion for some $i\in \{1,2\}$ and $(z_1,z_2)\in\T^2$,
then the sequence $(\varphi_i^n(z_1,z_2))_{n=1}^\infty$ is uniformly distributed (sometimes called equidistributed), and therefore dense, in $\T^2$. This implies that if $\nu_i$ is nontorsion, then there is no proper nontrivial ideal of $Z$ which is invariant under $\beta_{v_i}$.
Hence, it follows that $\F_2$ acts on $C^*_r(\Z^2,\sigma_0)$ in a minimal way if $\mu_1$ or $\mu_2$ is nontorsion.
Since $\F_2$ is $C^*$-simple, Proposition~\ref{BryKed}~(i) gives then that $(G,\sigma)$ is $C^*$-simple.
Note that one can easily verify that $(G, \Z^2, \sigma)$ satisfies the relative Kleppner's condition (for any $\sigma$),
so we could instead have invoked Corollary~\ref{sut-torsionfree}. 

Let now $\rho$ be a tracial state on $C_r^*(G,\sigma)$. Then one easily checks that for $m,n, \in \Z$ we have
$\rho(U_1^mU_2^n)=0$ unless both $m$ and $n$ are multiples of $q$. Letting $E_Z$ denote the canonical conditional expectation from $C_r^*(\Z^2,\sigma_0)$ onto $Z$ (see e.g. \cite{Boca}), we get that $\rho = \tilde\rho\circ E_Z$ where $\tilde\rho$ denotes the restriction of $\rho$ to $Z$.
Since $E_Z$ is tracial and equivariant with respect to the action of $\F_2$ on $C_r^*(\Z^2,\sigma_0)$ and its restricted action on $Z$, we obtain that the map $\rho \mapsto \tilde\rho$ gives a one-to-one correspondence between $\F_2$-invariant tracial states on $C^*_r(\Z^2,\sigma_0)$ and $\F_2$-invariant states on $Z$.

Suppose that $\mu_i$ is nontorsion for some $i\in \{1,2\}$. Since the sequence $(\varphi_i^n(z_1,z_2))_{n=1}^\infty$ is uniformly distributed in $\T^2$ for every $(z_1,z_2)\in\T^2$, we get from \cite[Proposition~3.7]{Furst} that $\varphi_i$ is uniquely ergodic on $\T^2$ with respect to the normalized Haar measure $\mu$, i.e., the state $\ell_\mu$ on $Z$ associated to $\mu$ is the only state on $Z$ which is invariant under the restriction of $\beta_{v_i}$ to $Z$.

So if $\mu_1$ or $\mu_2$ is nontorsion, we can conclude that $\ell_\mu$ is the only $\F_2$-invariant state on $Z$. As explained above, this implies that there is only one $\F_2$-invariant tracial state on $C^*_r(\Z^2,\sigma_0)$, namely the canonical tracial state $\tau'$. Applying Proposition~\ref{BryKed}~(ii) (or Corollary~\ref{sut-torsionfree}), we get then that $(G,\sigma)$ has the unique trace property.

Finally, suppose that $\mu_1$ and $\mu_2$ are both torsion. Considering the action of $\F_2$ on $Z$, and the associated action of $\F_2$ on $\T^2$ by homeomorphisms, one easily sees that the orbit $F$ of $(1,1)$ in $\T^2$ under this action is finite. Thus $F$ is a closed $\F_2$-invariant subset of $\T^2 \simeq \op{Prim}(C_r^*(\Z^2, \sigma_0))$. Using Remark~\ref{Prim} we get that $\F_2$ does not act on $C_r^*(\Z^2, \sigma_0)$ in a minimal way. Moreover, we obtain an $\F_2$-invariant state $\ell$ on $Z$ different from $\ell_\mu$ by setting $\ell(f) = \frac{1}{|F|} \sum_{(w_1, w_2) \in F} f(w_1, w_2)$ for $f\in C(\T^2)$. This implies that there are at least two $\F_2$-invariant tracial states on $C_r^*(\Z^2, \sigma_0)$. All in all, we arrive at the conclusion that $(G, \sigma)$ is not $C^*$-simple and does not have the unique trace property in this case.

Summarizing the above discussion, we record the following result:

\begin{proposition} Let $G= \Z^2\rtimes \F_2$ and let $\sigma \in \Z^2(G, \T)$ be determined by $\mu_0$, $\mu_1$, $\mu_2 \in \T$. Then the following conditions are equivalent:
\begin{itemize}
\item[(i)] $(G,\sigma)$ is $C^*$-simple.
\item[(ii)] $(G,\sigma)$ has the unique trace property.
\item[(iii)] At least one of $\mu_0$, $\mu_1$, $\mu_2$ is nontorsion.
\end{itemize}
\end{proposition}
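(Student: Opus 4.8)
The proposition merely records the conclusions of the case analysis carried out just above, so the plan is to assemble that analysis into a proof of the two implications \textup{(iii)}$\Rightarrow$\textup{(i)} \& \textup{(ii)} and $\neg$\textup{(iii)}$\Rightarrow$$\neg$\textup{(i)} \& $\neg$\textup{(ii)}, which together yield all three equivalences. Throughout we work with the decomposition $C_r^*(G,\sigma)\simeq C_r^*\big(C_r^*(\Z^2,\sigma_0),\F_2,\beta,\omega\big)$ with $\omega$ trivial, and we repeatedly invoke Proposition~\ref{BryKed}, which applies because $\F_2$ is $C^*$-simple (hence also has the unique trace property); alternatively, since $(G,\Z^2,\sigma)$ always satisfies the relative Kleppner condition and $\F_2$ is torsion free, one could use Corollary~\ref{sut-torsionfree} in place of Proposition~\ref{BryKed}.

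Suppose first that \textup{(iii)} holds. If $\mu_0$ is nontorsion, then $(\Z^2,\sigma_0)$ satisfies Kleppner's condition, and since $\Z^2$ is abelian it is therefore $C^*$-simple with the unique trace property; Proposition~\ref{BryKed} then gives that $(G,\sigma)$ is $C^*$-simple with the unique trace property. If $\mu_0$ is a root of unity, write $\mu_0=e^{2\pi i p/q}$ with $\gcd(p,q)=1$ (the case $\mu_0=1$ being the commutative one), so that $C_r^*(\Z^2,\sigma_0)$ is a rational rotation algebra with center $Z\simeq C(\T^2)$ and $\op{Prim}(C_r^*(\Z^2,\sigma_0))\simeq\T^2$, on which the restricted $\F_2$-action is generated by the homeomorphisms $\varphi_1,\varphi_2$ of $\T^2$ computed above, depending on $\nu_i=\mu_i^q$. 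Under \textup{(iii)} at least one $\nu_i$ is nontorsion; the explicit formulas for $\varphi_i^n$ together with Weyl-type equidistribution (\cite[Theorem~6.4]{KuiNie}) show that every $\varphi_i$-orbit is dense in $\T^2$, so by Remark~\ref{Prim} the whole space is the only closed $\F_2$-invariant subset of $\op{Prim}(C_r^*(\Z^2,\sigma_0))$, i.e.\ $\F_2$ acts minimally on $C_r^*(\Z^2,\sigma_0)$, whence $(G,\sigma)$ is $C^*$-simple by Proposition~\ref{BryKed}~(i). Unique ergodicity of $\varphi_i$ (\cite[Proposition~3.7]{Furst}) shows that the Haar state $\ell_\mu$ is the only $\F_2$-invariant state on $Z$; since $E_Z$ is tracial and $\F_2$-equivariant, $\tau'$ is the only $\F_2$-invariant tracial state on $C_r^*(\Z^2,\sigma_0)$, and Proposition~\ref{BryKed}~(ii) gives the unique trace property for $(G,\sigma)$. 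This proves \textup{(iii)}$\Rightarrow$\textup{(i)} \& \textup{(ii)}.

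Now suppose \textup{(iii)} fails, so $\mu_0,\mu_1,\mu_2$ are all torsion. Then $\mu_0$ is a root of unity, we are again in the rational-rotation situation, and $\nu_1=\mu_1^q$, $\nu_2=\mu_2^q$ are roots of unity, so the $\F_2$-orbit $F$ of $(1,1)$ in $\T^2$ is finite (the roots of unity in $\T^2$ whose order divides the least common multiple of the orders of $\nu_1$ and $\nu_2$ form a finite $\varphi_1$- and $\varphi_2$-invariant set containing $(1,1)$). Then $F$ is a proper nonempty closed $\F_2$-invariant subset of $\op{Prim}(C_r^*(\Z^2,\sigma_0))\simeq\T^2$, so by Remark~\ref{Prim} the $\F_2$-action on $C_r^*(\Z^2,\sigma_0)$ is not minimal, and Proposition~\ref{BryKed}~(i) gives that $(G,\sigma)$ is not $C^*$-simple. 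Averaging the point evaluations over $F$ gives an $\F_2$-invariant state on $Z$ distinct from $\ell_\mu$, hence, via the tracial $\F_2$-equivariant expectation $E_Z$, a second $\F_2$-invariant tracial state on $C_r^*(\Z^2,\sigma_0)$; Proposition~\ref{BryKed}~(ii) then shows that $(G,\sigma)$ does not have the unique trace property. Hence $\neg$\textup{(iii)}$\Rightarrow$$\neg$\textup{(i)} \& $\neg$\textup{(ii)}, and the three conditions are equivalent.

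The substantive ingredients — the identification of $\op{Prim}$ of the rational rotation algebra, the equidistribution of the iterates $\varphi_i^n$, and the resulting unique ergodicity — have all been established in the discussion preceding the proposition, so what remains is purely organisational and there is no genuine obstacle. The only point deserving some care is the passage, in both directions, between minimality (resp.\ invariant traces) for the $\F_2$-action on $C_r^*(\Z^2,\sigma_0)$ and closed $\F_2$-invariant subsets (resp.\ invariant states) of $\op{Prim}\simeq\T^2$: on the invariant-ideal side this uses the Dauns--Hofmann correspondence through $Z\simeq C(\T^2)$ (Remark~\ref{Prim}), and on the trace side it uses that $E_Z$ is a faithful tracial conditional expectation intertwining the two $\F_2$-actions. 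Note that Theorem~\ref{Murph} is of no help here, since $G$ is non-amenable and we have no independent control on the stable rank of $C_r^*(G,\sigma)$; instead both properties are obtained directly and separately in each case.
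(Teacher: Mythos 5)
Your proposal is correct and follows essentially the same route as the paper: the proposition is stated there as a summary of the immediately preceding case analysis (nontorsion $\mu_0$ handled via Kleppner's condition for $(\Z^2,\sigma_0)$ and Proposition~\ref{BryKed}; torsion $\mu_0=e^{2\pi ip/q}$ handled through the center $Z\simeq C(\T^2)$ of the rational rotation algebra, the homeomorphisms $\varphi_1,\varphi_2$, equidistribution/unique ergodicity when some $\nu_i$ is nontorsion, and the finite orbit of $(1,1)$ when all parameters are torsion), and your reorganisation into \textup{(iii)}$\Rightarrow$\textup{(i)}\,\&\,\textup{(ii)} and $\neg$\textup{(iii)}$\Rightarrow\neg$\textup{(i)}\,\&\,$\neg$\textup{(ii)} matches that analysis step for step, including the use of Remark~\ref{Prim} and the $\F_2$-equivariant tracial expectation $E_Z$. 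The only point worth noting is your explicit (and correct) inclusion of the degenerate case $\mu_0=1$, which the paper's discussion of the torsion case leaves implicit.
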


\subsection{Baumslag-Solitar groups}

We recall that the Baumslag-Solitar groups $BS(m,n)$ are groups with presentation $BS(m,n)=\langle a,b \mid ab^m=b^na \rangle$ for nonzero integers $m,n$.
It is well-known that $BS(m,n) \simeq BS(m',n')$ if and only if $(m',n')=(m,n)$, $(-m,-n)$, $(n,m)$, or $(-n,-m)$.
The following holds, cf.~\cite[Equation~5.3]{Rob}:
\begin{itemize}
\item[(a)] $Z(B(m,n))\simeq\Z$ if $m=n$; and $Z(B(m,n))=\{e\}$ if $m \neq n$.

\smallskip \item[(b)] $H^2(B(m,n),\T)\simeq\T$ if $m=n$; and $H^2(B(m,n),\T)=\{1\}$ if $m \neq n$.
\end{itemize}

We therefore fix some $n \geq 2$ and set $G=B(n,n)$.
Note that $Z(G)=\langle b^n \rangle\simeq\Z$, and $G/Z(G) \simeq \Z * \Z_n$ is ICC. Hence, $Z(G)=FC(G)=FCH(G)$.

Let $\varphi \colon G \to \Z^2$ be the homomorphism determined by $\varphi(a)= (1,0)$ and $\varphi(b) = (0,1)$.
Then the kernel of $\varphi$ can be described as
\[
\op{ker}\varphi = \big\langle a^ib^ja^{-i}b^{-j} : i \in \Z\setminus\{0\}, j\in\{1,2,\dotsc,n-1\} \big\rangle \, \simeq \, \F_\infty
\]
For $\omega \in Z^2(\Z^2,\T)$, define the inflation $\op{Inf}\omega \in Z^2(G,\T)$ by $\op{Inf}\omega(x,y)=\omega(\varphi(x),\varphi(y))$.

\begin{lemma}\label{BS-1}
The map $\omega \mapsto \op{Inf}\omega$ induces an isomorphism from $H^2(\Z^2,\T)$ onto $H^2(G,\T)$.
\end{lemma}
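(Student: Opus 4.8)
The plan is to derive the isomorphism from the Lyndon--Hochschild--Serre (inflation--restriction) spectral sequence of the group extension
\[
1\longrightarrow N\longrightarrow G\xrightarrow{\ \varphi\ }\Z^2\longrightarrow 1,\qquad N:=\op{ker}\varphi\simeq\F_\infty,
\]
with coefficients in the trivial module $\T$. The key structural observation is that the $E_2$-page $E_2^{p,q}=H^p(\Z^2,H^q(N,\T))$ lives in a small rectangle: since $N$ is free it has cohomological dimension $1$, so $H^q(N,\T)=0$ for $q\ge 2$; and since $\Z^2$ has cohomological dimension $2$, $H^p(\Z^2,-)=0$ for $p\ge 3$. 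In particular $E_2^{0,2}=H^2(N,\T)^{\Z^2}=0$ and $E_2^{3,0}=H^3(\Z^2,\T)=0$, which shortens the relevant exact sequence.

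Concretely, the low-degree exact sequence then reads
\[
\begin{aligned}
0\to H^1(\Z^2,\T)\to H^1(G,\T)&\xrightarrow{\ \op{res}\ }H^1(N,\T)^{\Z^2}\xrightarrow{\ d_2\ }H^2(\Z^2,\T)\\
&\xrightarrow{\ \op{Inf}\ }H^2(G,\T)\longrightarrow H^1\bigl(\Z^2,H^1(N,\T)\bigr)\longrightarrow 0 ,
\end{aligned}
\]
and one checks --- this is the standard identification of the edge homomorphism, together with a short computation against the formula $\op{Inf}\,\omega=\omega\circ(\varphi\times\varphi)$ --- that the map $H^2(\Z^2,\T)\to H^2(G,\T)$ appearing here is exactly the inflation map of the statement. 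Since $\varphi$ is the abelianization map of $G$ (indeed $G^{\mathrm{ab}}\simeq\Z^2$), we have $N=[G,G]$, so every character of $G$ is trivial on $N$ and $\op{res}$ is the zero map; hence $d_2$ is injective. By exactness it follows that $\op{Inf}$ is injective exactly when $H^1(N,\T)^{\Z^2}=0$, and $\op{Inf}$ is surjective exactly when $H^1\bigl(\Z^2,H^1(N,\T)\bigr)=0$. Thus the lemma reduces to these two vanishing assertions.

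Both are assertions about the $\Z[\Z^2]$-module $M:=H^1(N,\T)=\op{Hom}(N^{\mathrm{ab}},\T)$, with $\Z^2=G/N$ acting through the conjugation action of $G$ on $N$; by Pontryagin duality over $\Z$ (for which $\T$ is injective) they can equivalently be rephrased as statements about the co- and invariants $H_0(\Z^2,N^{\mathrm{ab}})$ and $H_1(\Z^2,N^{\mathrm{ab}})$. The explicit generating set $\{a^ib^ja^{-i}b^{-j}:i\in\Z\setminus\{0\},\ 1\le j\le n-1\}$ recorded just before the lemma is precisely the tool for this: it exhibits $N^{\mathrm{ab}}$ together with its $\Z^2$-action, and from such a presentation one can pin down $M^{\Z^2}$ and $H^1(\Z^2,M)$. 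I expect this module computation to be the main obstacle of the proof --- the reductions above are purely formal --- and the known isomorphisms $H^2(\Z^2,\T)\simeq\T$ and (from \cite{Rob}) $H^2(G,\T)\simeq\T$ can be used both as a guide and as a partial check along the way.
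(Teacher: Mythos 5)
Your proposal follows essentially the same route as the paper's proof: the paper invokes the identical Lyndon--Hochschild--Serre inflation--restriction exact sequence for $1\to N\to G\to\Z^2\to 1$ (citing the vanishing of $H^2(N,\T)$ and $H^3(\Z^2,\T)$) and reduces the lemma to the triviality of $\op{Hom}(N,\T)^{G/N}$ and $H^1\big(G/N,\op{Hom}(N,\T)\big)$, exactly the two vanishing statements you isolate. The only difference is one of emphasis: the computation you defer as ``the main obstacle'' is precisely what the paper dismisses as ``straightforward to check,'' so your argument matches the paper's in both structure and level of detail, and to be fully self-contained it would still need that module computation carried out from the listed generators of $N$.
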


\begin{proof}
Set $N=\op{ker}\varphi\simeq\F_\infty$, so that $G/N\simeq\Z^2$, and note that $H^2(N,\T)$ and $H^3(G/N,\T)$ are both trivial.
Therefore we get the following Lyndon-Hochschild-Serre inflation-restriction exact sequence (see e.g.\ \cite[Appendix~2]{PR}):
\begin{gather*}
1 \longrightarrow
\op{Hom}(G/N,\T) \overset{\op{inf}}{\longrightarrow}
\op{Hom}(G,\T) \overset{\op{res}}{\longrightarrow}
\op{Hom}(N,\T)^{G/N} \\ \longrightarrow
H^2(G/N,\T) \overset{\op{Inf}}{\longrightarrow}
H^2(G,\T) \longrightarrow
H^1(G/N,\op{Hom}(N,\T)) \longrightarrow
1
\end{gather*}
It is straightforward to check that $\op{Hom}(N,\T)^{G/N}$ and $H^1(G/N,\op{Hom}(N,\T))$ are trivial,
so we get that $\op{Inf}$ induces an isomorphism.
\end{proof}

For $\lambda\in\T$ we define $\omega_\lambda \in Z^2(\Z^2,\T)$ by $\omega_\lambda(r,s)=\lambda^{r_2s_1}$.

\begin{lemma}\label{BS-2}
Let $\lambda\in\T$ and let $\omega_\lambda\in Z^2(\Z^2, \T)$ be as above. Set $\sigma=\op{Inf}\omega_\lambda$.

Then the following conditions are equivalent:
\begin{itemize}
\item[(i)] $(G,\sigma)$ satisfies Kleppner's condition.
\item[(ii)] $(G,Z(G),\sigma)$ satisfies the relative Kleppner condition.
\item[(iii)] $\lambda$ is nontorsion.
\end{itemize}
\end{lemma}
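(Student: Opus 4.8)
The plan is to establish $(i)\Leftrightarrow(iii)$ and $(ii)\Leftrightarrow(iii)$ directly, reducing each condition to a numerical statement about $\lambda$ by means of the explicit form of $\sigma$. Writing $\varphi(x)=(\alpha(x),\beta(x))$, where $\alpha,\beta\colon G\to\Z$ are the $a$- and $b$-exponent homomorphisms, one reads off from the definitions that $\sigma(x,y)=\op{Inf}\omega_\lambda(x,y)=\lambda^{\beta(x)\,\alpha(y)}$. The group-theoretic input I would use is that, since $G/Z(G)\simeq\Z*\Z_n$ is ICC, one has $FC(G)=FCH(G)=Z(G)=\langle b^n\rangle$; hence the only nontrivial conjugacy classes of $G$ that are finite are the singletons $\{b^{nk}\}$ with $k\in\Z\setminus\{0\}$, each $b^{nk}$ being central.

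For $(i)\Leftrightarrow(iii)$: by the previous paragraph, $(G,\sigma)$ satisfies Kleppner's condition if and only if no $b^{nk}$ with $k\neq0$ is $\sigma$-regular. As $b^{nk}$ is central, it is $\sigma$-regular exactly when $\sigma(b^{nk},g)=\sigma(g,b^{nk})$ for all $g\in G$; the formula for $\sigma$ gives $\sigma(b^{nk},g)=\lambda^{nk\,\alpha(g)}$ and $\sigma(g,b^{nk})=1$, so, taking $g=a$, this holds iff $\lambda^{nk}=1$. Thus Kleppner's condition holds iff $\lambda^{nk}\neq1$ for every $k\neq0$, i.e.\ iff $\lambda$ is nontorsion. (The direction $\neg(iii)\Rightarrow\neg(i)$ is the ``exhibit a bad class'' half: if $\lambda^m=1$ then $\{b^{nm}\}$ is a finite nontrivial $\sigma$-regular conjugacy class.)

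For $(ii)\Leftrightarrow(iii)$: since $Z(G)$ is central, $C_{Z(G)}(g)=\{g\}$ is finite for every $g$, so Remark~\ref{relK}~c) applies and reduces $(ii)$ to the statement that $G\setminus Z(G)$ contains no element that is $\sigma$-regular with respect to $Z(G)$. The remaining task is then to determine, for $g\in G\setminus Z(G)$, when $\sigma(g,s)=\sigma(s,g)$ holds for all $s\in Z(G)=\langle b^n\rangle$, and to compare the answer with ``$\lambda$ nontorsion''; the ingredient needed for this is the centralizer structure of $BS(n,n)$, available from its HNN (equivalently $\Z*_{\Z}\Z$) description or from the presentation of $\ker\varphi\simeq\F_\infty$ recalled above. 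An equivalent route is to use Proposition~\ref{freeact} (and the remark following it), together with the observation that for $k\in G\setminus Z(G)$ the automorphism $\gamma_k$ of $W^*(Z(G),\sigma')\simeq W^*(\Z)$ multiplies the canonical generator $\lambda_{\sigma'}(b^n)$ by the scalar $\lambda^{-n\,\alpha(k)}$, so that freeness of $\gamma_k$ is governed by whether this scalar is $\neq1$.

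I expect the genuine difficulty to lie in the $(ii)\Leftrightarrow(iii)$ direction. Kleppner's condition is easy to handle precisely because it only ``sees'' the central elements $b^{nk}$, whose $\sigma$-regularity is a one-line computation; the relative Kleppner condition, by contrast, a priori demands control of $\sigma$-regularity with respect to $Z(G)$ for every coset representative outside $Z(G)$, and the substance of the argument is to show that the combinatorics of $BS(n,n)$ force this entire family of conditions to collapse to the single statement that $\lambda$ is nontorsion. The reduction via Remark~\ref{relK}~c) and the $(i)\Leftrightarrow(iii)$ argument are routine bookkeeping with the explicit cocycle.
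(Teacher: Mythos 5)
Your handling of (i)\,$\Leftrightarrow$\,(iii) is correct and is exactly the paper's argument: the finite nontrivial conjugacy classes are the central singletons $\{b^{nk}\}$, and $\sigma(b^{nk},g)\,\overline{\sigma(g,b^{nk})}=\lambda^{nk\,\varphi_1(g)}$ settles that half. The genuine gap is that (ii)\,$\Leftrightarrow$\,(iii) is never proved: you announce it as ``the remaining task'' and sketch two possible routes without executing either, so half the lemma is missing. Moreover, the difficulty is not located where you expect. No centralizer structure of $BS(n,n)$, no HNN description, and no input from $\ker\varphi\simeq\F_\infty$ is needed: since $Z(G)$ is central, every $x\in G$ commutes with every $s\in Z(G)$, so by Remark~\ref{relK}~c) the whole of (ii) reduces to the single scalar $\sigma(b^{dn},x)\,\overline{\sigma(x,b^{dn})}=\lambda^{dn\,\varphi_1(x)}$ being $\neq 1$ for some $d$, for each $x\in G\setminus Z(G)$. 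The paper's proof of this half is just the same two-line cocycle computation you already performed for (i)\,$\Leftrightarrow$\,(iii), applied to an arbitrary $x\in G\setminus Z(G)$, together with the one group-theoretic assertion that $\varphi_1(x)\neq 0$ for every such $x$ (and, for the converse, the witness $x=a^m$ when $\lambda^m=1$).

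That assertion is precisely what your own formula isolates --- the scalar $\lambda^{-n\,\alpha(k)}$ by which $\gamma_k$ multiplies $\lambda_{\sigma'}(b^n)$ is $\neq 1$ for some power of $b^n$ exactly when $\lambda^{n\alpha(k)}\neq 1$ --- and it is the point you must scrutinize before accepting that the family of conditions ``collapses'' to nontorsionness of $\lambda$. Elements such as $x=b$ lie in $G\setminus Z(G)$ (since $b\notin\langle b^n\rangle$ for $n\geq 2$) yet satisfy $\varphi_1(x)=0$, hence $\sigma(x,s)=\sigma(s,x)$ for all $s\in Z(G)$, while $C_{Z(G)}(x)=\{x\}$ is finite; so for such $x$ the required inequality cannot be achieved for any $\lambda$, and the hoped-for collapse does not happen by itself. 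A complete write-up of (iii)\,$\Rightarrow$\,(ii) therefore has to confront the coset $\ker\varphi_1\setminus Z(G)$ explicitly rather than defer it; note that the downstream conclusion of Proposition~\ref{BS-prop} is independently reachable via Proposition~\ref{BryKed}, since $K=\Z*\Z_n$ is $C^*$-simple, the generator $u$ acts by an irrational rotation, and hence $K$ acts minimally and uniquely ergodically on $C(\T)$.
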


\begin{proof}
Let $\varphi_1\colon G\to \Z$ be the homomorphism satisfying $\varphi_1(a) = 1$, $\varphi_1(b)=0$, so $\varphi_1(x)$ is the first coordinate of $\varphi(x)$.
Now, since the $G$-conjugacy class of any element in $G\setminus Z(G)$ is infinite, we have that $(G,\sigma)$ satisfies Kleppner's condition if and only if for each $c\in\Z\setminus\{0\}$ there is some $x \in G$ such that $\sigma(b^{cn},x) \neq \sigma(x,b^{cn})$, i.e., such that
\[
1\neq\sigma(b^{cn},x)\,\overline{\sigma(x,b^{cn})}
=\omega_\lambda(\varphi(b^{cn}),\varphi(x))\,\overline{\omega_\lambda(\varphi(x),\varphi(b^{cn}))}
=\lambda^{\varphi_1(x)cn}.
\]
It is then clear that (i) is equivalent to (iii).

Moreover, if $x \in G$, then its $Z(G)$-conjugacy class in $G$ is just $\{x\}$.
Hence, $(G,Z(G),\sigma)$ satisfies the relative Kleppner condition if and only if every $x\in G\setminus Z(G)$ is not $\sigma$-regular w.r.t.\ $Z(G)$. Consider $x\in G\setminus Z(G)$.
Then $\varphi_1(x) \neq 0$ and, as above, we have
\[
\sigma(b^{dn},x)\,\overline{\sigma(x,b^{dn})} = \lambda^{\varphi_1(x)dn}\,
\]
for all $d\in \Z$.
Hence, if $\lambda$ is nontorsion, we see that we can pick $d \in \Z$ such $\sigma(b^{dn},x) \neq \sigma(x,b^{dn})$, so $x$ is not $\sigma$-regular w.r.t.\ $Z(G)$.
This show that (iii) implies (i).
On the other hand, if $\lambda$ has torsion, say $\lambda^m = 1$, then, as $\varphi_1(a^m) = m$, we see that $x= a^m$ is $\sigma$-regular w.r.t.\ $Z(G)$. It follows that (i) implies (iii).
\end{proof}

Using the above lemmas we can prove the following result, which completes \cite[Example~4.6]{BO} where only (i) implies (iii) was explained:
\begin{proposition}\label{BS-prop}
Let $n\geq 2$ and $\sigma \in Z^2(BS(n,n),\T)$.

Then the following are equivalent:
\begin{itemize}
\item[(i)] $(BS(n,n),\sigma)$ satisfies Kleppner's condition.
\item[(ii)] $(BS(n,n),\sigma)$ is $C^*$-simple.
\item[(iii)] $(BS(n,n),\sigma)$ has the unique trace property.
\end{itemize}
Hence, $BS(n,n)$ lies in $\K$.
\end{proposition}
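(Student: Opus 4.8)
The plan is to first normalize the cocycle. By Lemma~\ref{BS-1}, inflation along the homomorphism $\varphi\colon G\to\Z^2$ induces an isomorphism $H^2(\Z^2,\T)\to H^2(G,\T)$, and every class in $H^2(\Z^2,\T)$ is represented by a unique $\omega_\lambda$ with $\lambda\in\T$; hence every $\sigma\in Z^2(G,\T)$ is similar to $\op{Inf}\,\omega_\lambda$ for some $\lambda\in\T$. Since $C^*$-simplicity, the unique trace property and Kleppner's condition all depend only on the similarity class of $\sigma$, I may assume $\sigma=\op{Inf}\,\omega_\lambda$. The implications (ii)~$\Rightarrow$~(i) and (iii)~$\Rightarrow$~(i) hold automatically, as Kleppner's condition is necessary both for $C^*$-simplicity and for the unique trace property; so the real task is to prove (i)~$\Rightarrow$~(ii) and (i)~$\Rightarrow$~(iii), and for this it suffices to show that (i) forces $(G,\sigma)$ to be $C^*$-simple \emph{with} the unique trace property.

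So I would assume (i). By Lemma~\ref{BS-2}, $\lambda$ is nontorsion and the triple $(G,Z(G),\sigma)$ satisfies the relative Kleppner condition. Set $H=Z(G)=\langle b^n\rangle\simeq\Z$ and $K=G/H\simeq\Z*\Z_n$. Since $\varphi(b^{jn})=(0,jn)$ and $\omega_\lambda(r,s)=\lambda^{r_2s_1}$, the restriction $\sigma'$ of $\sigma$ to $H\times H$ is the trivial cocycle, so $C^*_r(H,\sigma')\simeq C^*_r(\Z)\simeq C(\T)$ via $\lambda_{\sigma'}(b^n)\leftrightarrow z$, and under this identification $\tau'$ becomes normalized Haar measure. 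The next step is to identify the twisted $K$-action on $C^*_r(H,\sigma')$. Using \eqref{t-sigma} together with $\sigma=\op{Inf}\,\omega_\lambda$, a routine computation gives $\widetilde\sigma(g,b^n)=\lambda^{-n\varphi_1(g)}$ for every $g\in G$, where $\varphi_1\colon G\to\Z$ is the homomorphism with $\varphi_1(a)=1$ and $\varphi_1(b)=0$; hence $\gamma_g\big(\lambda_{\sigma'}(b^n)\big)=\lambda^{-n\varphi_1(g)}\,\lambda_{\sigma'}(b^n)$. Transported to $C(\T)$, each $\gamma_g$ is thus the automorphism induced by the rotation $z\mapsto\lambda^{-n\varphi_1(g)}z$. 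Since $\varphi_1$ is surjective onto $\Z$ and $\lambda^n$ is nontorsion, $K$ acts on $C(\T)$ through a dense subgroup of rotations; consequently every orbit in $\T$ is dense, so $K$ acts on $C^*_r(H,\sigma')$ in a minimal way, and normalized Haar measure is the only $K$-invariant state on $C(\T)$, i.e.\ $\tau'$ is the unique $K$-invariant tracial state of $C^*_r(H,\sigma')$.

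With these facts in place, all hypotheses of Corollary~\ref{sut-ab} are satisfied for this choice of $H$: $(G,H,\sigma)$ satisfies the relative Kleppner condition, $C^*_r(H,\sigma')$ is commutative, $K$ acts minimally on it, and $\tau'$ is its unique $K$-invariant tracial state. Hence $(G,\sigma)$ is $C^*$-simple with the unique trace property, which establishes (i)~$\Rightarrow$~(ii) and (i)~$\Rightarrow$~(iii) and completes the equivalence. (Alternatively, one could invoke Proposition~\ref{BryKed}, using that $K=\Z*\Z_n$ is itself $C^*$-simple, hence has the unique trace property.) Finally, every $\sigma\in Z^2(G,\T)$ either satisfies Kleppner's condition---and is then $C^*$-simple with the unique trace property---or fails it, so $C^*S(G)=UT(G)=K(G)$, that is, $BS(n,n)$ lies in $\K$. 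I expect the only mildly technical points to be the evaluation of $\widetilde\sigma(g,b^n)$ from the inflated cocycle and the fact that a dense group of rotations of $\T$ acts minimally and uniquely ergodically; both are routine once Lemmas~\ref{BS-1} and~\ref{BS-2} are available, so I anticipate no serious obstacle.
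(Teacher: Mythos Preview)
Your proposal is correct and follows essentially the same route as the paper's proof: reduce to $\sigma=\op{Inf}\,\omega_\lambda$ via Lemma~\ref{BS-1}, take $H=Z(G)$, use Lemma~\ref{BS-2} to get the relative Kleppner condition, identify the $K$-action on $C^*_r(H,\sigma')\simeq C(\T)$ as rotations by powers of $\lambda^n$, and then apply Corollary~\ref{sut-ab}. The only minor deviation is that the paper obtains uniqueness of the $K$-invariant trace by quoting \cite[Proposition~4.3]{BO}, whereas you deduce it directly from unique ergodicity of a dense rotation group on $\T$; both arguments are valid and yield the same conclusion.
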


\begin{proof}
Using Lemma~\ref{BS-1} we can assume that $\sigma=\op{Inf}\omega_\lambda$ for some $\lambda \in \T$.
To prove that (i)~$\Rightarrow$~(ii) and (i)~$\Rightarrow$~(iii), we will appeal to Corollary~\ref{sut-ab} with $G=BS(n,n)$, $H=Z(G)\simeq \Z$, and $K=G/H = \Z*\Z_n$.
As a section $s\colon K\to G$ for the quotient map $G\to K=\Z*\Z_n\simeq\langle u,v \mid v^n\rangle$, we choose the obvious map $s$ sending a word in $u$ and $v$ to the corresponding word in $a$ and $b$.
Assume that $(G,\sigma)$ satisfies Kleppner's condition.
Since 
\[
\sigma(b^{cn}, b^{dn}) = \omega_\lambda\big((0,b^{cn}), \, (0, b^{dn})\big) = \lambda^0 = 1
\]
for all $c, d \in \Z$, we have $\sigma' = 1$, so $C_r^*(H, \sigma') = C_r^*(\Z)$ is commutative.
Moreover, Lemma~\ref{BS-2} gives that $(G,H,\sigma)$ satisfies the relative Kleppner condition and it follows from \cite[Proposition~4.3]{BO} that $\tau'$ is the only $K$-invariant tracial state on $C_r^*(H, \sigma')$.
So to apply Corollary~\ref{sut-ab} and obtain that $(G,\sigma)$ is $C^*$-simple with the unique trace property, it only remains to show that $K$ acts on $C_r^*(H,\sigma') \simeq C_r^*(\Z)$ in a minimal way.
One easily computes that the action $\beta$ of $K$ is untwisted and satisfies that
\[
\beta_k(\lambda_{\sigma'}(b^{cn})) = \overline{\lambda}^{\,cn\,\varphi_1(s(k))}\, \lambda_{\sigma'}(b^{cn})
\]
for all $k\in K$ and $c\in \Z$, where $\varphi_1$ is defined as in the proof of Lemma~\ref{BS-2}.
Identifying $C_r^*(H,\sigma') \simeq C_r^*(\Z)$ with $C(\T)$ via Gelfand's transform, we get that each $\beta_k$ is the $*$-automorphism of $C(\T)$ induced by the homeomorphism of $\T$ given by
\[
\phi_k (z) = \lambda^{\,n\varphi_1(s(k))}\,z
\]
for all $z\in \T$.
Since $\varphi_1(s(u^m)) =\varphi_1(a^m) = m$ for every $m \in \Z$, and $\lambda$ is nontorsion (using Lemma~\ref{BS-2}), we see that the orbit $\{\phi_k(z) : k\in K\} $ is dense in $\T$ for every $z \in \T$, so the action of $K$ on $C^*_r(H,\sigma')$ is minimal, as desired.

Since both (ii)~$\Rightarrow$~(i) and (iii)~$\Rightarrow$~(i) always hold, the proof is finished.
\end{proof}

Finally, we can now deduce that $BS(m,n)$ belongs to $\mathcal{K}$ if $\lvert m\rvert,\lvert n\rvert\geq 2$.
Indeed, when $\lvert m\rvert,\lvert n\rvert\geq 2$ and $\lvert m\rvert\neq\lvert n\rvert$,
the group $BS(m,n)$ is $C^*$-simple by \cite[Theorem~4.10]{Iva},
and if $m=-n$ then $BS(m,n)$ is not ICC and has no $2$-cocycles.

\begin{appendix}
\section{On reduced twisted group \texorpdfstring{C$^*$}{C*}-algebras with stable rank one}\label{appendix}

Let $G$ be a discrete group and $\sigma \in Z^2(G, \T)$. We set $\delta = \delta_e$ and let $\lVert \cdot\rVert_2$ denote the usual norm in $\ell^2(G)$. For $a \in \mathcal{B}(\ell^2(G))$ we set
\[
r_2(a) = \limsup_{n\to\infty} \lVert a^n\delta \rVert_2^{1/n}\,.
\]
Since $\lVert b\,\delta \rVert_2 \leq \lVert b\rVert$ for every $b \in \mathcal{B}(\ell^2(G))$, we have $r_2(a) \leq r(a) \leq \lVert a\rVert < \infty $, where $r(a)$ denotes the usual spectral radius of $a \in \mathcal{B}(\ell^2(G))$.

We recall some definitions from \cite{DH}. We let $C_c(G)$ denote the space of all complex-valued functions on $G$ having finite support.
A finite subset $S$ of $G$ is said to have the \emph{$\ell^2$-spectral radius property} if, for every $f\in C_c(G)$ with $\op{supp}(f)\subset S$, we have
\begin{equation}\label{sr2}
r_2\big(\Lambda(f)\big) = r\big(\Lambda(f)\big)\,.
\end{equation}
The group $G$ is said to have the \emph{$\ell^2$-spectral radius property} if every finite subset of $G$ has the $\ell^2$-spectral radius property, that is, if \eqref{sr2} holds for every $f\in C_c(G)$.

\medskip
Dykema and de~la~Harpe show in \cite[Theorem~1.4]{DH} that $C_r^*(G)$ has stable rank one whenever $G$ satisfies the following condition:

\medskip
\emph{\(DH\) \quad For every finite subset $F$ of $G$, there exists $g\in G$ such that $gF$ is semifree
\(i.e., the subsemigroup generated by $gF$ in $G$ is free over $gF$\) and $gF$ has the $\ell^2$-spectral radius property.}

\medskip
The group $G$ is said to have \emph{the free semigroup property} if for every finite subset $F$ of $G$, there exists $g\in G$ such that $gF$ is semifree. An immediate corollary is that $C_r^*(G)$ has stable rank one whenever $G$ is a group having both the free semigroup property and the $\ell^2$-spectral radius property. We will show below that a similar result hold in the twisted case.

\medskip
It will be convenient to introduce some more terminology. We first note that if $f\in C_c(G)$, then $\Lambda_\sigma(f) \delta = f$, so we have
\[
\lVert f\rVert_2 \leq \lVert\Lambda_\sigma(f)\rVert\,.
\]
A finite subset $S$ of $G$ will be said to have \emph{the SR-property w.r.t.\ $\sigma$} if for every $f\in C_c(G)$ with $\op{supp}(f)\subset S$, we have
\[
r\big(\Lambda_\sigma(f)\big) \leq \lVert f\rVert_2\,.
\]
In the case where $\sigma=1$, we just say that $S$ has \emph{the SR-property}.

\begin{theorem}\label{st-rank}
Consider the following conditions:
\begin{itemize} 
\item[(i)] $G$ has the $\ell^2$-spectral radius property and the free semigroup property.
\item[(ii)] $G$ satisfies condition \(DH\).
\item[(iii)] For every finite subset $F$ of $G$, there exists $g\in G$ such that $gF$ has the SR-property.
\item[(iv)] For every finite subset $F$ of $G$, there exists $g\in G$ such that $gF$ has the SR-property w.r.t.\ $\sigma$.
\item[(v)] $C_r^*(G, \sigma)$ has stable rank one.
\end{itemize}
Then we have \textup{(i)}~$\Rightarrow$~\textup{(ii)}~$\Rightarrow$~\textup{(iii)}~$\Rightarrow$~\textup{(iv)}~$\Rightarrow$~\textup{(v)}.
\end{theorem}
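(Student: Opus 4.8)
The plan is to prove the four implications in turn; the first three are soft and amount to careful bookkeeping with the cocycle, while the fourth is the substantial one. \textbf{(i) $\Rightarrow$ (ii).} Given a finite $F \subseteq G$, the free semigroup property yields $g\in G$ with $gF$ semifree, and the $\ell^2$-spectral radius property guarantees that the finite set $gF$ has the $\ell^2$-spectral radius property; this is precisely condition (DH). \textbf{(ii) $\Rightarrow$ (iii).} First I would show that a finite set $S$ which is both semifree and has the $\ell^2$-spectral radius property has the SR-property: if $f \in C_c(G)$ with $\operatorname{supp}(f) \subseteq S$, then, since the subsemigroup generated by $S$ is free over $S$, for each $n$ every element in the support of the $n$-fold convolution of $f$ with itself admits a \emph{unique} factorization into $n$ elements of $S$, so no cancellation occurs and $\|\Lambda(f)^n\delta\|_2 = \|f\|_2^{\,n}$; hence $r_2(\Lambda(f)) = \|f\|_2$, and the $\ell^2$-spectral radius property forces $r(\Lambda(f)) = r_2(\Lambda(f)) = \|f\|_2 \le \|f\|_2$. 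Applying this to the set $gF$ produced by (DH) gives (iii).

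\textbf{(iii) $\Rightarrow$ (iv).} The key observation is that $\sigma$ enters only through unimodular scalars. Regarding operators on $\ell^2(G)$ as matrices indexed by $G$, one computes $[\Lambda_\sigma(f)]_{h,k} = f(hk^{-1})\,\sigma(hk^{-1},k)$, so $\bigl|[\Lambda_\sigma(f)]_{h,k}\bigr| = [\Lambda(|f|)]_{h,k}$, where $|f|$ is the function $x \mapsto |f(x)|$ and $\Lambda$ refers to the untwisted case. By the triangle inequality for matrix products, the modulus of each entry of $\Lambda_\sigma(f)^n$ is then dominated by the corresponding nonnegative entry of $\Lambda(|f|)^n$, and since the operator norm on $\ell^2(G)$ is monotone under such entrywise domination of moduli (as is elementary), $\|\Lambda_\sigma(f)^n\| \le \|\Lambda(|f|)^n\|$ for every $n$; taking $n$-th roots yields $r(\Lambda_\sigma(f)) \le r(\Lambda(|f|))$. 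Hence, if $\operatorname{supp}(f)$ lies in a set $S$ with the SR-property, then, as $|f|$ is also supported on $S$ with $\||f|\|_2 = \|f\|_2$, we obtain $r(\Lambda_\sigma(f)) \le r(\Lambda(|f|)) \le \|f\|_2$. Thus every set with the SR-property has the SR-property w.r.t.\ $\sigma$, and (iv) follows from (iii).

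\textbf{(iv) $\Rightarrow$ (v).} This is the substantial implication, and the strategy is to adapt the proof of \cite[Theorem~1.4]{DH}. Since the elements $\Lambda_\sigma(f)$, $f \in C_c(G)$, are norm-dense in $A = C_r^*(G,\sigma)$, it suffices to approximate in norm each such $a = \Lambda_\sigma(f)$ by invertible elements of $A$. Given $a$ and $\varepsilon > 0$, condition (iv) supplies $g \in G$ such that $g\,\operatorname{supp}(f)$ has the SR-property w.r.t.\ $\sigma$; replacing $a$ by $\lambda_\sigma(g)a$ is harmless, since $\lambda_\sigma(g)$ is a unitary of $A$ and left multiplication by a unitary preserves the distance from an element to the set of invertibles of $A$, so we may assume that $\operatorname{supp}(f)$ itself has the SR-property w.r.t.\ $\sigma$, i.e.\ that $r(\Lambda_\sigma(h)) \le \|h\|_2$ for every $h \in C_c(G)$ supported on $\operatorname{supp}(f)$. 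The remainder then proceeds parallel to \cite{DH}: the only role played there by semifreeness and the $\ell^2$-spectral radius property is to secure precisely such SR-type estimates on the (translated) support of $a$, and passing from the trivial cocycle to $\sigma$ alters only unimodular coefficients, hence affects neither the relevant $\ell^2$-norms nor the spectral radii. I expect the genuine difficulty to lie exactly here: one must verify that DH's construction of an invertible $\varepsilon$-close to $a$ survives the presence of $\sigma$ unchanged, and in particular that each finite subset of $G$ on which an SR-type bound is invoked can be arranged to have the SR-property w.r.t.\ $\sigma$ by the single preliminary translation above. The remaining three implications, by contrast, require no idea beyond tracking the cocycle.
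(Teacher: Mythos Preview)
Your proof is correct and follows the paper's route; the only place worth sharpening is (iv)~$\Rightarrow$~(v), where your direct ``given $a$ and $\varepsilon>0$'' framing does not quite close by itself. After the translation you obtain $r(a)\le \lVert f\rVert_2$ and hence $d(a,\op{GL}(A))\le r(a)\le \lVert f\rVert_2$, but this only bounds the distance, it does not force it to vanish. The paper (following \cite{DH}) packages the argument as a contradiction: if $A$ fails to have stable rank one, then step three of \cite[Theorem~1.4]{DH} produces $f\in C_c(G)$ with $\lVert f\rVert_2 < d\big(\Lambda_\sigma(f),\op{GL}(A)\big)$; translating by the unitary $\lambda_\sigma(g)$ preserves both the $\ell^2$-norm and the distance to $\op{GL}(A)$, and then the general C$^*$-inequality $d(c,\op{GL}(A))\le r(c)$ (step one of \cite{DH}) combined with your SR-bound $r(c)\le \lVert f\rVert_2$ yields the contradiction. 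In particular, no further SR-type estimates on auxiliary finite sets are needed---your single preliminary translation, plus these two ingredients from \cite{DH}, is the whole argument, resolving the concern you flag at the end.

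For (iii)~$\Rightarrow$~(iv), your entrywise-domination argument is exactly the content of the paper's lemma (the paper phrases it as the inductive inequality $\lVert a^n\xi\rVert_2\le \lVert b^n\lvert\xi\rvert\rVert_2$ with $a=\Lambda_\sigma(f)$ and $b=\Lambda(\lvert f\rvert)$, which is the same computation); the remaining implications coincide with the paper's.
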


The following lemma will be useful in the proof of Theorem~\ref{st-rank}.

\begin{lemma}\label{norm-ineq}
Let $f \in C_c(G)$ and set $a= \Lambda_\sigma(f) \in C_r^*(G, \sigma), \, b=\Lambda(\lvert f\rvert) \in C_r^*(G)$.
Then $\lVert a^n\rVert \leq \lVert b^n\rVert$ for every $n\in \N$.
\end{lemma}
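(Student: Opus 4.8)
The plan is to realize $a^n$ as a twisted convolution operator and then dominate it, entrywise, by the corresponding untwisted operator. First I would record the multiplication rule for $\Lambda_\sigma$: for $f,h \in C_c(G)$ one has $\Lambda_\sigma(f)\,\Lambda_\sigma(h) = \Lambda_\sigma(f \ast_\sigma h)$, where $(f\ast_\sigma h)(g) = \sum_{k\in G} f(k)\,h(k^{-1}g)\,\sigma(k,k^{-1}g)$; this is immediate from $\lambda_\sigma(g)\lambda_\sigma(k)=\sigma(g,k)\lambda_\sigma(gk)$ together with a reindexing of the double sum. Iterating, $a^n = \Lambda_\sigma(f^{(n)})$, where $f^{(n)}$ is the $n$-fold twisted convolution of $f$ with itself; its value at $g\in G$ is a finite sum, over all $(g_1,\dots,g_n)$ with $g_1\cdots g_n = g$ and $g_i\in\op{supp}(f)$, of terms $f(g_1)\cdots f(g_n)$ each multiplied by a product of values of $\sigma$, hence of modulus $1$. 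The triangle inequality therefore yields the pointwise bound $\lvert f^{(n)}(g)\rvert \leq \big(\lvert f\rvert^{\ast n}\big)(g)$ for every $g\in G$, where $\lvert f\rvert^{\ast n}$ is the $n$-fold \emph{ordinary} convolution of $\lvert f\rvert$; and since $\Lambda$ is a homomorphism of the convolution algebra $\big(\ell^1(G),\ast\big)$ we have $b^n = \Lambda(\lvert f\rvert)^n = \Lambda\big(\lvert f\rvert^{\ast n}\big)$.

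The remaining ingredient is a general domination principle: if $h_1 \in C_c(G)$ and $h_2 \in C_c(G)$ satisfy $h_2 \geq 0$ and $\lvert h_1\rvert \leq h_2$ pointwise, then $\lVert \Lambda_\sigma(h_1)\rVert \leq \lVert \Lambda(h_2)\rVert$. To prove this I would estimate matrix coefficients: for $\xi,\eta\in\ell^2(G)$,
\[
\bigl\lvert\langle \Lambda_\sigma(h_1)\xi,\eta\rangle\bigr\rvert \;\leq\; \sum_{g,h\in G} \lvert h_1(g)\rvert\,\lvert\xi(h)\rvert\,\lvert\eta(gh)\rvert \;\leq\; \sum_{g,h\in G} h_2(g)\,\lvert\xi(h)\rvert\,\lvert\eta(gh)\rvert \;=\; \bigl\langle \Lambda(h_2)\lvert\xi\rvert,\lvert\eta\rvert\bigr\rangle \;\leq\; \lVert \Lambda(h_2)\rVert\,\lVert\xi\rVert_2\,\lVert\eta\rVert_2,
\]
where $\lvert\xi\rvert,\lvert\eta\rvert$ denote the pointwise absolute values (which have the same $\ell^2$-norm as $\xi,\eta$) and the middle equality is just the definition of the untwisted convolution operator. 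Taking the supremum over unit vectors $\xi,\eta$ gives $\lVert\Lambda_\sigma(h_1)\rVert \leq \lVert\Lambda(h_2)\rVert$. Applying this with $h_1 = f^{(n)}$ and $h_2 = \lvert f\rvert^{\ast n}$, and combining with the first paragraph, yields $\lVert a^n\rVert = \lVert \Lambda_\sigma(f^{(n)})\rVert \leq \lVert \Lambda\big(\lvert f\rvert^{\ast n}\big)\rVert = \lVert b^n\rVert$, which is the assertion.

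I do not expect any serious obstacle: the lemma is essentially the statement that passing from $\sigma$ to the trivial cocycle and from $f$ to $\lvert f\rvert$ only increases operator norms of convolution powers. The one point that needs to be handled carefully is the bookkeeping of the modulus-one cocycle factors when iterating $\ast_\sigma$, so that the inequality $\lvert f^{(n)}\rvert \leq \lvert f\rvert^{\ast n}$ is a clean consequence of the triangle inequality and is in no way cocycle-dependent; once that is in place, the domination principle for $\ell^2$-convolution operators finishes the proof.
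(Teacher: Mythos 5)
Your proof is correct, and it reaches the lemma by a slightly different route than the paper. The paper never writes down the $n$-fold twisted convolution kernel $f^{(n)}$; instead it proves, by induction on $n$, the vector-level inequality $\lVert a^n\xi\rVert_2\leq\lVert b^n\,\lvert\xi\rvert\rVert_2$ for all $\xi\in\ell^2(G)$, with base case $\lvert a\xi\rvert=\lvert f\ast_\sigma\xi\rvert\leq\lvert f\rvert\ast\lvert\xi\rvert=b\,\lvert\xi\rvert$ and induction step using that $b^n$ maps nonnegative vectors to nonnegative vectors monotonically, so $b^n\lvert a\xi\rvert\leq b^{n+1}\lvert\xi\rvert$; the operator-norm bound then follows at once. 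You instead collapse $a^n$ into a single operator $\Lambda_\sigma(f^{(n)})$, dominate the kernel pointwise by $\lvert f\rvert^{\ast n}$, and invoke a general domination principle ($\lvert h_1\rvert\leq h_2$ pointwise implies $\lVert\Lambda_\sigma(h_1)\rVert\leq\lVert\Lambda(h_2)\rVert$) proved via matrix coefficients. Both arguments rest on the same mechanism --- the cocycle values have modulus one, so the triangle inequality erases the twist --- and both are elementary and complete. What your version buys is a clean, reusable comparison lemma for twisted versus untwisted convolution operators stated independently of the power structure; what the paper's version buys is that one never has to track the combinatorics of the iterated twisted convolution, since the induction is carried entirely by positivity of $b$ on $\ell^2(G)_+$. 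Your bookkeeping of the cocycle factors in $f^{(n)}$ is exactly the point that needs care, and you have handled it correctly.
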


\begin{proof}
We first prove by induction on $n$ that for each $n\in \N$, we have
\begin{equation}\label{twist-ineq}
\lVert a^n\, \xi\rVert_2 \, \leq \lVert\, b^n \, \lvert\xi\rvert \,\rVert_2 \quad \text{for every} \, \, \xi \in \ell^2(G)\,.
\end{equation}
Let $\xi \in \ell^2(G)$. Since
\[
\lvert a \, \xi\rvert = \lvert f \ast_\sigma \xi\rvert \leq \lvert f\rvert \ast \lvert\xi\rvert = b \, \lvert\xi\rvert \,,
\]
we have $\lVert a\, \xi\rVert_2 = \lVert\,\rvert a\, \xi\lvert\,\rVert_2 \, \leq \lVert\, b \, \lvert\xi\rvert \,\rVert_2$, i.e., \eqref{twist-ineq} holds when $n=1$.

\smallskip
Now, assume that \eqref{twist-ineq} holds for some $n\in \N$. Then, for $\xi \in \ell^2(G)$, we get
\[
\lVert a^{n+1}\, \xi\rVert_2 = \lVert a^n\, a\, \xi\rVert_2 \,\leq \lVert \,b^n \, \lvert a\, \xi\rvert\,\rVert_2 \,\leq \, \lVert b^{n+1}\, \lvert \xi\rvert\,\rVert_2\,,
\]
where we have used the induction hypothesis at the second step and the fact that $0\leq b^n \, \lvert a\, \xi\rvert\leq b^n\, b\, \lvert\xi\rvert = b^{n+1} \, \lvert\xi\rvert$ at the third step. This shows that \eqref{twist-ineq} holds for $n+1$, as desired.

From \eqref{twist-ineq}, we get
\[
\lVert a^n\, \xi\rVert_2 \, \leq \lVert\, b^n \, \lvert\xi\rvert \,\rVert_2 \,\leq \, \lVert b^n \rVert \, \lVert\,\lvert\xi\rvert \,\rVert_2 = \lVert b^n \rVert \, \lVert\xi \,\rVert_2
\]
for every $\, \xi \in \ell^2(G)$, and the assertion clearly follows.
\end{proof}
\begin{proof}[Proof of Theorem~\ref{st-rank}]
As already pointed out, (i)~$\Rightarrow$~(ii) is immediate from the definitions.
Next, let $S$ be a finite subset $S$ of $G$. Recall that if $S$ is semifree, then we have $r_2(\Lambda(f)) = \lVert f\rVert_{2}$ for any $f \in C_c(G)$ (cf.\ step two in the proof of Theorem~1.4 in \cite{DH}).
Hence, if $S$ is semifree and has the $\ell^2$-spectral radius property, then we have $r(\Lambda(f)) = r_2(\Lambda(f)) = \lVert f\rVert_{2}$ for every $f\in C_c(G)$ with $\op{supp}(f)\subset S$, so $S$ has the SR-property.
This shows that (ii)~$\Rightarrow$~(iii).

Now, let $S$ be a finite subset of $G$ such that $S$ has the SR-property.
To show that (iii)~$\Rightarrow$~(iv) holds, it suffices to show that $S$ has the SR-property w.r.t.\ $\sigma$.
So consider $f\in C_c(G)$ with $\op{supp}(f)\subset S$ and set $a = \Lambda_\sigma(f)$.
We have to show that $r(a) \leq \lVert f\rVert_2$.
Set $b= \Lambda(\lvert f\rvert) \in C_r^*(G)$.
Since $\op{supp}(\lvert f\rvert)=\op{supp}(f)\subset S$ and $S$ has the SR-property, we get that $r(b) \leq \lVert\,\lvert f\rvert\,\rVert_{2}= \lVert f\rVert_2$.
Thus, we see that it is enough to show that $r(a) \leq r(b)$.
Using the spectral radius formula, this immediately follows from Lemma~\ref{norm-ineq}.

The proof of (iv)~$\Rightarrow$~(v) is an adaptation of the proof of \cite[Theorem~1.4]{DH} (which itself builds upon ideas from \cite{DHR}). For the sake of completeness, we sketch the argument. Assume that (iv) holds and suppose (for contradiction) that $A:=C_r^*(G,\sigma)$ does not have stable rank one. Proceeding as in step three of the proof of \cite[Theorem~1.4]{DH}, we get that there exists some $f \in C_c(G)$ such that
\[
\lVert f\rVert_2\, < \, d\big(\Lambda_\sigma(f), \op{GL}(A)\big)\, ,
\]
where $d\big(x, \op{GL}(A)\big)$ denotes the distance (w.r.t.\ operator norm) from some $x \in A$ to the set of invertible elements in $A$.

Set $a=\Lambda_\sigma(f)$ and $F=\op{supp}(f)$. By assumption, there exists $g \in G$ such that $gF$ has the SR-property w.r.t.\ $\sigma$.
Set $c = \lambda_\sigma(g)\, a \in A$. Clearly, $d\big(c, \, \op{GL}(A)\big) = d\big(a, \, \op{GL}(A)\big)$.
Moreover, since $c= \Lambda_\sigma(f_{g})$, where
\[
f_{g}:= \sum_{h \in F} \,f(h)\sigma(g,h) \,\delta_{gh}\,,
\]
we get that $\lVert f_{g}\rVert_2 = \lVert f\rVert_{2}$ and $\op{supp}(f_{g}) = gF$. Hence, since $gF$ has the SR-property w.r.t.\ $\sigma$, we get that
\[
r(c) \leq \lVert f_{g}\rVert_{2} = \lVert f\rVert_2\,.
\]
We also have $d\big(c, \, \op{GL}(A)\big) \leq \, r(c)$ (as this inequality holds in every unital C$^*$-algebra, cf.\ step one in the proof of \cite[Theorem~1.4]{DH}). Thus, altogether, we get
\[
\lVert f\rVert_{2}\, < \, d\big(a, \, \op{GL}(A)\big) = d\big(c, \, \op{GL}(A)\big) \leq \, r(c) \,\leq \lVert f\rVert_{2} \,,
\]
which gives a contradiction. So $A$ must have stable rank one, that is, (v) holds.
\end{proof}

\begin{remark}
Several examples of groups having both the free semigroup property and the $\ell^2$-spectral radius property are exhibited in \cite{DH}.
If $G$ denotes any of these groups, then Theorem~\ref{st-rank} gives that $C_r^*(G, \sigma)$ has stable rank one for any $\sigma \in Z^2(G,\T)$.
In all these examples, it is known that $G$ is $C^*$-simple (being a Powers group), hence that $(G,\sigma)$ is also $C^*$-simple.
This provides some evidence that it might be true that $C_r^*(G,\sigma)$ has stable rank one whenever $(G,\sigma)$ is $C^*$-simple (cf.~Question~\ref{simp-sr1}).
\end{remark}

\section{On groups with property~\(BP\)}\label{BP}
 We recall from \cite{TD} that 
a group $G$ is said to have \emph{property~\(BP\)} if for every $g \in G \setminus \{e\}$ and $n \in \N, \,n\geq 2$,
there exist $\, g_1, \ldots, g_n \in G$, a subgroup $H$ of $G$, and pairwise disjoint nonempty subsets $T_1,\ldots , T_n \subset H$ such that
\[
g_j \, g \, g_j^{-1}\,\big(H\setminus T_j\big) \,\subset \,T_j
\]
for all $j =1, \ldots, n$.

In \cite[Remark~5.9]{TD}, Tucker-Drob sketches briefly how some arguments of Bekka, Cowling and de la Harpe in \cite{BCH} can be adapted to prove that $G$ has the unique trace property whenever $G$ has property~\(BP\). 
With the kind permission of Tucker-Drob, we give below an expanded version of his proof in the twisted case.

\begin{proposition}
Assume that $G$ has property~\(BP\) and let $\sigma \in Z^2(G,\T)$.
Then $(G,\sigma)$ has the unique trace property.
Moreover, $G$ is ICC and belongs to $\K_{UT}$.
\end{proposition}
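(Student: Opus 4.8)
The plan is to run the averaging argument of Bekka, Cowling and de la Harpe \cite{BCH} indicated by Tucker-Drob. I would show that for every $g\in G\setminus\{e\}$ and every $n\ge 2$ there is an average of unitary conjugates of $\lambda_\sigma(g)$ of norm at most $2n^{-1/2}$; since a tracial state $\varphi$ on $C^*_r(G,\sigma)$ takes the same value on all unitary conjugates of a fixed element, this forces $\varphi(\lambda_\sigma(g))=0$ for every $g\neq e$, whence $\varphi=\tau_\sigma$ because $\lambda_\sigma(G)$ spans a dense subspace of $C^*_r(G,\sigma)$. (Equivalently, $0$ belongs to the norm-closed convex hull of $\{u\lambda_\sigma(g)u^*:u\in\mathcal U(C^*_r(G,\sigma))\}$, a Powers-type property.)

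Fix $g\ne e$ and $n\ge 2$ and apply property~(BP): one obtains $g_1,\dots,g_n\in G$, a subgroup $H\le G$, and pairwise disjoint nonempty $T_1,\dots,T_n\subset H$ with $h_j(H\setminus T_j)\subset T_j$, where $h_j:=g_jgg_j^{-1}$. As $n\ge 2$, each $T_j$ is a proper subset of $H$, so $h_j\in H$. Setting $u_j=\lambda_\sigma(g_j)$ and using \eqref{sigma-conjugate}, $u_j\lambda_\sigma(g)u_j^*=\alpha_j\lambda_\sigma(h_j)$ with $\alpha_j:=\widetilde\sigma(g_j,g)\in\T$; thus the relevant average is $\frac1n\sum_j\alpha_j\lambda_\sigma(h_j)$, and it suffices to prove $\bigl\|\sum_{j=1}^n\alpha_j\lambda_\sigma(h_j)\bigr\|\le 2\sqrt n$.

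This norm estimate is the heart of the matter. Choosing a right transversal $R$ of $H$ in $G$ and putting $\widetilde T_j:=T_jR\subset G$, one checks that the $\widetilde T_j$ are pairwise disjoint and that $h_j(G\setminus\widetilde T_j)\subset\widetilde T_j$ (equivalently, decompose $\ell^2(G)$ over the right $H$-cosets, each of which is invariant under every $\lambda_\sigma(h_j)$, and argue coset by coset). For a unit vector $\xi\in\ell^2(G)$, I would write $\sum_j\alpha_j\lambda_\sigma(h_j)\xi=A+B$ with $A=\sum_j\alpha_j\lambda_\sigma(h_j)(\xi|_{\widetilde T_j})$ and $B=\sum_j\alpha_j\lambda_\sigma(h_j)(\xi|_{G\setminus\widetilde T_j})$. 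The summands of $B$ are supported in the pairwise disjoint sets $h_j(G\setminus\widetilde T_j)\subset\widetilde T_j$ and $\lambda_\sigma(h_j)$ is isometric, so $\|B\|_2^2=\sum_j\|\xi|_{G\setminus\widetilde T_j}\|_2^2\le n$; while $\|A\|_2\le\sum_j\|\xi|_{\widetilde T_j}\|_2\le\sqrt n\,\bigl(\sum_j\|\xi|_{\widetilde T_j}\|_2^2\bigr)^{1/2}\le\sqrt n$ by Cauchy--Schwarz and disjointness of the $\widetilde T_j$. Hence $\|\sum_j\alpha_j\lambda_\sigma(h_j)\xi\|_2\le 2\sqrt n$. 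The main obstacle is purely organizational: setting up the $\widetilde T_j$ correctly and checking the inclusion $h_j(G\setminus\widetilde T_j)\subset\widetilde T_j$.

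Finally, $G$ is ICC: for $g\ne e$ and $n\ge 2$ as above, no value occurs three times among $h_1,\dots,h_n$, since $h_i=h_j=h_k$ would force $H\setminus(T_i\cup T_j)=\emptyset$ (because $h_i$ acts injectively by left translation and $h_i(H\setminus(T_i\cup T_j))\subset T_i\cap T_j=\emptyset$), hence $T_j=H\setminus T_i=T_k$, contradicting disjointness; so $g$ has at least $\lceil n/2\rceil$ conjugates, and $n$ being arbitrary, its conjugacy class is infinite. (Alternatively, having the unique trace property, $G$ has trivial amenable radical by \cite{BKKO}, and $FC(G)\subset AR(G)$.) Since $G$ is ICC, $K(G)=Z^2(G,\T)$; as we have shown $UT(G)=Z^2(G,\T)$ and $UT(G)\subset K(G)$ always holds, we conclude $UT(G)=K(G)$, i.e.\ $G\in\K_{UT}$.
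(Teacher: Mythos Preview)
Your argument is correct and follows the same Bekka--Cowling--de~la~Harpe averaging strategy as the paper, arriving at the identical bound $\bigl\|\frac1n\sum_j\alpha_j\lambda_\sigma(h_j)\bigr\|\le 2n^{-1/2}$. The one technical difference is in how the norm estimate is carried out: the paper works in $\ell^2(H)$ via the isometric embedding $C_r^*(H,\sigma')\hookrightarrow C_r^*(G,\sigma)$ and estimates the bilinear form $\lvert\langle\lambda_{\sigma'}(r_j)\xi,\eta\rangle\rvert$ using the projections $P_{T_j}$, while you work directly in $\ell^2(G)$ by thickening each $T_j$ to $\widetilde T_j=T_jR$ via a right transversal and splitting $\sum_j\alpha_j\lambda_\sigma(h_j)\xi$ as $A+B$. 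Your route avoids invoking the embedding result, at the modest cost of checking that the inclusion $h_j(G\setminus\widetilde T_j)\subset\widetilde T_j$ survives the thickening (which it does, since $h_j\in H$ acts on each right coset $Hr$ exactly as on $H$). You also supply a direct combinatorial proof that $G$ is ICC, whereas the paper derives this implicitly from the fact that $(G,1)$ has the unique trace property and hence satisfies Kleppner's condition.
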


\begin{proof}
Let $\psi$ be a tracial state on $A:=C_r^*(G,\sigma)$. To show the first assertion, by continuity of $\psi$ and density of the $*$-subalgebra of $A$ generated by $\lambda_\sigma(G)$, it suffices to show that $\psi\big(\lambda_\sigma(g)\big) = 0$ for all $g\in G\setminus \{e\}$.
\medskip 
Fix $g\in G\setminus\{e\}$ and let $n\in \N,\, n\geq 2$. Pick $g_1, \ldots, g_n$, $H$ and $T_1,\ldots , T_n$ as in the definition of property~\(BP\), and set
\[
a_n = \frac{1}{n} \, \sum_{j=1}^n \, \lambda_\sigma(g_j) \, \lambda_\sigma(g)\, \lambda_\sigma(g_j)^* =
\frac{1}{n}\, \sum_{j=1}^n \, \widetilde{\sigma}(g_j, g) \, \lambda_\sigma(g_j \,g\, g_j^{-1})\,,
\]
where $\widetilde{\sigma}$ is defined as in \eqref{t-sigma}. We will show that
\begin{equation} \label{ineqBP}
\big\lVert\,a_n\,\big\rVert \, \leq \, \frac{2}{\sqrt{n}}\,.
\end{equation}
Using this inequality and the traciality of $\psi$, we then obtain that
\[
\big\lvert\psi\big(\lambda_\sigma(g)\big)\big\rvert = \big\lvert \psi(a_n) \big\rvert \leq \big\lVert a_n\big \rVert \, \leq \, \frac{2}{\sqrt{n}}\,.
\]
Letting $n\to \infty$, we get $\psi\big(\lambda_\sigma(g)\big) = 0 $, as desired.

To show that \eqref{ineqBP} holds, we set $r_j = g_j\, g \,g_j^{-1}$ for each $j= 1, \ldots , n$. The assumption says that $r_j (H\setminus T_j) \subset T_j$ for each $j=1,\ldots, n$. Since $H\setminus T_j \neq \emptyset$ for each $j$ (otherwise the $T_j$'s could not be pairwise disjoint), we see that each $r_j$ belongs to $H$.

Let $\sigma'$ denote the restriction of $\sigma$ to $H\times H$. If $D \subset H$, we let $P_D$ denote the orthogonal projection from $\ell^2(H)$ onto $\ell^2(D)$ (identified as a closed subspace of $\ell^2(H)$). We then have $\lambda_{\sigma'}(h) \, P_D = P_{\,hD}\, \lambda_{\sigma'}(h)$ for all $h \in H$. Note also that, since $r_j (H\setminus T_j) \cap (H\setminus T_j) = \emptyset$, we have $P_{r_j(H\setminus T_j)} \, P_{H\setminus T_j} = 0$ for each $j=1,\ldots, n$.

\smallskip
Set $a'_n= \frac{1}{n}\, \sum_{j=1}^n \, \widetilde{\sigma}(g_j, g) \, \lambda_{\sigma'}(r_j) \in C_r^*(H, \sigma')\,.$
To estimate $\lVert a'_n \rVert$, let $\xi, \eta \in \ell^2(H)$.
Using the triangle inequality, the remarks above and the Cauchy-Schwarz inequality, we get
\begin{align*}
\big\lvert \langle \widetilde{\sigma}(g_j, g)\,\lambda_{\sigma'}(r_j) \, \xi, \, \eta\rangle\big\rvert
& = \, \big\lvert \langle \lambda_{\sigma'}(r_j) \, \xi, \, \eta\rangle\big\rvert \\
& \leq \, \big\lvert \langle \,\lambda_{\sigma'}(r_j) \,P_{T_j}\, \xi, \, \eta\rangle\big\rvert
+ \big\lvert \langle \lambda_{\sigma'}(r_j) \, P_{H\setminus T_j}\,\xi, \, \eta\rangle\big\rvert \\
& \leq \, \big\lvert \langle \lambda_{\sigma'}(r_j) \,P_{T_j}\, \xi, \, \eta\rangle\big\rvert
+ \big\lvert \langle P_{r_j(H\setminus T_j)}\, \lambda_{\sigma'}(r_j) \,\xi, \, \eta\rangle\big\rvert \\
& = \, \big\lvert \langle \lambda_{\sigma'}(r_j) \,P_{T_j}\, \xi, \, \eta\rangle\big\rvert
+ \big\lvert \langle P_{r_j(H\setminus T_j)}\, \lambda_{\sigma'}(r_j) \,\xi, \,P_{T_j} \eta\rangle\big\rvert \\
& \leq \,\lVert P_{T_j} \, \xi \rVert\, \lVert\eta\rVert + \, \lVert\xi\rVert\, \lVert P_{T_j} \, \eta\lVert
\end{align*}
for each $j=1, \ldots, n$. Since the $T_j$'s are pairwise disjoint, this gives
\begin{gather*}
\sum_{j=1}^n \, \big\lvert \langle \widetilde{\sigma}(g_j, g)\, \lambda_{\sigma'}(r_j) \, \xi, \, \eta\rangle\big\rvert \, \leq \, \Big( \lVert\eta\rVert \, \sum_{j=1}^{n}\,\lVert P_{T_j} \, \xi \rVert + \lVert\xi\rVert\, \sum_{j=1}^n\lVert P_{T_j} \, \eta\rVert\Big)
\\
\leq \, \Big( \sqrt{n}\, \lVert\eta\rVert \, \Big(\sum_{j=1}^{n}\,\lVert P_{T_j} \, \xi \rVert^2\Big)^{1/2} + \sqrt{n}\, \lVert\xi\rVert \, \Big(\sum_{j=1}^{n}\,\lVert P_{T_j} \, \eta \rVert^2\Big)^{1/2}\,\Big) \, \leq \, 2\sqrt{n}\, \lVert\xi\rVert\, \lVert\eta\rVert\,.
\end{gather*}
Thus we get 
\[
\big\lvert \big\langle a'_n\, \xi, \, \eta\big\rangle\big\rvert \,\leq\, \frac{1}{n}\, \sum_{j=1}^n \, \Big\lvert \Big\langle \widetilde{\sigma}(g_j, g)\,\lambda_{\sigma'}(r_j) \, \xi, \, \eta\Big\rangle\Big\rvert \, \leq \, \frac{1}{n} \, 2\sqrt{n}\, \lVert\xi\rVert\, \lVert\eta\rVert\, =\, \frac{2}{\sqrt{n}}\, \lVert\xi\rVert\, \lVert\eta\rVert\,.
\]
It follows that $\, \lVert a_n\rVert=\lVert a'_n\rVert \, \leq \, 2/\sqrt{n}$, that is, \eqref{ineqBP} holds, and the proof of the first assertion is finished. Since this assertion is true for any $\sigma \in Z^2(G,\T)$, the second assertion follows readily.
\end{proof}

\section{On decay properties and uniqueness of the trace} \label{decay}
A recent result of Gong says that if a group $G$ has Jolissaint's property RD \cite{Jolissaint} (with respect to some length function $L$), and every nontrivial conjugacy class of $G$ has superpolynomial growth (w.r.t.~$L$), then $G$ has the unique trace property (see \cite[Theorem~3.11]{Gong1}). We give below a generalized version of her result.

Consider $\kappa\colon G\to [1,\infty)$. For $\xi\colon G\to \C$, set $\lVert \xi\rVert_{2,\kappa}=\Big(\sum_{g\in G} \lvert\xi(g) \kappa(g)\rvert^2\Big)^{1/2} \in [0,\infty]$.\\
Let $\sigma \in Z^2(G,\T)$. We will say that $(G, \sigma)$ is \emph{$\kappa$-decaying} if there exists some $M>0$ such that
\[
\lVert\Lambda_\sigma(f)\rVert \leq M\, \lVert f\rVert_{2,\kappa}
\]
for every function $f\colon G\to \C$ having finite support. It is easy to see that this definition agrees with the one given in \cite{BC}. 
When $(G,1)$ is $\kappa$-decaying, we will just say that $G$ is $\kappa$-decaying. We note that if $L$ is a length function on $G$, then $G$ has \emph{property RD} (w.r.t.~$L$) in the sense of \cite{Jolissaint} if and only if there exists some $s>0$ such that $G$ is $(1+L)^s$-decaying.

According to \cite[Theorem~3.5 and Proposition~3.7]{BC}, we have:
\begin{itemize}
\item[(i)] if $G$ is $\kappa$-decaying, then $(G,\sigma)$ is $\kappa$-decaying;
\item[(ii)] if $(G,\sigma)$ is $\kappa$-decaying, then the series $\sum_{g\in G} \xi(g)\lambda_\sigma(g)$ is convergent w.r.t.\ the operator-norm in $C_r^*(G,\sigma)$ whenever $\lVert\xi\rVert_{2,\kappa} < \infty$.
\end{itemize}

Assume now that $\kappa\colon G\to [1,\infty)$ is proper (so $G$ is countable). Let $C$ be a subset of $G$. For each $k\in \N$, set $C_k= C \cap \{g\in G : k-1< \kappa(g) \leq k\}$. 
 We will say that $C$ has \emph{superpolynomial growth} (w.r.t.~$\kappa$) if for every (real) polynomial $P$ there exists an infinite subset $K$ of $\N$ such that $\lvert C_k\rvert > P(k)$ for all $k\in K$.

\begin{theorem}
Assume that $\kappa\colon G\to [1,\infty)$ is a proper map, $(G,\sigma)$ is $\kappa$-decaying and every nontrivial $\sigma$-regular conjugacy class in $G$ has superpolynomial growth (w.r.t.~$\kappa$). Then $(G, \sigma)$ has the unique trace property.
\end{theorem}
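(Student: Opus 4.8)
The plan is to prove that the canonical tracial state $\tau$ is the only tracial state on $A:=C_r^*(G,\sigma)$. So I would let $\psi$ be an arbitrary tracial state on $A$. Since the linear span of $\lambda_\sigma(G)$ is a norm-dense $*$-subalgebra of $A$ and $\psi$ is continuous, it suffices to show that $\psi\big(\lambda_\sigma(g)\big)=0$ for every $g\in G\setminus\{e\}$; because $\tau$ satisfies the same identity, this forces $\psi=\tau$. Fixing $g\neq e$, I would split into two cases according to whether $g$ is $\sigma$-regular.

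The first case, where $g$ is \emph{not} $\sigma$-regular, is immediate: pick $h\in G$ with $hg=gh$ and $\sigma(h,g)\neq\sigma(g,h)$. Then $h\cdot g=g$, so $\widetilde\sigma(h,g)=\sigma(h,g)\,\overline{\sigma(g,h)}\neq 1$ by \eqref{t-sigma}, while \eqref{sigma-conjugate} gives $\lambda_\sigma(h)\lambda_\sigma(g)\lambda_\sigma(h)^*=\widetilde\sigma(h,g)\,\lambda_\sigma(g)$. Applying $\psi$ and using traciality yields $\psi\big(\lambda_\sigma(g)\big)=\widetilde\sigma(h,g)\,\psi\big(\lambda_\sigma(g)\big)$, hence $\psi\big(\lambda_\sigma(g)\big)=0$.

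In the second case $g$ is $\sigma$-regular, so its conjugacy class $C$ is a nontrivial $\sigma$-regular class (as $g\neq e$) and therefore has superpolynomial growth by hypothesis. Here I would run a twisted version of Gong's averaging argument. For $k\in\N$, put $B_k=\{x\in G:\kappa(x)\leq k\}$, which is finite since $\kappa$ is proper; set $n_k=\lvert C\cap B_k\rvert$, enumerate the (distinct) elements of $C\cap B_k$ as $c_1,\dots,c_{n_k}$, and for each $i$ choose $h_i\in G$ with $h_igh_i^{-1}=c_i$. Then
\[
a_k:=\frac1{n_k}\sum_{i=1}^{n_k}\lambda_\sigma(h_i)\,\lambda_\sigma(g)\,\lambda_\sigma(h_i)^*=\frac1{n_k}\sum_{i=1}^{n_k}\widetilde\sigma(h_i,g)\,\lambda_\sigma(c_i)=\Lambda_\sigma(f_k),
\]
where $f_k=\frac1{n_k}\sum_i\widetilde\sigma(h_i,g)\,\delta_{c_i}$ is supported in $C\cap B_k$ with $\lvert f_k(c_i)\rvert=1/n_k$. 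Traciality gives $\psi\big(\lambda_\sigma(g)\big)=\psi(a_k)$, so $\big\lvert\psi(\lambda_\sigma(g))\big\rvert\leq\lVert a_k\rVert$. Since $\kappa\leq k$ on $\op{supp}(f_k)$, we have $\lVert f_k\rVert_{2,\kappa}^2\leq n_k\,(1/n_k)^2\,k^2=k^2/n_k$, and the $\kappa$-decay of $(G,\sigma)$ then gives $\lVert a_k\rVert=\lVert\Lambda_\sigma(f_k)\rVert\leq M\,k/\sqrt{n_k}$. Finally, applying the superpolynomial growth of $C$ with the polynomial $P(k)=k^4$ produces an infinite $K\subset\N$ with $n_k\geq\lvert C_k\rvert>k^4$ for $k\in K$, whence $\lVert a_k\rVert<M/k\to 0$ along $K$ and $\psi\big(\lambda_\sigma(g)\big)=0$, as needed.

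The step I expect to be the main obstacle is matching the two hypotheses correctly in the last estimate: applying $\kappa$-decay to $f_k$ contributes a factor $k$ coming from the bound $\kappa\leq k$ on the ball $B_k$, so one must take the growth budget $P$ to be a polynomial that beats $k^2$ (any fixed power above $2$ works) in order to guarantee $k/\sqrt{n_k}\to 0$. Everything else is routine: the density reduction, the $\sigma$-regularity dichotomy, the identification $a_k=\Lambda_\sigma(f_k)$, and the elementary computation of $\lVert f_k\rVert_{2,\kappa}$. The resulting argument is the natural twisted counterpart of Gong's proof of \cite[Theorem~3.11]{Gong1}, with the role of property RD played by $\kappa$-decay and with $\sigma$-regularity incorporated into both cases.
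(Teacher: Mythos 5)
Your proof is correct, and its second case takes a recognizably different route from the paper's. The paper adapts Gong's argument directly: it fixes an increasing sequence $k_1<k_2<\cdots$ with $c_j:=\lvert C_{k_j}\rvert>(k_j)^{4j}$, builds a single \emph{infinitely supported} function $\xi$ with $\lvert\xi\rvert=c_j^{-5/8}$ on the annulus $C_{k_j}$ (twisted by phases $\overline{\gamma_h}$ chosen so that $\xi(h)\,\omega(\lambda_\sigma(h))=c_j^{-5/8}\,\omega(\lambda_\sigma(g))$), checks $\lVert\xi\rVert_{2,\kappa}<\infty$, invokes the fact that $\kappa$-decay makes $\sum_h\xi(h)\lambda_\sigma(h)$ norm-convergent, and then derives a contradiction from the divergence of $\sum_j c_j^{3/8}$ unless $\omega(\lambda_\sigma(g))=0$. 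You instead form the finite Dixmier-type averages $a_k=\Lambda_\sigma(f_k)$ over the conjugates in the ball $B_k$ and show directly that $\lVert a_k\rVert\leq Mk/\sqrt{n_k}\to 0$ along the infinite set furnished by superpolynomial growth with $P(k)=k^4$ (using $n_k\geq\lvert C_k\rvert$ since the annulus sits inside the ball). Your version is slightly more elementary: it only uses the defining inequality of $\kappa$-decay on finitely supported functions, and avoids the norm-convergence result for $\ell^2_\kappa$-summable series cited from~\cite{BC}; it is closer in spirit to the averaging argument of Appendix~\ref{BP}. What the paper's formulation buys is a closer parallel to Gong's original proof and an illustration of the series-convergence consequence of decay. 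Both treatments of the non-$\sigma$-regular case coincide, and your bookkeeping (the factor $k$ from $\kappa\leq k$ on $B_k$ versus the growth exponent $4$) is sound.
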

\begin{proof}
A major part of the proof is an adaptation of the proof of \cite[Lemma~3.9]{Gong1}. Let $\omega$ be a tracial state on $C_r^*(G,\sigma)$. It suffices to show that $\omega(\lambda_\sigma(g)) = 0$ for every $g\in G\setminus\{e\}$. Assume first that $g\in G$ is not $\sigma$-regular. Let then $h\in G$ be such that $h$ commutes with $g$ and $\sigma(h,g)\neq \sigma(g,h)$. We then have $\widetilde\sigma(h,g) \neq 1$ and
\[
\omega(\lambda_\sigma(g)) = \omega\big(\lambda_\sigma(h)\lambda_\sigma(g)\lambda_\sigma(h)^*\big) = \widetilde\sigma(h,g)\,\omega(\lambda_\sigma(g)),
\]
so it follows that $\omega(\lambda_\sigma(g)) =0$.

Next, assume that $g\in G\setminus \{e\} $ is $\sigma$-regular. Set $C=C_G(g)$ (the conjugacy class of $g$ in $G$) and let $C_k$ be defined as above for each $k\in \N$. Since $C$ has superpolynomial growth (w.r.t.~$\kappa$), we can find an increasing sequence $1 < k_1 < k_2 < \cdots$ in $\N$ such that $c_j:=\lvert C_{k_j}\rvert > (k_j)^{4j}$ for every $j\in \N$. 

Using equation \eqref{sigma-conjugate} we get that for each $h\in C$, there exists some $\gamma_h \in \T$ such that $\omega(\lambda_\sigma(h)) = \gamma_h \, \omega(\lambda_\sigma(g))$. Define then $\xi\colon G\to \C$ by
\[
\xi(h) = \begin{cases} \overline{\gamma_h} \, c_{j}^{-5/8} & \text{if }  h \in C_{k_j} \text{ for some } j \in \N,\\
\hspace{3ex} 0 & \text{otherwise.}\end{cases}
\] 
Then 
\begin{align*}
\lVert \xi\rVert_{2,\kappa}^2 &= \sum_{j\in \N} \,\sum_{h \in C_{k_j}} \,\big\lvert\overline{\gamma_h}\, c_{j}^{-5/8}\, \kappa(h)\big\rvert^2
\,\leq\, \sum_{j\in \N} \,\sum_{h \in C_{k_j}} \,c_{j}^{-10/8}\, k_j^{\,2}\\
&= \sum_{j\in \N} \, \,c_{j}^{-1/4}\, k_j^{\,2}
\,\leq \, \sum_{j\in \N} \, \,k_j^{-j}\, k_j^{\,2} \,< \,\infty\,.
\end{align*}
Since $(G,\sigma)$ is $\kappa$-decaying, we get that $\sum_{h\in G} \xi(h)\,\lambda_\sigma(h)$ converges in operator-norm to some $x \in C_r^*(G,\sigma)$. Thus, by continuity of $\omega$, we get
\[
\sum_{h \in G} \xi(h)\, \omega\big(\lambda_\sigma(h)\big)= \omega(x).
\]
But 
\begin{align*}\xi(h) \,\omega(\lambda_\sigma(h)\big)&= 
\begin{cases} \overline{\gamma_h} \, c_{j}^{-5/8}\,\gamma_h \, \omega(\lambda_\sigma(g)) & \text{if }  h \in C_{k_j} \text{ for some } j \in \N,\\
\hspace{10ex} 0 & \text{otherwise.}\end{cases} \\ &= \begin{cases} c_{j}^{-5/8}\,\omega(\lambda_\sigma(g)) & \text{if }  h \in C_{k_j} \text{ for some } j \in \N,\\
\hspace{10ex} 0 & \text{otherwise.}\end{cases} 
\end{align*} 
Hence
\[
\sum_{h \in G} \xi(h) \omega(\lambda_\sigma(h)) =\omega(\lambda_\sigma(g))\,\sum_{j\in \N}\sum_{h\in C_{k_j}} c_j^{-5/8}= \omega(\lambda_\sigma(g))\,\sum_{j\in \N} c_j^{3/8}\,.
\]
As $\sum_{j\in \N} c_j^{3/8} =\infty$, we see that we must have $\omega(\lambda_\sigma(g)) =0$. 

Thus, altogether, we have shown that $\omega(\lambda_\sigma(g)) =0$ for all $g\in G\setminus\{e\}$, which implies that $\omega$ agrees with the canonical tracial state on $C_r^*(G,\sigma)$.
\end{proof}

\end{appendix}

\section*{Acknowledgements}

We thank the referee for many helpful suggestions.
The second author is funded by the Research Council of Norway through FRINATEK, project no.~240913. Part of the work was done when he was affiliated with Arizona State University in Tempe, and in particular during the visit of the first author to ASU in April 2015. We are both very grateful to the operator algebra group at ASU for their kind hospitality.

\bibliographystyle{plain}

\end{document}